\title{Conditional lower bounds on the distribution of central values: the case of modular forms}
\author{}
\date{\today}
\author[Lesesvre]{Didier Lesesvre}
\address{Universit\'e de Lille -- Laboratoire Paul Painlev\'e, UMR 8524,
 59000 Lille, France}
\email{didier.lesesvre@univ-lille.fr}
\author[Suriajaya]{Ade Irma Suriajaya}
\address{Faculty of Mathematics, Kyushu University, 744 Motooka, Nishi-ku, Fukuoka 819-0395, Japan}
\email{adeirmasuriajaya@math.kyushu-u.ac.jp}
\newtheorem{prop}{Proposition}
\newtheorem{thm}{Theorem}
\newtheorem{lem}{Lemma}
\newtheorem{coro}{Corollary}
\newtheorem{conj}{Conjecture}
\theoremstyle{remark}
\newtheorem{rmk}{Remark}
\newcounter{daggerfootnote}
\newcommand*{\daggerfootnote}[1]{%
    \setcounter{daggerfootnote}{\value{footnote}}%
    \renewcommand*{\thefootnote}{\fnsymbol{footnote}}%
    \footnote[2]{#1}%
    \setcounter{footnote}{\value{daggerfootnote}}%
    \renewcommand*{\thefootnote}{\arabic{footnote}}%
    }
\renewcommand{\Re}{\mathrm{Re}}
\renewcommand{\Im}{\mathrm{Im}}
\renewcommand{\ge}{\geqslant}
\renewcommand{\le}{\leqslant}
\numberwithin{equation}{section}
\newcommand{\sumh}{\sideset{}{^h}\sum}
\begin{document}

\begin{abstract}
Radziwi{\l}{\l} and Soundararajan unveiled a connection between low-lying zeros and central values of $L$-functions, which they instantiated in the case of quadratic twists of an elliptic curve. This paper addresses the case of the family of modular forms in the level aspect, and proves that the logarithms of central values of associated $L$-functions approximately distribute along a normal law with mean $-\tfrac12 \log \log c(f)$ and variance $\log\log{c(f)}$, where $c(f)$ is the analytic conductor of~$f$, as predicted by the Keating-Snaith conjecture.
\end{abstract}

\maketitle

\section{Introduction}
\label{sec:intro}

In practice, $L$-functions appear as generating functions encapsulating information about various objects, such as Galois representations, elliptic curves, arithmetic functions, modular forms, Maass forms, etc. 
Studying $L$-functions is therefore of utmost importance in number theory at large. Two of their attached data carry critical information: their zeros, which govern the distributional behavior of underlying objects; and their central values, which are related to invariants such as the class number of a field extension. We refer to \cite{is} and references therein for further hindsight.

\subsection{Distribution of zeros}

The spacings of zeros of families of $L$-functions are well-understood: they are distributed according to a universal law, independent of the exact family under consideration, as proven by Rudnick and Sarnak \cite{rs95}. This recovers the behavior of spacings between eigenangles of the classical groups of random matrices.
However the distribution of \textit{low-lying} zeros, i.e. those located near the real axis, attached to reasonable families of $L$-functions does depend upon the specific setting under consideration. See~\cite{sst} for a discussion in a general setting.

More precisely, let $L(s, f)$ be an $L$-function attached to an arithmetic object $f$. Consider its nontrivial zeros written in the form $\rho_f = \frac{1}{2}+i\gamma_f$ where $\gamma_f$ is a priori a complex number. There is a notion of analytic conductor $c(f)$ of $f$ quantifying the number of zeros of $L(s,f)$ in a given region, more precisely such that the number of zeros with rela and imaginary parts between $0$ and $1$ is approximately $\log (c(f)) / 2\pi$;
we renormalize the mean spacing of the zeros to $1$ by setting $\tilde{\gamma}_f = {\log (c(f))} \gamma_f / {2\pi}$.
Let $h$ be an even Schwartz function on $\mathbb{R}$ whose Fourier transform is compactly supported, in particular it admits an analytic continuation to all $\mathbb{C}$. The one-level density attached to $f$ is defined by
\begin{equation}
\label{def:one-level-density}
D(f, h) := \sum_{\gamma_f} h\left(\tilde{\gamma}_f\right).
\end{equation}
The analogy with the behavior of small eigenangles of random matrices led Katz and Sarnak to formulate the so-called \emph{density conjecture}, claiming the same universality for the types of symmetry of families\daggerfootnote{The notion of suitable \textit{family} of automorphic forms may depend upon the setting and the required properties. Sarnak \cite{sarnak} provides a wide list of examples of families arising in practice and of operations allowing to derive new families from those. See also \cite{sst} for further discussions.} of $L$-functions as those arising for classical groups of random matrices.

\begin{conj}[Katz-Sarnak] 
\label{conj:katz-sarnak}
Let $\mathcal{F}$ be a family$^\dagger$ of $L$-functions and $\mathcal{F}_X$ a finite truncation increasing to $\mathcal{F}$ when $X$ grows. Then there is one classical group $G$ among $\mathrm{U}$, $\mathrm{SO(even)}$, $\mathrm{SO(odd)}$, $\mathrm{O}$ or $\mathrm{Sp}$ such that for all even Schwartz function $h(x)$ on $\mathbb{R}$ with compactly supported Fourier transform, 
\begin{equation}
\label{density-conjecture}
\frac{1}{|\mathcal{F}_X|} \sum_{f \in \mathcal{F}_X} D(f, h) \xrightarrow[X\to\infty]{} \int_{\mathbb{R}} h(x) W_G(x)dx, 
\end{equation}
where $W_G(x)$ is the explicit distribution function modeling the distribution of the eigenangles of the corresponding group of random matrices, explicitly $W_{\rm U}(x) = 1$ and
\begin{equation}
    \begin{array}{rclcrcl}
         W_{\mathrm{O}}(x) & =  &\displaystyle 1 + \frac{1}{2}\delta_0(x), & \qquad &
W_{\mathrm{SO(even)}}(x) & =  &\displaystyle 1+ \frac{\sin 2\pi x}{2 \pi x}, \\[1em]
W_{\mathrm{SO(odd)}}(x) & = &\displaystyle  1 - \frac{\sin 2\pi x}{2\pi x} + \delta_0(x), & \qquad &
W_{\mathrm{Sp}}(x) & = &\displaystyle   1 - \frac{\sin 2\pi x}{2\pi x}.
    \end{array}
\end{equation}

The family $\mathcal{F}$ is then said to have the \emph{type of symmetry} of~$G$.
\end{conj}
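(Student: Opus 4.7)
The plan is to execute the standard explicit-formula strategy and, critically, to try to push it to test functions whose Fourier transforms have arbitrary compact support, which is where the real difficulty lies.

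\textbf{Step 1 (Explicit formula).} First, I would apply the Weil--Riemann explicit formula to each $L(s,f)$ in the family. Under the standard analytic hypotheses (analytic continuation, functional equation, Euler product with local roots $\alpha_{f,p},\beta_{f,p}$), the sum $D(f,h)$ rewrites as an archimedean main term coming from the gamma factors, minus a prime-power sum of the shape
\[
\frac{2}{\log c(f)} \sum_{p,k} \frac{(\alpha_{f,p}^k+\beta_{f,p}^k)\log p}{p^{k/2}}\, \hat h\!\left(\frac{k\log p}{\log c(f)}\right),
\]
plus lower-order terms that are uniformly $o(1)$ in $\mathcal{F}_X$. Because $\hat h$ has compact support in $[-\sigma,\sigma]$, only prime powers with $p^k\le c(f)^{\sigma}$ contribute.

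\textbf{Step 2 (Family averaging).} Next, I would average over $\mathcal{F}_X$ and exchange summations. The problem reduces to the asymptotic evaluation of
\[
\frac{1}{|\mathcal{F}_X|}\sum_{f\in \mathcal{F}_X}\bigl(\alpha_{f,p}^k+\beta_{f,p}^k\bigr)
\]
uniformly for $p^k\le c(f)^{\sigma}$. One expects a family-specific trace formula (Petersson/Kuznetsov for $\GL_2$, relative trace formulas in higher rank, or Plancherel-type formulas) to isolate a diagonal contribution whose integration against $\hat h$ produces exactly $\int h(x) W_G(x)\dd x$ for the correct symmetry group $G$, together with off-diagonal terms involving Kloosterman sums or their analogues that must be shown to be negligible. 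Separating newforms, discarding degenerate representations, and passing from arithmetic to spectral weights would be the technical layer needed to make this honest.

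\textbf{Step 3 (The real obstacle).} The hard part, and the reason the statement is stated as a conjecture rather than a theorem, is pushing the averaging in Step 2 beyond a threshold $\sigma<\sigma_0$ to arbitrary compactly supported $\hat h$. The off-diagonal contributions produced by the available trace formulas contain Kloosterman-type sums whose current bounds, even on average, yield sufficient cancellation only when $\sigma$ is bounded: for instance $\sigma<2$ in the treatment of holomorphic modular forms by Iwaniec--Luo--Sarnak, and smaller thresholds in most other families. For larger $\sigma$ the prime-power sum exceeds the available cancellation budget and the off-diagonal terms match, then swamp, the main term. Breaking this barrier would require genuinely sharper bounds on averages of Kloosterman sums, or a new input beyond the trace formula; no such input is presently known in any interesting family. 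Consequently, while the scheme above would establish the conjecture for all $\hat h$ supported in some fixed $(-\sigma_0,\sigma_0)$ depending on the family, I do not see how to reach the conjecture as literally worded for every compactly supported $\hat h$: this is the step I cannot complete, and it is where the main obstacle lies.
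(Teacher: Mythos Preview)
The statement you were asked to prove is a \emph{conjecture}; the paper does not prove it and does not attempt to. It is stated purely as background and motivation. So there is no ``paper's own proof'' to compare your attempt against.

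That said, your write-up is an accurate and honest account of the situation. You correctly identify the explicit-formula/trace-formula strategy as the standard route, and you correctly locate the genuine obstruction: for any given family the off-diagonal contributions can only be controlled when $\hat h$ is supported in some bounded interval $(-\sigma_0,\sigma_0)$, and no known method extends this to arbitrary compact support. This is exactly the state of affairs reflected in the paper, which quotes the restricted results of Iwaniec--Luo--Sarnak and Baluyot--Chandee--Li (support in $(-4,4)$ for the family of holomorphic newforms in the level aspect) as partial progress toward the conjecture. Your Step~3 is the correct diagnosis of why the statement remains conjectural, and there is nothing more to be done here.
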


Various results towards this conjecture have been established in the recent two decades, following the seminal paper of Iwaniec, Luo and Sarnak \cite{ils}; we refer to \cite{sst} for a general discussion and various references.

\subsection{Distribution of central values}

The distribution of central values of $L$-functions is also finely understood, and the Keating-Snaith conjecture predicts that the logarithmic central values $\log L(\tfrac12, f)$ are asymptotically distributed according to a normal distribution, with explicit mean and variance depending on the family.
 
\begin{conj}[Keating-Snaith] 
\label{conj:keating-snaith}
Let $\mathcal{F}$ be a reasonable family of $L$-functions in the sense of Sarnak, and $\mathcal{F}_X$ a finite truncation increasing to $\mathcal{F}$ when $X$ grows. 
There is a mean $M_{\mathcal{F}}$ and a variance $V_{\mathcal{F}}$ such that for any real numbers $\alpha < \beta$, 
\begin{equation}
\label{keating-snaith-conjecture}
\frac{1}{|\mathcal{F}_X|}\left| \left\{ f \in \mathcal{F}_X \ : \ \frac{\log L(\tfrac12, f) - M_{\mathcal{F}}}{V_{\mathcal{F}}^{1/2}} \in (\alpha, \beta) \right\} \right| \xrightarrow[X\to\infty]{} \frac{1}{\sqrt{2\pi}} \int_\alpha^\beta e^{-x^2/2} dx.
\end{equation}
In particular, the family of the logarithmic central values $\log L(\tfrac12, f)$ equidistributes asymptotically with respect to a normal distribution.
\end{conj}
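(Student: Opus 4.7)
The plan is to prove Conjecture~\ref{conj:keating-snaith} for the family of modular forms in the level aspect by adapting the Radziwi\l\l--Soundararajan strategy: reduce the distribution of $\log L(\tfrac12, f)$ to that of a short Dirichlet polynomial over primes, handle the reduction via an averaged control of the low-lying zeros of $L(s,f)$, and compute moments of the resulting polynomial using the Petersson trace formula. The Gaussian shape will then follow from the method of moments, in the spirit of Selberg's central limit theorem for $\log \zeta(\tfrac12 + it)$, transposed from the $t$-aspect to the family aspect.

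Concretely, applying the explicit formula to a smooth test function localized near the central point gives, for a truncation parameter $X$ a small power of $\log c(f)$, a decomposition
\begin{equation*}
\log L(\tfrac12, f) \;=\; P_X(f) \,+\, E_X(f), \qquad P_X(f) := \sum_{p \le X} \frac{\lambda_f(p)}{\sqrt{p}},
\end{equation*}
up to a bounded term absorbing the prime powers $p^k$ with $k \ge 2$. The error $E_X(f)$ depends on the zeros of $L(s,f)$ within a window of size about $1/\log X$ around $\tfrac12$, and is controlled on average over $\mathcal{F}_X$ using the one-level density against a test function localized at the origin, as provided by a strong enough form of Conjecture~\ref{conj:katz-sarnak}. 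On the complementary side, the Petersson trace formula allows one to compute the harmonic moments $\sumh_{f \in \mathcal{F}_X} P_X(f)^k$ for $k$ as large as a small constant times $\log c(f)/\log\log c(f)$: multiplicativity of the Hecke eigenvalues reduces the diagonal contribution to a sum over primes whose leading asymptotic is the $k$-th moment of a Gaussian with mean $-\tfrac12 \log\log c(f)$ and variance $\log\log c(f)$, while the off-diagonal Kloosterman terms remain negligible throughout this range. Carleman's uniqueness theorem then closes the argument.

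The main obstacle is the control of $E_X(f)$ on average, since it demands the density conjecture with test-function support strictly wider than what is unconditionally known for the family of modular forms in the level aspect: the support achieved by Iwaniec--Luo--Sarnak falls short of what is needed to cover the whole zero-window relevant to the central value. As a result, the argument is intrinsically conditional on an enhancement of Conjecture~\ref{conj:katz-sarnak}. A secondary, more technical difficulty lies in the combinatorial bookkeeping for step~(ii), ensuring that the contribution of primes with $\lambda_f(p^2) = \lambda_f(p)^2 - 1$ and of the near-diagonal terms conspire to produce the exact Gaussian mean $-\tfrac12 \log\log c(f)$ predicted by the orthogonal symmetry type of the family. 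Removing the conditional hypothesis would correspond to genuinely new information on the distribution of zeros of $L(s,f)$ close to the central point and appears to be the true bottleneck of the problem.
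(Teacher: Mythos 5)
This statement is a conjecture, not a theorem of the paper: no proof is given or claimed, and the paper's actual contribution (Theorem~\ref{thm:thm}) is a one-sided lower bound towards it with the explicit constant $\tfrac58$. Your plan is essentially the strategy the paper uses to prove that lower bound, so let me compare on that footing.

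The overall decomposition $\log L(\tfrac12, f) = P(f,x) + (\text{sum over zeros})$, the moment computation via Hecke multiplicativity and Petersson (plus Kuznetsov), and the emergence of the mean $-\tfrac12\log\log c(f)$ from $\lambda_f(p)^2 = \lambda_f(p^2) + 1$, are all correct and match the paper. But there are two substantive gaps in your account.

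First, you misidentify where the limitation of the one-level density result enters. You write that the argument ``is intrinsically conditional on an enhancement of Conjecture~\ref{conj:katz-sarnak}'' because the available test-function support is too narrow. That is not how the Radziwi\l\l--Soundararajan machinery (nor this paper) uses the one-level density. Instead, one chooses a nonnegative test function $h$ with $\widehat h$ supported in the allowed range (here $(-4,4)$ via Iwaniec--Luo--Sarnak and Baluyot--Chandee--Li) and runs an \emph{amplification} argument: because $h\ge 0$ and $h(0)=1$, each form with a small zero contributes at least $2-\varepsilon$ to $D(f,h)$, so bounding $\sum_f D(f,h)\Psi$ controls the proportion of forms with a small zero. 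The width of the support does not make the argument conditional; it degrades the constant. With $\widehat h$ supported in $(-4,4)$ and the optimal Fej\'er kernel one finds $\int W_{\mathrm O} h = \tfrac34$, and the amplification inequality then yields that at least a proportion $\tfrac58$ of forms have no very small zero --- exactly the constant in Theorem~\ref{thm:thm}. Narrower support simply gives a worse constant; it never produces the full limit in \eqref{keating-snaith-conjecture}. Conversely, to recover the full conjecture with constant $1$ you would need to control the sum over small zeros \emph{without loss}, which really would require qualitatively new input, not just wider support.

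Second, a smaller but real point: you appeal to Carleman's uniqueness theorem after computing moments up to order $\asymp \log c(f)/\log\log c(f)$. The paper never needs moments growing with the conductor. It fixes $\ell$, lets $Q\to\infty$ in Theorem~\ref{prop:moments}, and then passes from matching moments to a distributional statement by approximating the indicator $\mathbf 1_{(\alpha,\beta)}$ in $L^1$ by a polynomial (Corollary~\ref{coro:moments}). This is weaker and far easier than a Carleman argument, and it is all that is needed to produce the lower bound. Pushing to uniformly growing moments would demand much stronger control of the off-diagonal Kloosterman terms than is established, and is unnecessary once you accept that the output is a lower bound rather than the full limit law.

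In short: your reduction and moment scheme are essentially the paper's, but you have mislocated the role of the one-level density (it is fed into a positivity-based amplification that produces the factor $\tfrac58$, not a conditional hypothesis on top of GRH for $\mathrm{sym}^2 f$), and the moment method should be used in its fixed-order form rather than via Carleman.
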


\begin{rmk}
    In the above statement, in the case $L(\tfrac12, f)=0$ its logarithm is understood as $-\infty$. Conjecturally, the central value is always non-negative, as it can be seen assuming the generalized Riemann hypothesis and using a continuity argument on the real line. In certain cases, positivity can be obtained directly by applying Waldspurger's formula, but it remains unknown in general. 
\end{rmk}

\subsection{Relation between both conjectures}

Radziwi\l\l{} and Soundararajan \cite{rs} claimed a general principle that any restricted result towards Conjecture \ref{conj:katz-sarnak} can be refined to show that most such $L$-values have the typical distribution predicted by Conjecture~\ref{conj:keating-snaith}. They instantiated this technique in the case of quadratic twists of a given elliptic curve and suggested the wide applicability of this approach, in particular in the case of modular forms building on the pioneering work of Iwaniec, Luo and Sarnak \cite{ils}. This paper shows that this principle indeed holds and provides the proof in the case of modular forms in the level aspect.

More precisely, for integers $k \geqslant 2$ and $q \geqslant 1$, let $H_k(q)$ be an orthogonal basis of Hecke eigenforms, which is a basis of the space of newforms $S_k^{\rm new}(q)$, normalized so that their first Fourier coefficients are $a_f(1) = 1$. We let $c(f) = k^2q$ be the analytic conductor \cite{is} of $f$. Introduce for a general sequence $(a_f)_{f \in H_k(q)}$ the harmonic average
\begin{equation}
\label{harmonic-weights}
\sumh_{f \in H_k(q)} a_f := \frac{\Gamma(k-1)}{(4\pi)^{k-1}} \sum_{f \in H_k(q)} \frac{a_f}{\|f\|^2}
\end{equation}
which includes the suitable weights in order to apply the Petersson trace formula. In this setting, the seminal work of Iwaniec, Luo and Sarnak \cite{ils} as well as the recent achievement of Baluyot, Chandee and Li \cite{bcl} obtain the following restricted statement towards Conjecture \ref{conj:katz-sarnak}.
\begin{thm}[Iwaniec, Luo, Sarnak \& Baluyot, Chandee, Li]
For any smooth function $\Psi$ compactly supported and any Schwartz function $h$ such that its Fourier transform $\widehat{h}$ is supported in $(-4, 4)$, we have 
\begin{equation}
\label{thm:ilsbcl}
\frac{1}{N(Q)} \sum_{q \geqslant 1} \Psi\left( \frac{q}{Q}\right) \sumh_{f \in H_k(q)} D(f, h) \xrightarrow[Q \to \infty]{} \int_{\mathbb{R}} W_{\rm O} h = \widehat{h}(0) + \frac{1}{2}h(0), 
\end{equation}
where $W_{\rm O} = 1 + \tfrac12\delta_0$ is the orthogonal density and $N(Q)$ is the weighted cardinality of the family,
\begin{equation}
\label{smooth-cardinality}
N(Q) := \sum_{q \geqslant 1} \Psi\left( \frac{q}{Q}\right) \sumh_{f \in H_k(q)} 1.
\end{equation}
\end{thm}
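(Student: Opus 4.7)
The plan is to follow the Iwaniec--Luo--Sarnak strategy \cite{ils}, with the extension of the Fourier support to $(-4,4)$ being the technical contribution of Baluyot--Chandee--Li \cite{bcl}. Three steps carry the argument: explicit formula, Petersson averaging, and control of the off-diagonal.

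\emph{Explicit formula.} I would first apply the explicit formula to each $L(s,f)$ to write
\begin{equation*}
D(f,h) = \widehat{h}(0) + \frac{h(0)}{2} - \frac{2}{\log c(f)} \sum_p \frac{a_f(p)\log p}{\sqrt{p}} \widehat{h}\!\left(\frac{\log p}{\log c(f)}\right) - \frac{2}{\log c(f)} \sum_p \frac{a_f(p^2)\log p}{p} \widehat{h}\!\left(\frac{2\log p}{\log c(f)}\right) + o(1),
\end{equation*}
where the $\widehat{h}(0)$ piece comes from the archimedean Gamma factors, and the $\tfrac12 h(0)$ piece arises deterministically from the secondary $-\chi_0(p)$ correction in the identity $\alpha_p^2+\beta_p^2 = a_f(p^2)-\chi_0(p)$ (with $\alpha_p,\beta_p$ the Satake parameters), evaluated via the prime number theorem. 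The two remaining prime sums depend on $f$ and must be shown to average to zero.

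\emph{Petersson averaging.} I would then insert the averaging $\sum_q \Psi(q/Q) \sumh_{f \in H_k(q)}$ and apply the Petersson trace formula for newforms, obtained from the classical trace formula on $S_k(q)$ by the standard Möbius inversion on divisors of $q$. Schematically,
\begin{equation*}
\sumh_{f \in S_k(q)} a_f(m) a_f(n) = \delta_{m=n} + 2\pi i^{-k} \sum_{c \geqslant 1} \frac{S(m,n;cq)}{cq} J_{k-1}\!\left(\frac{4\pi\sqrt{mn}}{cq}\right).
\end{equation*}
Since neither $p$ nor $p^2$ equals $1$, the diagonal $\delta_{m=n}$ contributes nothing in either prime sum. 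The task therefore reduces to showing that the off-diagonal Kloosterman contribution is negligible after the full averaging.

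\emph{Off-diagonal estimates.} This is the main obstacle. For $\widehat{h}$ supported in $(-2,2)$ the effective primes satisfy $p \ll c(f)^2$, the Kloosterman modulus $cq$ dominates, and Weil's bound on $S(p,1;cq)$ combined with standard estimates on $J_{k-1}$ near zero suffices, as in \cite{ils}. Pushing the support to $(-4,4)$ means allowing primes up to $c(f)^4$, beyond the reach of these naive bounds. The remedy, following \cite{bcl}, is to open the Kloosterman sum, apply Poisson summation in the level~$q$ against $\Psi(q/Q)$ to exhibit a conductor drop, then invoke Kuznetsov's formula on $\Gamma_0(\ell)$ for the resulting dual moduli to convert the averaged Kloosterman sum into a spectral sum over Maass and holomorphic forms, and finally control this spectral sum via the spectral large sieve together with moment bounds for $\mathrm{GL}(2)$ Rankin--Selberg $L$-functions. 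This step is the principal technical difficulty of the proof.
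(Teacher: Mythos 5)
The paper does not prove this theorem at all: it is stated as an imported result, attributed to Iwaniec--Luo--Sarnak \cite{ils} (who reach Fourier support $(-2,2)$) and Baluyot--Chandee--Li \cite{bcl} (who extend to $(-4,4)$), and is used as a black box in the remainder of the argument. There is therefore no ``paper's own proof'' to compare against; what one can do is check your sketch against the cited works, and at the level of detail given it is a faithful outline of that strategy. The three steps you describe --- the Weil explicit formula reducing $D(f,h)$ to $\widehat{h}(0) + \tfrac12 h(0) + P^{(1)} + P^{(2)}$ (with the $\tfrac12 h(0)$ indeed coming from the $-1$ in $\alpha_p^2+\beta_p^2 = a_f(p^2)-1$ after a Mertens-type evaluation), the Petersson trace formula for newforms via M\"obius sieving against oldforms with a vanishing diagonal, and the handling of the Kloosterman off-diagonal --- match the architecture of \cite{ils}. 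Your description of the enlargement from support $(-2,2)$ to $(-4,4)$ via opening the Kloosterman sum, Poisson summation in the level variable against $\Psi(q/Q)$, converting the dual Kloosterman sums through the Kuznetsov trace formula into a spectral sum, and bounding that sum with the spectral large sieve and $\mathrm{GL}(2)$ moment estimates is the correct summary of the Baluyot--Chandee--Li contribution. This is, of course, a roadmap and not a proof: each of those stages (in particular the conductor-dropping analysis after Poisson and the treatment of the continuous spectrum) is a substantial technical undertaking in \cite{bcl}, and filling them in would essentially reproduce that paper. For the purposes of the present article, the statement is taken on faith with the citations given, so your proposal is consistent with what the paper relies on but proves more than the paper itself attempts.
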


Building on this result and exploiting the methodology outlined by Radziwi\l\l{} and Soundararajan, we prove the following statement towards Conjecture \ref{conj:keating-snaith}.
\begin{thm}
\label{thm:thm}
For any $q \geqslant 1$ and $k \geqslant 2$, let $H_k(q)$ be an orthogonal basis of Hecke eigenforms of level $q$, weight $k$ and trivial nebentypus, which is also a basis of the space of newforms~$S_k^{\rm new}(q)$, normalized so that their first Fourier coefficients are $a_f(1) = 1$. Assume the generalized Riemann hypothesis for the associated symmetric squares $L$-functions $L(s, \mathrm{sym}^2 f)$.

For any smooth function $\Psi$ compactly supported and for any real numbers $\alpha < \beta$, we have
\begin{equation*}
\label{thm-eq}
\frac{1}{N(Q)} \sum_{q \geqslant 1} \Psi\left( \frac{q}{Q}\right) \left| \left\{ f \in H_k(q) \ : \ \tfrac{\log L\left(\tfrac12,f\right) + \tfrac12\log\log c(f)}{\sqrt{\log \log c(f)}} \in (\alpha, \beta) \right\} \right| \geqslant \frac{5}{8} \frac{1}{\sqrt{2\pi}} \int_\alpha^\beta e^{-x^2/2} dx + o(1).
\end{equation*}
\end{thm}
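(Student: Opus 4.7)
The plan is to implement the Radziwi\l\l{}-Soundararajan methodology in the level aspect of the family of modular forms. The core idea is to approximate $\log L(\tfrac12, f)$ by a short Dirichlet polynomial over primes, establish its Gaussian distribution by moment computations via the Petersson trace formula, and absorb the discrepancy using the one-level density result~\eqref{thm:ilsbcl}.

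First, under GRH for $L(s, \mathrm{sym}^2 f)$, an explicit-formula argument \emph{\`a la} Selberg yields an expansion
\[
\log L(\tfrac12, f) = \sum_{p \leq X} \frac{a_f(p)}{\sqrt p} \, v\!\left(\tfrac{\log p}{\log X}\right) - \tfrac12 \log \log X + R(f; X),
\]
for a smooth cutoff $v$ and a parameter $X$ taken as a small power of $c(f)$. The shift $-\tfrac12\log\log X$ arises from the prime-square terms, by invoking the Hecke relation $a_f(p)^2 = a_f(p^2) + 1$ and the prime number theorem for $L(s, \mathrm{sym}^2 f)$, which is precisely where GRH for the symmetric square enters. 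The remainder $R(f; X)$ can be written as a sum over the nontrivial zeros of $L(s, f)$ against a kernel concentrated near the central point.

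Second, the Petersson trace formula evaluates the moments of $P(f; X) := \sum_{p\leq X} a_f(p) p^{-1/2} v(\log p / \log X)$ averaged via $\sumh_{f \in H_k(q)}$ and weighted by $\Psi(q/Q)$: the diagonal produces a variance asymptotic to $\log\log X \sim \log\log c(f)$ through the Hecke relations and PNT, while all higher moments match those of a centered Gaussian via combinatorial pairings, the off-diagonal Kloosterman contributions being absorbed by Weil's bound. The method of moments then delivers convergence in distribution of $P(f; X)/\sqrt{\log\log c(f)}$ to a standard normal under the averaging of the theorem.

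Third, the remainder $|R(f; X)|$ is small unless $L(s, f)$ has zeros abnormally close to the central point. Majorizing it by $\sum_{\gamma_f} h(\tilde\gamma_f)$ for a non-negative function $h$ dominating a bump around the origin and with $\widehat h$ supported in $(-4,4)$, the one-level density theorem~\eqref{thm:ilsbcl} bounds the proportion of exceptional $f$ by $\int h \cdot W_{\mathrm O}$, which is strictly positive---reflecting both the atomic mass $\tfrac12 \delta_0$ (associated with the odd-sign forms, for which $L(\tfrac12, f) = 0$) and the continuous repulsion near the origin. The main obstacle lies precisely here: quantifying the link between the remainder and a spectral functional of low-lying zeros, and optimizing the majorant $h$ under the rigid constraint that $\widehat h$ be supported in $(-4, 4)$. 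This optimization is what produces the explicit constant $\tfrac58$; any extension of the admissible Fourier support in the one-level density would push this constant toward $1$, thereby recovering the full Keating-Snaith distribution.
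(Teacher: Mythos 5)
Your outline captures the global architecture—explicit formula giving $\log L(\tfrac12,f) \approx P(f,x) - \tfrac12\log\log x$ plus a zero-sum remainder, Gaussian moments for $P(f,x)$ via trace formulas, and an amplification using the one-level density—but two essential ingredients of the paper's argument are missing or incorrectly described.

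First, saying the off-diagonal in the Petersson trace formula is ``absorbed by Weil's bound'' is too crude and is explicitly not what the paper does. After the Hecke multiplicativity step $a_f(p_1)\cdots a_f(p_\ell)=a_f(p_1\cdots p_\ell)$ the diagonal vanishes identically, and the whole game is to estimate the Kloosterman sum contribution. Because the products $p_1\cdots p_\ell$ can become comparable to or exceed $Q$, the Bessel function no longer decays and Weil's bound alone fails to produce the required cancellation. The paper first sieves the sum from newforms to all forms (Lemma~\ref{lem:sieve}), applies Petersson, dyadically decomposes the prime sum, and then \emph{reverses direction} by reading the resulting Kloosterman sum expression as the arithmetic side of a Kuznetsov trace formula (Proposition~\ref{prop:ktf}); the bound then comes from the spectral side (discrete, continuous, holomorphic contributions, Proposition~\ref{prop:bcl}) exactly as in Baluyot--Chandee--Li and its inductive extension. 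This is precisely the structural replacement for the Poisson summation available in the quadratic character setting of Radziwi\l\l--Soundararajan, and it is the technical core of Section~\ref{sec:only-ones}.

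Second, and more fundamentally, you invoke the one-level density theorem to control the remainder $R(f;X)$ as though it directly bounds the proportion of exceptional $f$ inside the slice $\{P(f,x)/\sqrt{\log\log x}\in(\alpha,\beta)\}$, but the one-level density alone only controls the \emph{unconditional} proportion of forms with a low zero. To run the amplification restricted to the slice, one needs the joint statistic
\begin{equation*}
\frac{1}{N(Q)}\sum_q \Psi\Bigl(\frac{q}{Q}\Bigr)\sumh_{f\in H_k(q)}P(f,x)^\ell\,D(f,h)
\;=\;\bigl(M_\ell+o(1)\bigr)(\log\log x)^{\ell/2}\int_{\mathbb{R}}W_{\mathrm O}h,
\end{equation*}
i.e.\ the \emph{uncorrelation lemma} (Corollary~\ref{prop:weighted-moments}) asserting that $P(f,x)$ and the low-lying zero functional $D(f,h)$ are asymptotically statistically independent over the family. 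Its proof requires decomposing $D(f,h)$ via the explicit formula, Cauchy--Schwarz to decouple $P(f,x)^{2\ell}$ from $P^{(1)}(f,h)^2$, and a second-moment bound for $P^{(1)}(f,h)$. Without this decoupling you cannot derive \eqref{zz}, and the amplification
$\sum_{(\alpha,\beta),\,\exists}\Psi \leqslant \tfrac12\sum_{(\alpha,\beta)}H\Psi \sim \tfrac38 M(\alpha,\beta)N(Q)$,
hence $\sum_{(\alpha,\beta),\,\nexists}\Psi \geqslant \tfrac58 M(\alpha,\beta)N(Q)$, does not close. Finally, a separate sieving step (Proposition~\ref{prop:few-zeros}, the analogue of Lemma~2 of \cite{rs}) is still needed to handle the zeros of moderate height $|\gamma_f|\geqslant(\log X\log\log X)^{-1}$ in the remainder, which your sketch glosses over.
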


This result is in phase with Conjecture \ref{conj:keating-snaith} with $M_{\mathcal{F}} = -\tfrac12 \log \log c(f)$ and $V_{\mathcal{F}} = {\log \log c(f)}$.

\begin{rmk}
    In the case of a modular form with sign $\varepsilon_f = -1$ in the functional equation, its associated central $L$-value vanish, so that in such a case there is no hope to obtain a lower bound with a constant $1$ towards the Keating-Snaith conjecture (which hides such issues in the notion of "reasonable" family). 
    Using the sieving technique from \cite{ils}, it is possible to isolate the modular forms having positive sign in the functional equation, and the same approach as the one presented here would yield a constant $13/16$ towards Conjecture \ref{conj:katz-sarnak}.
\end{rmk}

\subsection{Strategy of proof and structure of the paper} 

In Section \ref{sec:oddsandends} we recall the needed definitions on modular forms and $L$-functions. In particular, explicit formulas relate central values of $L$-functions to sums of modular coefficients over primes,
\begin{equation}
\log L(\tfrac12, f) \leadsto \sum_{p \leqslant x} \frac{a_f(p)}{\sqrt{p}} - \frac{1}{2}\log \log x,
\end{equation}
so that the claimed mean is already displayed and most of the study reduces to understanding the distribution of the above sums over primes, denoted $P(f,x)$, as well as the error in the explicit formula, which can be expressed as a sum over zeros of $L$-functions. In order to study the distribution of the sums over primes~$P(f,x)$, we appeal to the moment method and examine the behavior of their powers~$P(f,x)^k$. Regrouping equal primes together, this leads to consider sums of the form
\begin{equation}
\label{ff}
    \sum_{\substack{p_1, \ldots, p_\ell \\ p_i \neq p_j }} \frac{a_f(p_1)^{\alpha_1} \cdots a_f(p_\ell)^{\alpha_\ell}}{p_1^{\alpha_1} \cdots p_\ell^{\alpha_\ell}},
\end{equation}
where the $p_i$ are prime numbers and the $\alpha_i$ are positive integers. The study of these sums constitutes the heart of the paper and requires to finely understand sums of Fourier coefficients of modular forms.
In Section \ref{sec:moments}, the contribution of the terms with $\alpha_i = 2$ is shown to be the dominant term, and to match the corresponding moment of the normal distribution i.e. the main term in Theorem~\ref{prop:moments}. We inductively reduce the study of the other sums of the form \eqref{ff} to the case where the only powers arising are~$\alpha_i = 1$; these are then inductively shown to contribute as an error term by using the harmonic average and trace formulas, a strategy implemented in Section~\ref{sec:only-ones}.
Section~\ref{sec:conclusion} concludes the proof by showing that the extra terms arising in the explicit formula, in the guise of sum over zeros, are negligible except for a small proportion of modular forms. This is the place where limited results towards the distribution of low-lying zeros are used, and is the origin of the loss in Theorem \ref{thm:thm} compared to the Keating-Snaith conjecture.

\begin{rmk}
Radziwi\l\l{} and Soundararajan \cite{rs} outline a general strategy to prove such results. However, in their specific case of quadratic twists of elliptic curves, they rely on the Poisson summation formula to estimate character sums, as well as the complete multiplicativity of characters. These tools are however not available in the case of modular coefficients, and it requires the inductive use of Hecke relations instead of multiplicativity, and of trace formulas instead of Poisson summation formula.
\end{rmk}

\section{Odds and ends}
\label{sec:oddsandends}

\subsection{Modular \textit{L}-functions}

We start recalling the needed theory of modular forms, referring to~\cite{iwaniec_topics} for a detailed account.
Let $S_k(q)$ be the space of holomorphic cusp forms of weight $k$, level~$q$ and trivial nebentypus. A cusp form $f \in S_k(q)$ has an attached $L$-function defined by
\begin{equation}
\label{dirichlet-series}
L(s,f) = \sum_{n = 1}^\infty \frac{a_f(n)}{n^s},
\end{equation}
where the $a_f(n)$ are its Fourier coefficients, defined by the Fourier expansion
\begin{equation}
\label{fourier-expansion}
f(z) = \sum_{n \geqslant 1} a_f(n) n^{(k-1)/2} e(nz).
\end{equation}
The modular forms are arithmetically normalized, i.e. we assume that $a_f(1) = 1$. In this normalization, Deligne's bound states that $a_f(n) \ll d(n) \ll n^\varepsilon$, where $d(n)$ denotes the divisor function. In particular, the Dirichlet series \eqref{dirichlet-series} converges for all $\Re(s) > 1$. The degree two $L$-function~$L(s,f)$ can be completed by explicit gamma factors \cite[Section 5.11]{ik} so that we have the functional equation
\begin{equation}
\label{completed-L-function}
\Lambda(s,f) := (2\pi)^{-s} q^{s/2} \Gamma(s+\tfrac{k-1}{2})\Gamma(s+\tfrac{k+1}{2})L(s,f) = \varepsilon_f \Lambda(1-s,f),
\end{equation}
where $\varepsilon_f \in \{\pm 1\}$ is the root number of $f$. 

If $f \in S_k(q)$ is an eigenfunction of all the Hecke operators $T_n$ for $(n,q)=1$, we say that $f$ is a Hecke eigenform. If it moreover lies in the orthogonal complement of the space of the oldforms, i.e. those of the form $f(z) = g(dz)$ for a certain $g \in S_k(q/d)$ where $d \mid q$, then we say that~$f$ is a newform, case in which it is an eigenform for the Hecke operators $T_n$ for \textit{all} $n \geqslant 1$. Let $H_k(q) \subset S_k(q)$ be an orthogonal basis of the space of newforms consisting of Hecke eigenforms~$f$.
For $f \in H_k(q)$, we have the Euler product
\begin{equation}
\label{euler-product}
L(s,f) = \prod_p (1-a_f(p)p^{-s} + p^{-2s})^{-1} = \prod_p (1-\alpha_f(p) p^{-s})^{-1} (1-\beta_f(p)p^{-s})^{-1},
\end{equation}
where the product is over prime numbers $p$, and $\alpha_f(p), \beta_f(p) \in \mathbb{C}$ are called the spectral parameters of $f$ at $p$. This expression encapsulates the Hecke relations satisfied by the coefficients. By taking the logarithmic derivative of this expression, we obtain
\begin{equation}
\label{log-derivative}
-\frac{L'}{L}(s,f) = \sum_{n \geqslant 1} \frac{\Lambda_f(n)}{n^s},
\end{equation}
where $\Lambda_f(n) = (\alpha_f(p)^k + \beta_f(p)^k) \log(p)$ if $n = p^k$ is a prime power, and $\Lambda_f(n) = 0$ otherwise. 

We assume the generalized Riemann hypothesis for the symmetric squares $L$-functions $L(s, \mathrm{sym}^2 f)$ all along the paper.

\subsection{Explicit formula for sums over zeros}

We have the celebrated Weil explicit formula, proven for instance in \cite[(4.11)]{ils}, relating sum over zeros of $L$-functions and sum over primes of their spectral parameters. For any smooth function $h$ with compact Fourier support, we have
\begin{equation}
\label{explicit-formula-zeros}
D(f,h) = \widehat{h}(0) - \frac{2}{\log c(f)} \sum_{p} \sum_{\nu \geqslant 1} (\alpha_f(p)^\nu + \beta_f(p)^\nu) \frac{\log p}{p^{\nu/2}} \widehat{h}\left( \frac{\nu \log p}{\log c(f)}\right) + O\left( \frac{1}{\log c(f)}\right).
\end{equation}

Using the relations between coefficients and spectral parameters when $p \nmid q$, and the Deligne bounds on~$a_f(n)$, we obtain that the terms $\nu \geqslant 3$ contribute no more than the displayed error term, so that we deduce as in~\cite[Lemma 4.1]{ils} the following expansion of the one-level density.
\begin{prop}[Explicit formula for sums over zeros]
\label{prop:explicit-formula-zeros}
We have, for all $f \in H_k(q)$ and all smooth function $h$, 
\begin{equation}
\label{d-expansion-ils}
D(f, h) = \widehat{h}(0) + \tfrac12h(0) + P^{(1)}(f, h) + P^{(2)}(f, h) + O\left(\frac{\log\log c(f)}{\log c(f)}\right),
\end{equation}
where, for $\nu \geqslant 1$, we let 
\begin{equation}
\label{Pnu}
P^{(\nu)}(f,h) = \frac{2}{\log c(f)} \sum_{p\nmid q} a_f(p^\nu) \frac{\log p}{p^{\nu/2}} \widehat{h}\left( \frac{\nu \log p}{\log c(f)}\right), 
\end{equation}
where the sum runs over prime numbers $p$ not dividing $q$.
\end{prop}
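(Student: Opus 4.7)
The plan is to deduce the claimed expansion directly from the Weil explicit formula \eqref{explicit-formula-zeros} by isolating the $\nu = 1$ and $\nu = 2$ contributions and absorbing everything else into the error term, following the approach of Iwaniec, Luo and Sarnak. The bulk of the work amounts to converting the spectral quantities $\alpha_f(p)^\nu + \beta_f(p)^\nu$ into the Fourier coefficients $a_f(p^\nu)$ via the Hecke relations, and then extracting the $\tfrac12 h(0)$ term through a Mertens-type evaluation.

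First I would truncate the inner sum over $\nu$. Since $\widehat h$ is compactly supported, say in $[-U, U]$, only $\nu$ with $p^\nu \leq c(f)^U$ contribute. For $\nu \geq 3$, Deligne's bound $|\alpha_f(p)|, |\beta_f(p)| \leq 1$ at unramified primes (and the sharper vanishing or $|\alpha_f(p)| \leq p^{-1/2}$ at ramified ones) gives
\begin{equation*}
\frac{1}{\log c(f)} \sum_p \sum_{\nu \geq 3} \frac{\log p}{p^{\nu/2}} \ll \frac{1}{\log c(f)} \sum_p \frac{\log p}{p^{3/2}} \ll \frac{1}{\log c(f)},
\end{equation*}
so the tail is absorbed. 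For $\nu = 1$, the identity $\alpha_f(p) + \beta_f(p) = a_f(p)$ at unramified $p$ reproduces the term $P^{(1)}(f,h)$, while the ramified contribution is bounded by $(\log c(f))^{-1} \sum_{p \mid q} \log p / p \ll \log \log q / \log c(f)$, using the elementary estimate $\sum_{p \mid q} \log p / p \ll \log \log q$ (maximized at primorial-shaped $q$). This matches the stated error of size $\log \log c(f) / \log c(f)$.

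The heart of the argument is the $\nu = 2$ contribution. At unramified primes, using $\alpha_f(p) \beta_f(p) = 1$ together with the Hecke relation $a_f(p^2) = a_f(p)^2 - 1$, one has
\begin{equation*}
\alpha_f(p)^2 + \beta_f(p)^2 = a_f(p)^2 - 2 = a_f(p^2) - 1.
\end{equation*}
The $a_f(p^2)$ piece is precisely $P^{(2)}(f,h)$, and the residual $-1$ piece reduces to
\begin{equation*}
\frac{2}{\log c(f)} \sum_{p \nmid q} \frac{\log p}{p}\, \widehat h\!\left(\frac{2 \log p}{\log c(f)}\right).
\end{equation*}
Partial summation against Mertens' asymptotic $\sum_{p \leq X} \log p / p = \log X + B + o(1)$, combined with the change of variables $u = 2 \log p / \log c(f)$ and an integration by parts, evaluates this to $\int_0^\infty \widehat h(u)\,du + O(1/\log c(f))$. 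Since $h$ is even, Fourier inversion gives $\int_\R \widehat h(u)\,du = h(0)$, whence $\int_0^\infty \widehat h(u)\,du = \tfrac12 h(0)$; this is the origin of the stated $\tfrac12 h(0)$ term. The ramified $\nu = 2$ piece is controlled by $|\alpha_f(p)|^2 \leq 1/p$ and contributes only $O(1/\log c(f))$.

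The main delicate point is the Mertens evaluation: one has to verify that the smooth cutoff provided by $\widehat h$ exactly produces the $\tfrac12 h(0)$ constant, with the Mertens constant $B$ disappearing into the $O(1/\log c(f))$ error after multiplication by $2/\log c(f)$, and no spurious contribution from the lower limit of integration. Once that is in place, the four residual errors (the $\nu \geq 3$ tail, the ramified $\nu = 1$ term of size $\log \log c(f) / \log c(f)$, the ramified $\nu = 2$ piece, and the Mertens remainder) all fit within the claimed error $O(\log \log c(f) / \log c(f))$, completing the deduction.
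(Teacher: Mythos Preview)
Your proposal is correct and follows exactly the approach the paper indicates (which is essentially a pointer to \cite[Lemma 4.1]{ils}): truncate $\nu\ge 3$ by Deligne, convert $\alpha_f(p)^\nu+\beta_f(p)^\nu$ to $a_f(p^\nu)$ via the Hecke relations for $\nu=1,2$, and extract $\tfrac12 h(0)$ from the residual $-1$ in $\alpha_f(p)^2+\beta_f(p)^2=a_f(p^2)-1$ by a Mertens-type evaluation. You give more detail than the paper does (in particular on the Mertens step and the ramified bound $\sum_{p\mid q}\log p/p\ll\log\log q$), but the substance is identical.
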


\begin{rmk}
    The stated result from ~\cite[Lemma 4.1]{ils} displays the contribution of the squares of primes, i.e. the term $P^{(2)}(f, h)$. This term can be included in the error term under the generalized Riemann hypothesis of $L(s, \mathrm{sym}^2 f)$, that we assume for other -- but similar -- purposes, see \cite[(4.23)]{ils}.
\end{rmk}

\subsection{Explicit formula for central values}

The connection between central values of $L$-functions, sums over primes and sums over zeros dates back to Selberg, and can be found in \cite[Proposition~1]{rs} in the case of quadratic Dirichlet characters. The proof carries on \textit{mutatis mutandis}.
\begin{prop}[Explicit formula for central values]
\label{prop:explicit-formula-central-values}
Assume that $L(\tfrac12, f)$ is nonzero. We have, for all $x \leqslant c(f)$,
\begin{equation}    
\label{eq:explicit-central-value}
\log L(\tfrac12, f) = P(f, x) - \tfrac12 \log \log x + O\Big( \frac{\log c(f)}{\log x} + \sum_{\gamma_f} \log(1 + (\gamma_f \log x)^{-2}) \Big),
\end{equation}
where we defined the sum over primes
\begin{equation}
\label{pfx}
P(f, x)=\sum_{\substack{p<x \\ p\nmid q}} \frac{a_f(p)}{p^{1/2}}.
\end{equation}
\end{prop}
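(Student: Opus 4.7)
The plan is to follow Selberg's classical strategy for $\log \zeta$, transposed to modular $L$-functions exactly as Radziwi\l\l\ and Soundararajan do in the quadratic twist setting. The starting point is the Dirichlet expansion
\[
\log L(s,f) \;=\; \sum_p \sum_{\nu \ge 1} \frac{\alpha_f(p)^\nu + \beta_f(p)^\nu}{\nu\, p^{\nu s}}, \qquad \Re(s) > 1,
\]
combined with a Mellin/contour representation: choose a smooth cutoff $w$ at scale $x$ and consider
\[
\frac{1}{2\pi i} \int_{(c)} \log L\!\left(\tfrac12 + s, f\right) \widehat{w}(s)\, x^s\, \dd s
\]
for a large $c$. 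Shifting the contour to $\Re(s) = -c$, the residue at $s = 0$ reproduces $\log L(\tfrac12, f)$, and the residues gathered at $s = i \gamma_f$ give, after a direct computation on $\widehat{w}$, the sum $\sum_{\gamma_f} \log(1+(\gamma_f \log x)^{-2})$; the remaining vertical and horizontal contour pieces are bounded by $O(\log c(f)/\log x)$ using standard convexity estimates on $\log L$ inside the critical strip.

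On the Dirichlet side the integral becomes, up to negligible smoothing error, a truncated version at $p^\nu \le x$ of $\sum_p \sum_{\nu \ge 1} (\alpha_f(p)^\nu + \beta_f(p)^\nu)/(\nu p^{\nu/2})$, which I would split according to $\nu$. The $\nu = 1$ slice reproduces exactly $P(f,x)$, and the tail $\nu \ge 3$ contributes $O(1)$ by Deligne's bound $|\alpha_f(p)|, |\beta_f(p)| \le 1$. The delicate slice is $\nu = 2$: using the Hecke relation $\alpha_f(p)^2 + \beta_f(p)^2 = a_f(p^2) - 1$ for $p \nmid q$, this contribution equals
\[
-\tfrac12 \sum_{\substack{p \le x \\ p \nmid q}} \frac{1}{p} \;+\; \tfrac12 \sum_{\substack{p \le x \\ p \nmid q}} \frac{a_f(p^2)}{p} \;+\; O(1).
\]
The first sum gives $-\tfrac12 \log \log x + O(1)$ by Mertens, while the second is $O(1)$ by partial summation from the prime number theorem for $L(s, \mathrm{sym}^2 f)$ under the assumed GRH, using that $a_f(p^2)$ coincides (for $p \nmid q$) with the $p$-th Fourier coefficient of $\mathrm{sym}^2 f$.

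The main obstacle is precisely this $\nu = 2$ contribution: it is the only point where GRH for $L(s, \mathrm{sym}^2 f)$ is actually used, and it is essential both to produce the main term $-\tfrac12 \log \log x$ and to rule out a spurious correction coming from $\sum_{p \le x} a_f(p^2)/p$. Once this is under control, assembling the three pieces --- the residue $\log L(\tfrac12, f)$, the truncated sum over primes $P(f,x)$, and the explicit zero-sum error --- yields the stated formula. The remaining verifications (bounding the smoothed contour, controlling the weight function, absorbing $O(1)$ terms into the stated error) are routine and exactly parallel to \cite[Proposition~1]{rs}.
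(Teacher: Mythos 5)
Your proposal follows exactly the route the paper intends: the paper's own ``proof'' is a one-line citation of \cite[Proposition~1]{rs} with the remark that the Selberg-type argument transposes \emph{mutatis mutandis}, and you have correctly supplied the modular-form-specific adaptations --- the $\nu$-splitting, the Hecke relation $\alpha_f(p)^2 + \beta_f(p)^2 = a_f(p^2) - 1$ for $p \nmid q$, Mertens' estimate producing the $-\tfrac12\log\log x$ main term, and GRH for $L(s,\mathrm{sym}^2 f)$ to control $\sum_{p\leqslant x} a_f(p^2)/p$. One small imprecision worth flagging: $\log L(\tfrac12+s,f)$ has branch points, not poles, at $s = i\gamma_f$, so the zero sum $\sum_{\gamma_f}\log(1+(\gamma_f\log x)^{-2})$ should be extracted from the Hadamard factorization (or by first working with $L'/L$ and integrating back, as in \cite{rs}) rather than by naive residue computation at the zeros; this changes the bookkeeping, not the strategy.
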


Note that the term $-\tfrac12\log\log x$ is the expected mean of the logarithmic central values as predicted by Conjecture \ref{conj:keating-snaith} and stated in Theorem \ref{thm:thm}. This property reduces the study of central $L$-values to the study of their distribution around the mean, which is governed by the above sum over primes (studied in Theorem \ref{prop:moments}) and by the sum over zeros in the error term (studied in Section \ref{sec:conclusion}). 

\subsection{Trace formulas}

We introduce in this section quasi-orthogonality statements which will be central to understand harmonic averages of coefficients. Recall that we denote $H_k(q)$ an orthogonal basis of Hecke \textit{newforms} of level $q$ and introduce $B_k(q)$ an orthogonal basis of \textit{all} Hecke eigenforms of level $q$.

\subsubsection{Petersson trace formula} Consider the averages
\begin{equation}
    \Delta_q(m, n) := \sumh_{f \in B_k(q)} a_f(m) a_f(n).
\end{equation}
We then have the following quasi-orthogonality statement \cite[Proposition 2.1]{ils}.

\begin{prop}[Petersson trace formula]
\label{prop:ptf}
    For $m, n, q \geqslant 1$, we have
    \begin{equation}
        \Delta_q(m, n) = \delta_{m, n} + 2\pi i^{-k} \sum_{\substack{c \geqslant 1 \\ q \mid c}} \frac{S(m, n, c)}{c} J_{k-1}\left( \frac{4\pi \sqrt{mn}}{c}\right)
    \end{equation}
    where $\delta_{m, n}$ is the Kronecker delta symbol, $J_{k-1}$ is the J-Bessel function of order $k-1$, and $S(m, n, c)$ is the $\mathrm{GL}(2)$ Kloosterman sum, defined by 
    \begin{equation}
        S(m, n, c) := \sum_{a \in (\mathbb{Z}/c\mathbb{Z})^\times} e\left( \frac{am + \bar{a}n}{c}\right), 
    \end{equation}
    where $\bar{a}$ denotes the inverse of $a$ in $(\mathbb{Z}/c\mathbb{Z})^\times$.
\end{prop}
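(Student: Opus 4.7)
The strategy I would follow is the classical Petersson approach via holomorphic Poincaré series. For each integer $m \geqslant 1$, introduce the weight-$k$ Poincaré series of level $q$ attached to the cusp at infinity,
\[
P_m(z) := \sum_{\gamma \in \Gamma_\infty \backslash \Gamma_0(q)} e(m\gamma z) j(\gamma, z)^{-k},
\]
with $j(\gamma, z) = cz + d$ the usual automorphy factor. I would check absolute convergence for $k \geqslant 3$ (invoking Hecke's regularization for $k = 2$), verify $P_m \in S_k(q)$, and establish the reproducing property
\[
\langle f, P_m \rangle = \frac{\Gamma(k-1)}{(4\pi m)^{k-1}}\, \overline{a_f(m)}\, m^{(k-1)/2}
\]
for every $f \in S_k(q)$, by unfolding the Petersson integral against the Fourier expansion \eqref{fourier-expansion}.

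I would then compute the inner product $\langle P_m, P_n \rangle$ in two different ways. On the spectral side, expanding one Poincaré series along the orthogonal basis $B_k(q)$ and applying the reproducing property produces an expression proportional to $\sum_{f \in B_k(q)} \overline{a_f(m)} a_f(n) / \|f\|^2$, which is precisely $\Delta_q(m,n)$ times the harmonic normalization built into the definition of $\sumh$ in \eqref{harmonic-weights}. On the geometric side, I would unfold $P_n$ directly and split the remaining sum over $\gamma \in \Gamma_\infty \backslash \Gamma_0(q)$ via the Bruhat decomposition: the identity double coset contributes the Kronecker term $\delta_{m, n}$, while each non-identity double coset, parametrized by the lower-left entry $c$ (necessarily divisible by $q$) and residues $a \in (\mathbb{Z}/c\mathbb{Z})^\times$ of the upper-left entry, assembles into a Kloosterman sum $S(m, n, c)/c$ multiplied by a specific oscillatory integral over $\mathbb{R}$.

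The crucial analytic step is then to recognize that integral as an incarnation of the J-Bessel function, producing the factor $2\pi i^{-k} J_{k-1}(4\pi\sqrt{mn}/c)$; this follows from a standard contour argument equivalent to Hankel's integral representation of $J_{k-1}$. Matching the two expressions for $\langle P_m, P_n \rangle$ and absorbing the common prefactor $\Gamma(k-1)/(4\pi\sqrt{mn})^{k-1}$ into the harmonic normalization of $\sumh$ and the definition of $\Delta_q$ yields the claimed identity.

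The main obstacle is the careful bookkeeping of constants through the unfolding, the change of variable, and the Bessel integral evaluation, in particular pinning down the precise factor $2\pi i^{-k}$ rather than merely an absolute constant of the right order of magnitude. A secondary technical point is the treatment of the low weight $k = 2$, where the Poincaré series itself requires Hecke's regularization; the extra terms introduced cancel on both sides of the two computations of $\langle P_m, P_n \rangle$ and do not affect the final formula.
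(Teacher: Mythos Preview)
Your proposal is correct and follows the classical Petersson argument via holomorphic Poincar\'e series, which is exactly the proof underlying the reference the paper cites. The paper itself does not give a proof of this proposition: it merely states the result and refers to \cite[Proposition~2.1]{ils}, so there is nothing further to compare.
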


Define the average of coefficients over the \textit{newforms}, 
\begin{equation}
     \Delta^\star_q(m, n) := \sumh_{f \in H_k(q)} a_f(m) a_f(n).
\end{equation}

We have the following sieving result that relates averages over $B_k(q)$ and over $H_k(q)$, in other words allowing to sieve oldforms in, see \cite[Lemma 2.3]{bcl}.

\begin{lem}
\label{lem:sieve}
    Suppose that $m, n , q$ are positive integers with $(mn,q)=1$, and let $q = q_1q_2$ where $q_1$ is the largest factor of $q$ satisfying $p \mid q_1 \Leftrightarrow p^2 \mid q$ for all primes $p$. Then we have
    \begin{equation}
        \Delta^\star_q(m, n) = \sum_{\substack{q = L_1L_2d \\ L_1 \mid q_1 \\ L_2 \mid q_2}} \frac{\mu(L_1L_2)}{L_1L_2} \prod_{\substack{p \mid L_1 \\ p^2 \nmid d}} \left( 1-p^{-2}\right)^{-1}  \sum_{e \mid L_2^\infty} \frac{\Delta_d(me^2, n)}{e}.
    \end{equation}
\end{lem}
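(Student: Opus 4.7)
The plan is to derive the sieving formula by Möbius inversion applied to the classical new/old decomposition of modular forms. Recall that
\[
S_k(q) = \bigoplus_{L \mid q} \bigoplus_{f \in H_k(L)} \mathrm{span}\{f|_d : d \mid q/L\},
\]
where $f|_d(z) = f(dz)$. An orthogonal basis $B_k(q)$ entering $\Delta_q(m,n)$ can be chosen block-adapted to this decomposition, with one block attached to each newform $f \in H_k(L)$ for $L \mid q$. The lifts $\{f|_d\}_{d \mid q/L}$ are not mutually orthogonal, but their normalized Gram matrix depends only on local data at primes dividing $q/L$, and factors as a product of local Gram matrices computable from the Euler factors of $L(s, f \otimes f) = \zeta(s) L(s, \mathrm{sym}^2 f)$.

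Second, I would invert the local Gram matrices and re-express the basis-adapted pairings in terms of the newform coefficients $a_f(m)$, $a_f(n)$. Using the Hecke multiplication $a_f(m) a_f(e^2) = \sum_{g \mid (m,e^2)} a_f(me^2/g^2)$ to reassemble the contribution of the lifts, one obtains an identity of the form
\[
\Delta_q(m,n) = \sum_{L \mid q} \lambda_L(q) \sum_{e \mid (q/L)^\infty} \frac{\Delta^\star_L(me^2, n)}{e},
\]
for explicit local weights $\lambda_L(q)$. The sum over $e \mid (q/L)^\infty$ records the repeated action of Hecke multiplication at primes where the newform is lifted, and the squared argument $me^2$ arises from the Hecke relation recasting $a_{f|_e}$ in terms of $a_f$.

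Third, Möbius inversion over divisors of $q$ recovers $\Delta^\star_q(m,n)$ from the family $\{\Delta_d(m,n)\}_{d \mid q}$. The splitting $q = q_1 q_2$, with $p \mid q_1 \Leftrightarrow p^2 \mid q$, reflects a local case distinction: at primes $p \mid q$ with $p^2 \nmid q$ the Gram matrix is $2\times 2$ and yields a clean Möbius weight $\mu(L_2)/L_2$; at primes with $p^2 \mid q$ the local matrix is larger and produces both the extra factor $(1-p^{-2})^{-1}$ and the Möbius weight $\mu(L_1)/L_1$. Factorizing $L = L_1 L_2$ accordingly separates the two regimes and yields the stated formula. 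The condition $(mn, q) = 1$ ensures that all the local Hecke data at primes dividing $q$ is trivial on the $m, n$ side, so that all local factors can be extracted cleanly.

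The main obstacle is the explicit local Gram-matrix computation at primes dividing $q/L$: evaluating $\langle f|_{d_1}, f|_{d_2}\rangle / \|f\|^2$ requires the Rankin--Selberg integral representation of $L(s, f \otimes f)$ and careful analysis of its local Euler factors. Inverting these local matrices, and tracking the resulting Möbius weights across primes with different $p$-adic valuations in $q$, is the combinatorial crux of the argument, and the factorization over primes is what allows the product $\prod_{p \mid L_1,\, p^2 \nmid d}(1-p^{-2})^{-1}$ in the statement to emerge in the expected clean form.
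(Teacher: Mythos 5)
The paper does not actually prove this lemma: it is imported verbatim from Baluyot--Chandee--Li as their Lemma~2.3, and the paper's use of it is purely by citation. So the relevant comparison is between your sketch and the BCL argument.

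Your outline correctly identifies the mechanism underlying the BCL proof: decompose $S_k(q)$ into new/old spaces, compute the Gram matrix of the lifts $f|_d$ via Rankin--Selberg data, invert to express $\Delta_q$ as a weighted sum of $\Delta_L^\star$ over $L \mid q$, and then run a M\"obius-type inversion to isolate $\Delta_q^\star$ in terms of the unrestricted averages $\Delta_d$. This is essentially the route taken in Iwaniec--Luo--Sarnak (Proposition~2.8, for $q$ squarefree) and extended by BCL to arbitrary level. The remark following the lemma in the paper (that the M\"obius factor forces $(L_2,d)=1$ and hence $(e,d)=1$) is also consistent with the structure you describe. However, what you have is a program, not a proof, and you say so yourself. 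In particular, your intermediate forward identity has the $e$-sum running over $e \mid (q/L)^\infty$, whereas in the final statement the sum is only over $e \mid L_2^\infty$, i.e.\ supported on primes where $p \| q$; the mechanism that eliminates the $p$-power sum at primes with $p^2 \mid q$ and replaces it by the single local factor $(1-p^{-2})^{-1}$ is precisely the piece of local bookkeeping your sketch does not pin down. Resolving this requires writing out the orthogonalized old-class bases separately at primes with $v_p(q)=1$ and $v_p(q)\geqslant 2$, and then verifying that the proposed M\"obius-weighted combination of $\Delta_d$ inverts the resulting convolution; until that is done the identity, with its specific $q_1q_2$ split, remains asserted rather than derived.
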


\begin{rmk}
Note that, because of the Möbius function $\mu(L_1L_2)$, we necessarily have $(L_2, d)=1$ and $(e, d) = 1$, which will be of much use later.
\end{rmk}

\subsubsection{Kuznetsov trace formula}
\label{subsubsec:ktf}

The spectral theory of automorphic forms is explained in various references such as \cite[Chapter 15]{ik}. We introduce the notations from \cite[Lemma 3.1]{bcl} to describe the three types of elements of the spectrum.
\begin{enumerate}
    \item[] (Modular forms) Let $B_\ell(q)$ be an orthogonal basis of the space of holomorphic cusp forms of weight $\ell$ and level $q$, which dimension is denoted $\theta_\ell(q)$. We can write $f_{j, \ell}$ for the elements of $B_\ell(q)$ and introduce their Fourier coefficients through the Fourier expansion 
\begin{equation}
    f_{j, \ell}(z) = \sum_{n \geqslant 1} \psi_{j, \ell}(n) (4\pi n)^{\ell / 2} e(nz).
\end{equation}
We say that $f$ is a Hecke eigenform if it is an eigenfunction of all the Hecke operators $T_n$ for $(n,q)=1$.  Note that for $(n,q)=1$, we have $a_{j, \ell}(n) a_{j, \ell}(1) = \sqrt{n}\psi_{j, \ell}(n)$.\medskip
    \item[] (Maass forms) Let $\lambda_j = \tfrac14 + \kappa_j^2$ be the eigenvalues of the hyperbolic Laplacian counted with multiplicities and in increasing order, in the space of cusp forms on $L^2(\Gamma_0(q) \backslash \mathbb{H})$. By convention, choose the sign of $\kappa_j$ such that $\kappa_j \geqslant 0$ when $\lambda_j \geqslant \tfrac14$ and $i \kappa_j > 0$ when $\lambda_j < \tfrac14$. For each positive $\lambda_j$, choose corresponding eigenvectors $u_j$ in such a way that the family~$(u_j)_j$ forms an orthonormal basis of the corresponding eigenspace, and define the associated Fourier coefficients~$\rho_j(m)$ by the Fourier expansion
\begin{equation}
    u_j(z) = \sum_{m\neq 0} \rho_j(m) W_{0, i\kappa_j}(4\pi|n| y)e(mx), 
\end{equation}
where $W_{0,it}(y) := (y/\pi)^{1/2}K_{it}(y/2)$ is a Whittaker function, and $K_{it}$ is the modified Bessel function of the second kind. We call $u$ a Hecke eigenform if it is an eigenfunction of all the Hecke operators~$T_n$ for $(n,q)=1$, and we then denote by $\lambda_u(n)$ the Hecke eigenvalue of $u$ for $T_n$. Writing~$\rho_u(n)$ as the Fourier coefficient, we have $\lambda_u(n) \rho_u(1) = \sqrt{n}\rho_u(n)$ when $(n,q)=1$. When $u$ is a newform, this holds for all $n \neq 0$ instead.  \medskip
    \item[] (Einsenstein series) Let $\mathfrak{c}$ be a cusp for $\Gamma_0(q)$. We define $\varphi_{\mathfrak{c}}(m,t)$ to be the $m$-th Fourier coefficient of the real-analytic Eisenstein series at $\tfrac12 + it$, i.e. by the Fourier expansion
\begin{equation}
    E_{\mathfrak{c}}(z, \tfrac12 + it) = \delta_{\mathfrak{c} = \infty} y^{\tfrac12 + it} + \varphi_{\mathfrak{c}}(0,t) y^{\tfrac12 - it} + \sum_{m \neq 0} \varphi_{\mathfrak{c}}(m,t) W_{0,it}(4\pi|n|y)e(mx).
\end{equation}
\end{enumerate}

\begin{prop}[Kuznetsov trace formula]
\label{prop:ktf}
    For $\phi : (0, \infty) \to \mathbb{C}$ a smooth and compactly supported function, and $m, n, q \geqslant 1$, we have 
    \begin{align}
        \sum_{\substack{c \geqslant 1 \\ q \mid c}} \frac{S(m, n, c)}{c} \phi\left( \frac{4\pi \sqrt{mn}}{c}\right) & = \sum_{\substack{\ell \in 2\mathbb{N}_+ \\ 1 \leqslant j \leqslant \theta_\ell(q)}} (l-1)! \sqrt{mn}\ \overline{\psi_{j, \ell}}(m) \psi_{j, \ell}(n) \phi_h(\ell) \\
        & \qquad + \sum_j \frac{\overline{\rho_j}(m) \rho_j(n) \sqrt{mn}}{\cosh \pi \kappa_j} \phi_+(\kappa_j)  \\
        & \qquad + \frac{1}{4\pi} \sum_{\mathfrak{c}} \int_{-\infty}^{+\infty} \frac{\sqrt{mn}}{\cosh \pi t} \overline{\varphi_{\mathfrak{c}}}(m, t) \varphi_{\mathfrak{c}}(n, t) \phi_+(t) dt, 
    \end{align}
    where the Bessel transforms are defined as by
    \begin{align}
        \phi_+(r) & := \frac{2\pi i}{\mathrm{sinh}(\pi r)} \int_0^\infty (J_{2ir}(\xi) - J_{-2ir}(\xi)) \phi(\xi) \frac{d\xi}{\xi} \\
        \phi_h(\ell) & := 4i^k \int_0^\infty J_{\ell-1}(\xi) \phi(\xi) \frac{d\xi}{\xi}
    \end{align}
    where $J_\ell$ is the J-Bessel function of the first kind.
\end{prop}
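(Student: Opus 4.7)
This is a classical identity, typically attributed to Kuznetsov \cite{kuznetsov} and Bruggeman, whose modern exposition can be found for instance in Iwaniec--Kowalski \cite[Chapter 16]{ik} or in Iwaniec's \emph{Spectral Methods of Automorphic Forms}. Consequently my plan would be to cite a standard reference and recall only the skeleton of the argument, in the precise normalization of \cite[Lemma 3.1]{bcl} that fits the conventions introduced above for $\psi_{j,\ell}$, $\rho_j$ and $\varphi_{\mathfrak{c}}$.

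The structural idea is the standard Petersson--Kuznetsov duality: one computes the inner product on $L^2(\Gamma_0(q)\backslash \mathbb{H})$ of two Poincaré series $P_m(z;\phi)$ and $P_n(z;\phi)$ attached to the test function $\phi$, in two different ways. First, unfolding the Poincaré series against the Fourier expansion of the other yields the \emph{geometric side}: the delta term $\delta_{m=n}$ plus the Kloosterman sum expression on the left of the claimed identity, with the Bessel-type weight $\phi(4\pi\sqrt{mn}/c)$ appearing naturally from the Bruhat decomposition of $\mathrm{SL}_2$. Second, one expands both Poincaré series against the spectral decomposition of $L^2(\Gamma_0(q)\backslash \mathbb{H})$, which decomposes into the discrete spectrum (holomorphic cusp forms of each weight $\ell$, and Maass cusp forms with spectral parameter $\kappa_j$) and the continuous spectrum (Eisenstein series at each cusp $\mathfrak{c}$, integrated over the critical line). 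Parseval then produces the three sums on the right-hand side, with the Fourier coefficients $\psi_{j,\ell}$, $\rho_j$ and $\varphi_{\mathfrak{c}}$, and the Bessel transforms $\phi_h(\ell)$ and $\phi_+(r)$ arising respectively from the holomorphic and Maass/Eisenstein contributions.

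The only real computational content is the explicit identification of the two transforms $\phi_+$ and $\phi_h$. These come from integrating $\phi$ against the diagonal matrix coefficients of the relevant representations, which reduces classically to the integrals of $\phi$ against J-Bessel functions that appear in the statement. The convergence of the spectral expansion for compactly supported $\phi$ is unproblematic because $\phi_+$ and $\phi_h$ decay sufficiently fast in $r$ and $\ell$ respectively, ensuring absolute convergence of all three sums.

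The main obstacle in a fully self-contained argument would be setting up the spectral decomposition of $L^2(\Gamma_0(q)\backslash \mathbb{H})$ with the correct normalisations for the Eisenstein series at every cusp, together with the analytic continuation needed to evaluate them at $\tfrac12 + it$. Since all of this is standard and has been carried out in \cite{ik} and reproduced with exactly the present normalisation in \cite[Lemma 3.1]{bcl}, the cleanest course of action is to state the proposition as a direct citation of \cite[Lemma 3.1]{bcl}, flagging only that the holomorphic coefficients $\psi_{j,\ell}$, the Maass coefficients $\rho_j$ and the Eisenstein coefficients $\varphi_{\mathfrak{c}}$ are the ones defined in the preceding subsection.
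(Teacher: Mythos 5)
Your proposal matches the paper's treatment: the paper does not prove this proposition at all, it simply states it as a classical result with the normalizations taken from \cite[Lemma 3.1]{bcl}, which is exactly the course of action you advocate. Your sketch of the underlying Poincaré-series unfolding and spectral Parseval argument is accurate and adds helpful context, but the paper, like you, ultimately treats this as a citation.
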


\section{Moments}
\label{subsec:moments}
\label{sec:moments}

By the explicit formula \eqref{eq:explicit-central-value}, a critical quantity to understand in order to control the distribution of the central values is the sums over primes $P(f,x)$, and this will be investigated by means of the moment method as in \cite{rs}. The following result is the fundamental tool to understand their distribution, and is an analogue of \cite[Theorem 3.1]{miller}.
\begin{thm}[Moment property]
\label{prop:moments}
We have, for all $\ell \geqslant 0$, 
\begin{equation}
\label{moments}
\frac{1}{N(Q)} \sum_{q \geqslant 1} \Psi\left( \frac{q}{Q} \right) \sumh_{f \in H_k(q)} P(f,x)^\ell  = (M_\ell + o(1)) (\log \log(x))^{\ell/2}
\end{equation}
where we introduced the $\ell$-th Gaussian moment
\begin{equation}
M_\ell = \frac{1}{\sqrt{2\pi}} \int_{\mathbb{R}} x^\ell e^{-x^2/2} dx = \frac{\ell!}{2^{ \ell/2} (\ell/2)!}.
\end{equation}
\end{thm}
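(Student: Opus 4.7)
The plan is to apply the moment method by combinatorial expansion. Expanding the $\ell$-th power and grouping indices according to equal primes, $P(f,x)^\ell$ becomes a weighted sum of expressions of the shape~\eqref{ff}, indexed by multiplicity patterns $(\alpha_1, \ldots, \alpha_r)$ with $\alpha_1 + \cdots + \alpha_r = \ell$ together with a combinatorial factor counting set partitions of $\{1, \ldots, \ell\}$ of that block-size type. The strategy is to identify the main term arising from the pair-partition case (all $\alpha_i = 2$, so $r = \ell/2$ and $\ell$ is even) and to show that all remaining patterns contribute as error.

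For the pair partitions, the Hecke relation $a_f(p)^2 = a_f(p^2) + 1$ gives the expansion
\begin{equation*}
\prod_{i=1}^{\ell/2} a_f(p_i)^2 = \sum_{S \subseteq \{1, \ldots, \ell/2\}} \prod_{i \in S} a_f(p_i^2).
\end{equation*}
The empty-set piece, weighted by the count $\ell!/(2^{\ell/2}(\ell/2)!)$ of pair-partitions of $\{1, \ldots, \ell\}$, produces exactly the claimed main term after reducing the sum over ordered distinct prime tuples to $\bigl( \sum_{p<x} 1/p \bigr)^{\ell/2} \sim (\log\log x)^{\ell/2}$ via Mertens' theorem, and noting that the diagonal (non-distinct) corrections are of order $(\log\log x)^{\ell/2-1}$. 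Each non-empty $S$ factors out $|S|$ copies of $\sum_{p<x} a_f(p^2)/p$, which is $O(1)$ uniformly in $f$ under GRH for $L(s, \mathrm{sym}^2 f)$, so these contributions are $O((\log\log x)^{\ell/2 - |S|})$ and hence negligible. Being independent of $f$, the main term passes through both the harmonic average over $H_k(q)$ and the smooth average over $q$ unchanged.

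For the other multiplicity patterns, those with some $\alpha_i \geqslant 3$ are handled trivially by Deligne's bound $|a_f(p)|\leqslant 2$ together with the convergence of $\sum_p p^{-\alpha/2}$ for $\alpha \geqslant 3$; those containing at least one $\alpha_i = 1$ are reduced, by iterating the Hecke recurrence $a_f(p^j) = a_f(p)a_f(p^{j-1}) - a_f(p^{j-2})$ on each factor $a_f(p_i)^{\alpha_i}$ and using the GRH bound on $\sum_p a_f(p^2)/p$ to dispose of any leftover higher powers, to pure ``only ones'' sums
\begin{equation*}
\sum_{\substack{p_1, \ldots, p_r < x \\ p_i \neq p_j}} \frac{a_f(p_1) \cdots a_f(p_r)}{\sqrt{p_1 \cdots p_r}},
\end{equation*}
whose harmonic averages are the content of Section~\ref{sec:only-ones} and may be invoked as a black box here.

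The main obstacle is precisely that ``only ones'' estimate. After applying the sieving Lemma~\ref{lem:sieve} and the Petersson trace formula (Proposition~\ref{prop:ptf}), the diagonal $\delta_{m,n}$ vanishes since the $p_i$ are distinct, and one is left with a sum of Kloosterman terms weighted by $J$-Bessel functions, which the Kuznetsov formula (Proposition~\ref{prop:ktf}) should open into a spectral expression whose modular, Maass and Eisenstein pieces admit usable bounds. The delicate point is extracting sufficient cancellation uniformly in the length $r$ of the product and in the range of primes up to $x$, so that $x$ may be taken large enough to arrange $\log\log x \asymp \log\log c(f)$ as required for the application to Theorem~\ref{thm:thm}.
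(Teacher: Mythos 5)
Your proposal reproduces the structure of the paper's own argument: expand the $\ell$-th power, group primes by multiplicity pattern, identify the all-twos (pair-partition) pattern as the main term via the Hecke relation and Mertens, bound patterns containing an exponent $\ge 3$ trivially by Deligne and absolute convergence, reduce mixed patterns to the all-ones case, and treat the all-ones case by sieving in the oldforms, Petersson, and Kuznetsov as in Section~\ref{sec:only-ones}. So the route is the same.

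Two spots are loose. The milder one: the GRH bound on $\sum_{p<x}a_f(p^2)/p$ furnished by Proposition~\ref{lem:2-part-mt-single} is $O(\log\log\log c(f))$, not $O(1)$ as you write; still $o(\log\log x)$, so nothing breaks, but the claim should be calibrated. The more substantive one is your reduction of mixed patterns (the paper's Case~C). You assert that iterating the Hecke recurrence on each factor $a_f(p_i)^{\alpha_i}$ with $\alpha_i\ge 2$ and using GRH to kill higher Hecke eigenvalues reduces these to \emph{pure} only-ones sums. That is not so: the constant piece of $a_f(p)^2 = a_f(p^2)+1$, summed over $p<x$, contributes a factor $\asymp \log\log x$ for each prime of multiplicity two, and these factors do not disappear. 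The paper's induction~\eqref{eq:Case4_induct} retains an explicit prefactor $(\log\log x)^{l-n_l}$ in front of the all-ones sum over the $n_l$ exponent-one primes, and the final bound $o((\log\log x)^{\ell/2})$ closes only because the all-ones estimate $o((\log\log x)^{n_l/2})$ from Proposition~\ref{prop:first-moments}, combined with the arithmetic $\ell = \sum \alpha_i$ with $\alpha_i\ge 2$ off the ones, keeps the total power of $\log\log x$ at or below $\ell/2$. Your write-up would need this bookkeeping made explicit; as stated, the claim ``reduces to pure only-ones sums'' is false even though the eventual bound is right.
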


\begin{rmk}
On average over the family, Theorem \ref{prop:moments} states that the moments of~$P(f,x)$, i.e. essentially the moments of $\log L(\tfrac12,f) + \tfrac12 \log\log x$ by the explicit formula \eqref{eq:explicit-central-value}, match the moments of the normal distribution, hence justifying the shape of Conjecture \ref{conj:keating-snaith} and of Theorem \ref{thm:thm}.
\end{rmk}

The remaining of this section as well as the following one constitute the proof of this result and of two corollaries.

\subsection{Sums over primes of coefficients}
\label{subsec:sums-over-primes}

We follow the strategy of \cite[Proposition 3]{rs} using the tools developed in \cite[Proposition 4.1]{miller}, adapting it to the specific sum over primes $P(f,x)$ arising in the explicit formula. After expanding the power $P(f,x)^\ell$ in 
\begin{equation}
\frac{1}{N(Q)} \sum_{q \geqslant 1} \Psi\left( \frac{q}{Q} \right) \sumh_{f \in H_k(q)} P(f, x)^\ell 
\end{equation}
and gathering together primes that are equal, we are reduced to study sums of the type
\begin{equation}
\label{eq:expanded-moment}
\frac{1}{N(Q)} \sum_{q \geqslant 1} \Psi\left( \frac{q}{Q} \right) \sumh_{f \in H_k(q)} \sum_{\substack{p_1, \ldots, p_l \leqslant x \\ p_i \nmid q \\ p_i \neq p_j}} \frac{a_f(p_1)^{\alpha_1} \cdots a_f(p_\ell)^{\alpha_\ell}}{p_1^{\alpha_1/2} \cdots p_\ell^{\alpha_\ell/2}}, 
\end{equation}
where the $\alpha_i$ are positive integers.
Inspired by the above expression of the expanded moment, introduce the notation, for any integer $\alpha \geqslant 1$ and prime number $p$, 
\begin{equation}
F(p, \alpha) := \frac{a_f(p)^\alpha}{p^{\alpha/2}}.
\end{equation}
By the expansion \eqref{eq:expanded-moment}, it is sufficient to study sums of products of such $F(p, \alpha)$.
We state in this section some first estimates for these quantities. Informally, the sum for higher powers $\alpha \geqslant 3$ will contribute negligibly, the contribution of the sum for powers $\alpha=2$ will display a precise equivalent by means of the Rankin-Selberg method and will determine the effective distribution in Theorem \ref{thm:thm}, and the sum for powers~$\alpha=1$ will be studied using the Perron formula and bounds on $L$-functions.

\begin{lem}[Large parts]
\label{lem:large-parts}
We have, for all $\alpha \geqslant 3$ and uniformly on $f \in H_k(q)$, 
\begin{equation}
\sum_{\substack{p \leqslant x \\ p \nmid q}} F(p,\alpha) \ll 1.
\end{equation}
\end{lem}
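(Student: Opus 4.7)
The plan is to observe that this lemma is essentially a direct consequence of Deligne's bound for Fourier coefficients of Hecke newforms, which is recalled in Section~\ref{sec:oddsandends} as $|a_f(n)| \leqslant d(n)$. At a prime $p$ we get $|a_f(p)| \leqslant d(p) = 2$ uniformly over all $f \in H_k(q)$ and all primes $p$, which is exactly the type of uniformity the statement requires.

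With this at hand, I would write
\begin{equation*}
\left| \sum_{\substack{p \leqslant x \\ p \nmid q}} F(p, \alpha) \right| \leqslant \sum_{p} \frac{|a_f(p)|^{\alpha}}{p^{\alpha/2}} \leqslant 2^{\alpha} \sum_{p} \frac{1}{p^{\alpha/2}},
\end{equation*}
and then note that for $\alpha \geqslant 3$ the remaining prime sum is majorized by the convergent series $\sum_{n \geqslant 2} n^{-3/2}$ (or by the value $\log \zeta(\alpha/2) + O(1)$ coming from the Euler product), hence is bounded by an absolute constant depending only on $\alpha$. Since nowhere in the chain of inequalities does $f$, $q$ or $x$ intervene, the bound holds uniformly on $f \in H_k(q)$, as stated.

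The main subtle point --- and the reason this quick lemma is singled out --- is conceptual rather than technical: it identifies $\alpha = 3$ as the threshold past which prime sums of $F(p,\alpha)$ converge absolutely, so that in the moment expansion \eqref{eq:expanded-moment} every tuple $(\alpha_1, \ldots, \alpha_\ell)$ containing an index $\alpha_i \geqslant 3$ contributes a harmless $O(1)$ factor. The ``hard part'' is therefore not this lemma itself but the complementary regimes $\alpha_i = 2$ (dominant term, to be treated by Rankin--Selberg) and $\alpha_i = 1$ (error term, to be treated by trace formulas), exactly as announced in the paragraph preceding the statement.
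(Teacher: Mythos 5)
Your proof is correct and follows exactly the same route as the paper: apply Deligne's bound $|a_f(p)| \leqslant 2$ at each prime, then observe that the resulting majorant $\sum_p p^{-\alpha/2}$ converges absolutely once $\alpha \geqslant 3$, with no dependence on $f$, $q$, or $x$. Nothing further is needed.
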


\begin{proof}
Using Deligne's bound $|a_f(p)| \leqslant 2$, the result follows since it reduces to the sum of $p^{-3/2}$ which converges absolutely.
\end{proof}

\begin{lem}[$2$-parts]
\label{lem:2-parts}
We have
\begin{equation}\label{2-parts-bound}
\sum_{\substack{p\leqslant x \\ p \nmid q}} F(p,2) \ll \log\log(x),
\end{equation}
where the implied constant is absolute.
\end{lem}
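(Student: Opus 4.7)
The plan is to deduce the bound trivially from Deligne's bound together with Mertens' theorem. With the arithmetic normalization $a_f(1)=1$, Deligne's bound gives $|a_f(p)| \leqslant 2$ for every prime $p$, uniformly in $f \in H_k(q)$. Hence $a_f(p)^2 \leqslant 4$, and
\[
\sum_{\substack{p \leqslant x \\ p \nmid q}} F(p,2) \;=\; \sum_{\substack{p \leqslant x \\ p \nmid q}} \frac{a_f(p)^2}{p} \;\leqslant\; 4 \sum_{p \leqslant x} \frac{1}{p} \;\ll\; \log\log x,
\]
by Mertens' theorem, with an absolute implied constant independent of $f$, $q$, and $x$. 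This is the whole argument; no cancellation among the $a_f(p)^2$ needs to be exploited at this stage.

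A conceptually cleaner route, which also produces the sharper asymptotic with leading constant $1$, is to exploit the Hecke relation $a_f(p)^2 = a_f(p^2) + 1$, valid for $(p,q)=1$ and trivial nebentypus. This decomposes the sum as
\[
\sum_{\substack{p \leqslant x \\ p \nmid q}} \frac{a_f(p)^2}{p} \;=\; \sum_{\substack{p \leqslant x \\ p \nmid q}} \frac{1}{p} \;+\; \sum_{\substack{p \leqslant x \\ p \nmid q}} \frac{a_f(p^2)}{p}.
\]
The first piece is $\log\log x + O(1)$ by Mertens, while the second piece involves the Hecke coefficients of $\mathrm{sym}^2 f$ and is $O(1)$ under the assumed GRH for $L(s, \mathrm{sym}^2 f)$, via partial summation from the corresponding prime number theorem. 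This refinement is not required for the lemma as stated, but is precisely what secures the correct variance $\log\log c(f)$ in Theorem \ref{thm:thm}.

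No step presents a real obstacle here: Deligne's bound is already invoked elsewhere in the paper, and the only genuine analytic input in the sharper version — the GRH for $L(s, \mathrm{sym}^2 f)$ — is a standing assumption. The work of the paper lies elsewhere, namely in controlling the sums of type \eqref{ff} arising from higher moments, where cancellation among the $a_f(p)$ must be harnessed via the Petersson/Kuznetsov trace formulas.
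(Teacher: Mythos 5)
Your first argument is exactly the paper's proof: Deligne's bound $|a_f(p)|\leqslant 2$ gives $a_f(p)^2\leqslant 4$, and Mertens' estimate $\sum_{p\leqslant x}1/p\ll\log\log x$ finishes it. The alternative via the Hecke relation $a_f(p)^2=a_f(p^2)+1$ is not needed here, but it is precisely the refinement the paper carries out later in Proposition~\ref{lem:2-part-mt-single} to obtain the genuine asymptotic for the $2$-part.
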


\begin{proof}
The bound \eqref{2-parts-bound} is immediate by Deligne's bound $|a_f(p)| \leqslant 2$ and using Mertens' estimate
$$ \sum_{p\leqslant x} \frac1p \ll \log\log(x). $$
\end{proof}

\begin{lem}[$1$-parts]
\label{lem:1-parts}
We have, for all $n \geqslant 1$ and all $x \leqslant q$, 
\begin{equation}
\sum_{\substack{p_1, \ldots, p_n \leqslant x \\ p_i \nmid q \\ p_i \neq p_j}} \prod_{i=1}^n F(p_i, 1) \ll \log(q)^{n + \varepsilon}.
\end{equation}
\end{lem}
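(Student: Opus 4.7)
The strategy is to relate the $n$-fold sum over distinct primes to products of single-variable sums by a combinatorial inversion, then apply the previously established bounds on the simpler sums together with a conditional bound on $P(f,x)$.

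Write $N_\alpha(f,x) := \sum_{p \leq x,\, p \nmid q} F(p,\alpha)$. Expanding $N_1(f,x)^n = \sum_{(p_1,\ldots,p_n)} \prod_i F(p_i,1)$ and grouping the tuples by the set partition $\sigma$ of $\{1,\ldots,n\}$ recording equality of the $p_i$ yields
\begin{equation*}
N_1(f,x)^n = \sum_\sigma M_\sigma(f,x), \qquad M_\sigma(f,x) := \sum_{\substack{(p_B)_{B \in \sigma} \\ \text{pairwise distinct}}} \prod_{B \in \sigma} F(p_B, |B|),
\end{equation*}
the target sum being exactly $M_{\sigma_{\min}}$ where $\sigma_{\min}$ is the partition of $\{1,\ldots,n\}$ into singletons. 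The same grouping applied to $\prod_{B \in \sigma} N_{|B|}(f,x) = \prod_B \sum_{p_B} F(p_B,|B|)$ gives $\prod_B N_{|B|} = \sum_{\tau \ge \sigma} M_\tau$ (with $\tau \ge \sigma$ meaning $\tau$ is coarser than $\sigma$ in the partition lattice). Möbius inversion then expresses $M_{\sigma_{\min}}$ as a finite signed combination of the $\prod_B N_{|B|}(f,x)$, reducing the problem to bounding each such product.

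Lemma~\ref{lem:large-parts} and Lemma~\ref{lem:2-parts} give $|N_\alpha(f,x)| \ll 1$ for $\alpha \ge 3$ and $|N_2(f,x)| \ll \log \log x$, both uniformly in $f$. The bulk of the work is thus a uniform bound on $|N_1(f,x)| = |P(f,x)|$. I would invoke the explicit formula for central values (Proposition~\ref{prop:explicit-formula-central-values}):
\begin{equation*}
P(f,x) = \log L(\tfrac12, f) + \tfrac12 \log\log x + O\Big( \tfrac{\log c(f)}{\log x} + \sum_{\gamma_f} \log\big(1 + (\gamma_f \log x)^{-2}\big) \Big).
\end{equation*}
Under GRH for $L(s,f)$, a classical Littlewood-type argument gives $|\log L(\tfrac12, f)| \ll \log c(f)/\log\log c(f)$, while the zero sum is $O(\log c(f))$: zeros with $|\gamma_f \log x| \ge 1$ contribute $\ll \log c(f)/(\log x)^2$ via the Hadamard-product estimate $\sum_{\gamma_f}(1 + \gamma_f^2)^{-1} \ll \log c(f)$, and there are $O(\log c(f))$ low-lying zeros with $|\gamma_f| < 1/\log x$, each contributing $O(\log\log c(f))$ by Riemann--von Mangoldt counting. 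Hence $|P(f,x)| \ll (\log q)^{1+o(1)}$ uniformly in $f \in H_k(q)$ and $x \le q$. Combining: a partition $\sigma$ with $j$ singleton blocks contributes at most $(\log q)^{(1+o(1))j}(\log\log q)^{O(n-j)}$, maximized at $j = n$; summing over the (finitely many in $n$) partitions yields the claimed $\ll (\log q)^{n+\varepsilon}$.

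The main obstacle is the uniform conditional bound on $P(f,x)$: one must rule out that clustering of low-lying zeros of $L(s,f)$ ruins the zero sum in the explicit formula. Under GRH this clustering is controlled logarithmically by standard zero counting, which is enough. An alternative approach would be to estimate $P(f,x)$ directly via Perron's formula and conditional vertical-strip bounds on $L'/L(s,f)$, but GRH is essential either way.
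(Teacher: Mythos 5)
Your combinatorial reduction via the partition lattice is sound and is essentially a repackaged version of the paper's inductive ``add back the missing primes'' argument: both strategies express the $n$-fold sum over distinct primes in terms of products of single-variable sums $N_\alpha(f,x)$ and use Lemmas~\ref{lem:large-parts} and~\ref{lem:2-parts} to discard the non-singleton blocks. Up to that point the two proofs are equivalent in substance.

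The genuine gap is in the way you bound the $n=1$ quantity $N_1(f,x)=P(f,x)$. You propose to invert the explicit formula of Proposition~\ref{prop:explicit-formula-central-values} to write $P(f,x) = \log L(\tfrac12,f)+\tfrac12\log\log x + O(\cdot)$ and then bound each piece. This step fails for several reasons. First, Proposition~\ref{prop:explicit-formula-central-values} is stated only for $L(\tfrac12,f)\neq 0$, and vanishing central values occur for an entire half of the family (all forms with $\varepsilon_f=-1$), so the formula is simply unavailable for those $f$ and the claimed uniform bound on $P(f,x)$ would be vacuous there. Second, even conditionally on GRH, the Littlewood-type estimate only controls $\log L(\tfrac12,f)$ from above; there is no general lower bound on $L(\tfrac12,f)$, so $|\log L(\tfrac12,f)|$ is not $\ll \log c(f)/\log\log c(f)$ uniformly and can be arbitrarily large. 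Third, your estimate on the sum over zeros is also off: a single zero with $|\gamma_f|$ as small as $c(f)^{-1}$ already contributes $\log\bigl(1+(\gamma_f\log x)^{-2}\bigr)\gg \log c(f)$, not $\log\log c(f)$. The cancellation between the possibly very negative $\log L(\tfrac12,f)$ and the possibly very large zero sum is real, but the big-$O$ statement in the explicit formula does not record it, so you cannot conclude a uniform bound on $P(f,x)$ this way.

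The paper sidesteps $L(\tfrac12,f)$ entirely: it quotes \cite[Lemma 2.12]{miller}, which bounds $\sum_{p\leqslant x,\,p\nmid q} a_f(p)\log p/p^{1/2}\ll \log(x)^{1+\varepsilon}\log q$ directly via Perron's formula applied to $-L'/L(s,f)$ off the critical line, and then removes the $\log p$ weight by partial summation to obtain $P(f,x)\ll \log(q)^{1+\varepsilon}$ for $x\leqslant q$; the $n\geqslant 2$ cases follow by the same combinatorial completion you use. You flag this Perron route yourself as an ``alternative,'' but it is in fact the route that works: to repair your argument you should abandon the explicit-formula detour and bound $P(f,x)$ directly through $L'/L$, where there is no singularity at $s=\tfrac12$ to manage and no dependence on the size of the central value.
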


\begin{proof}
The result \cite[Lemma 2.12]{miller} reads
\begin{equation}
\sum_{\substack{p \leqslant x \\ p \nmid q}} \left( b_f(p) := \frac{a_f(p)\log(p)}{p^{1/2}}\right) \ll \log(x)^{1+\varepsilon}\log(q)
\end{equation}
and by partial summation, we therefore deduce 
\begin{equation}
\sum_{\substack{p \leqslant x \\ p \nmid q}} \frac{a_f(p)}{p^{1/2}} = \sum_{\substack{p \leqslant x \\ p \nmid q}} \frac{b_f(p)}{\log p} \ll \sum_{\substack{p \leqslant x \\ p \nmid q}} \Bigg| \sum_{\substack{p' \leqslant p \\ p' \nmid q}} b_f(p')\Bigg| \frac{1}{p\log^2 p} \ll \log(q)^{1+\varepsilon} \sum_{\substack{p \leqslant x \\ p \nmid q}} \frac{1}{p\log p} \ll \log(q)^{1+\varepsilon}
\end{equation}
giving the desired result for $n=1$. We finish the proof inductively for $n \geqslant 1$, adding back the missing primes in order to get a genuine product, which incur an extra contribution made of higher powers, therefore of negligible size by the two above lemmas.
\end{proof}

\begin{rmk}
The proof of \cite[Lemma 2.12]{miller} boils down to using the Perron formula to relate the sought sum to $L'/L$, on which we have bounds that are enough for the result. 
Note that these "rough" bounds on the $1$-parts and $2$-parts will not be sufficient to bound the whole sum over the family, since the expected main term in Theorem \ref{prop:moments} is of size $\log \log x$ while the above bounds are about $\log x$ and $\log \log x$. The harmonic average (in the guise of trace formulas) or finer properties of $L$-functions will have to be fully exploited in order to get enough cancellations. These bounds will however be sufficient to address number of cases and remains fundamental in the proofs. 
\end{rmk}

The next paragraphs of this section are devoted to prove finer estimates for the $2$-parts (see Proposition \ref{lem:2-part-mt-single}) and to prepare the stage to estimating the $1$-parts.

\subsection{A Rankin bound}
\label{subsec:rankin-bound}

We need a finer and genuine asymptotics for the $2$-part, since it will ultimately contribute as the main term. 
We have the following statement, which is the standard Rankin bound with emphasis on the uniformity of the error term in the function $f$ --- and where we use the generalized Riemann hypothesis.
\begin{prop}
    \label{lem:2-part-mt-single}
    For all~$f \in H_k(q)$, for all $x < c(f)$, and assuming that $L(s,\mathrm{sym}^2 f)$ has no zeros in the rectangle $\{ z \ : \ \sigma_0 \leqslant \Re(z) \leqslant 1, \ |\Im(z) - t| \leqslant 3\}$ for a certain $1/2 \leqslant \sigma_0 < 1$, we have
\begin{equation}
     \sum_{\substack{p < x \\ p \nmid q}} \frac{\lambda_f(p)^2 }{p} = \log\log x + O(\log \log \log c(f)), 
\end{equation}
     where the implied constant is absolute.
\end{prop}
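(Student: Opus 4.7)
The plan is to exploit the Hecke relation $\lambda_f(p)^2 = \lambda_f(p^2) + 1$ for $p \nmid q$, splitting the sum into a Mertens-type main term plus a sum over $\lambda_f(p^2)/p = \lambda_{\mathrm{sym}^2 f}(p)/p$, and then to control the latter through the analytic properties of $L(s, \mathrm{sym}^2 f)$ afforded by the zero-free rectangle hypothesis rather than by an appeal to full GRH.

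First, the Hecke decomposition gives
\begin{equation*}
\sum_{\substack{p<x\\p\nmid q}} \frac{\lambda_f(p)^2}{p} = \sum_{\substack{p<x\\p\nmid q}} \frac{1}{p} + \sum_{\substack{p<x\\p\nmid q}} \frac{\lambda_f(p^2)}{p}.
\end{equation*}
By Mertens' theorem the first sum is $\log\log x + O(1)$ up to the excluded primes $p \mid q$, and since $\omega(q) \ll \log q/\log\log q$ one has $\sum_{p\mid q} 1/p \ll \log\log\log c(f)$. The first piece thus already produces the announced main term within an admissible error.

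The heart of the argument is to show $\sum_{p<x,\, p\nmid q} \lambda_f(p^2)/p \ll \log\log\log c(f)$. For this I would apply a smoothed Perron formula to this truncated sum and invoke Mellin inversion to express it as a contour integral of $\log L(1+s, \mathrm{sym}^2 f)$ (the higher prime-power contributions in the logarithmic expansion $\log L(s, \mathrm{sym}^2 f) = \sum_{p,k\geqslant 1}(\alpha_f(p)^{2k}+1+\beta_f(p)^{2k})/(k\,p^{ks})$ being absolutely bounded on $\Re(s) \geqslant 1/2+\delta$), weighted by a rapidly decaying Mellin kernel. Shifting the contour from the right of $\Re(s)=0$ across to $\Re(s) = \sigma_0-1$ inside the strip $|\Im(s)|\leqslant 3$ (the natural choice $t=0$ corresponding to evaluation at the real point $s=1$), one crosses only the simple pole of the kernel at $s=0$ with residue $\log L(1,\mathrm{sym}^2 f)$: thanks to the zero-free rectangle hypothesis, $\log L(1+s,\mathrm{sym}^2 f)$ is holomorphic in the region swept, so no zero-residues arise. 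The integrals on the two horizontal segments and on the shifted vertical line are then estimated by standard convexity bounds for $L(\sigma+it,\mathrm{sym}^2 f)$ inside the rectangle together with the rapid decay of the Mellin kernel, and contribute negligibly.

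The announced $O(\log\log\log c(f))$ error then reduces to a Littlewood-type estimate on $|\log L(1, \mathrm{sym}^2 f)|$ obtained by combining the Hadamard product for $L(s,\mathrm{sym}^2 f)$ with the absence of zeros in the prescribed rectangle. The main obstacle, and the reason the rectangle is kept of bounded height $|\Im(z)-t|\leqslant 3$, is uniformity in $f$: the vertical integrals in the contour shift grow polynomially in the imaginary part via convexity, so restricting the height is essential to absorb those pieces into absolute constants. Extracting the sharp $\log\log\log c(f)$ bound out of the residue $\log L(1,\mathrm{sym}^2 f)$ is the most delicate step and is precisely where the full strength of the rectangle hypothesis, rather than any unrestricted GRH consequence, is exercised.
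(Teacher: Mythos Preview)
Your overall architecture---Hecke relation $\lambda_f(p)^2=\lambda_f(p^2)+1$, Mertens for the main term, control of the primes $p\mid q$ by $\sum_{p\mid q}1/p\ll\log\log\log c(f)$, and then a Perron-type reduction of $\sum_{p<x}\lambda_f(p^2)/p$ to a bound on $\log L(1,\mathrm{sym}^2 f)$---is exactly the paper's route.

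Where your proposal is genuinely incomplete is the last step, the bound $\log L(1,\mathrm{sym}^2 f)\ll\log\log\log c(f)$. You assert this follows from a ``Littlewood-type estimate'' using the Hadamard product and the height-$3$ zero-free rectangle, and you justify the bounded height by saying the vertical contour integrals must stay under control. But a zero-free rectangle of \emph{bounded} height only yields, via Borel--Carath\'eodory and convexity, a bound of the shape $\log L(1,\mathrm{sym}^2 f)\ll (\log c(f))/(1-\sigma_0)$; this is precisely what the paper records as an intermediate lemma, and it is far too weak. To push this down to $\log\log\log c(f)$ the paper inserts a second device: a short-Dirichlet-series approximation
\[
\log L(s,\mathrm{sym}^2 f)=\sum_{n\le y}\frac{\Lambda_{\mathrm{sym}^2 f}(n)}{n^s\log n}+O\!\left(\frac{\log c(f)}{(\sigma_1-\sigma_0)^2}\,y^{\sigma_1-\sigma}\right),
\]
obtained by truncated Perron plus a contour shift, and then chooses $y$ to be a (large) power of $\log c(f)$. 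That choice is what produces the triple logarithm, and it forces the zero-free region to extend to height $y+3\asymp(\log c(f))^{10/\eta}$, not merely $3$. In other words, the rectangle in the proposition's statement is the hypothesis of one auxiliary lemma, but the full argument (and the paper's own chain of lemmas) actually consumes a taller zero-free strip, which is why the paper works under GRH for $L(s,\mathrm{sym}^2 f)$ throughout. Your sketch does not supply this mechanism, and the sentence ``the reason the rectangle is kept of bounded height $|\Im(z)-t|\le 3$\ldots'' misidentifies where the delicacy lies: the bounded-height rectangle feeds the Borel--Carath\'eodory step, not the final $\log\log\log$ extraction.
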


The remaining of Section \ref{subsec:rankin-bound} is dedicated to the proof of this result.
We start adding back the missing primes and use Deligne's bound $\lambda(p) \ll 1$ to control the incurred error.
We need the following estimate.

\begin{lem}
    For all $x \leqslant q$, we have
\begin{equation}\label{add_back}
    \sum_{\substack{p \le x \\ p\mid q}} \frac{1}{p} \ll \log\log\log(q),
\end{equation}
for $x\leqslant q$.
\end{lem}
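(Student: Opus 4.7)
The plan is to drop the restriction $p \leqslant x$ (which is harmless since every term is positive), and then prove the classical estimate $\sum_{p \mid q} 1/p \ll \log\log\log q$ by a standard splitting argument at an intermediate threshold.

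First, I would pick a threshold $y$ to be chosen later, and write
\begin{equation*}
\sum_{p \mid q} \frac{1}{p} = \sum_{\substack{p \mid q \\ p \leqslant y}} \frac{1}{p} + \sum_{\substack{p \mid q \\ p > y}} \frac{1}{p}.
\end{equation*}
For the first piece I would simply bound the sum over primes dividing $q$ by the sum over all primes up to $y$ and apply Mertens' estimate to get $\sum_{p \leqslant y} 1/p \ll \log\log y$. For the second piece I would use that every term is at most $1/y$ and that the number of distinct prime divisors of $q$ satisfies the classical bound $\omega(q) \ll \log q / \log\log q$, so that
\begin{equation*}
\sum_{\substack{p \mid q \\ p > y}} \frac{1}{p} \leqslant \frac{\omega(q)}{y} \ll \frac{\log q}{y \log \log q}.
\end{equation*}

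Then I would choose $y = \log q$. The first piece contributes $\ll \log\log\log q$ and the second piece contributes $\ll (\log\log q)^{-1} = O(1)$. Combining, and recalling that the constraint $p \leqslant x$ only makes the sum smaller, yields the desired bound.

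The only point requiring mild care is the bound $\omega(q) \ll \log q/\log\log q$, which is standard (it follows for instance from the fact that the $k$-th primorial grows faster than any $q$ with $\omega(q) \geqslant k$, combined with the prime number theorem). There is no real obstacle here: the estimate is soft and the whole argument is a single line once the threshold $y = \log q$ is chosen.
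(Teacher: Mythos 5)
Your proof is correct. It differs from the paper's argument in a mild but genuine way: the paper bounds $\sum_{p \mid q} 1/p$ by replacing the actual prime divisors of $q$ with the first $\omega(q)$ primes (which only increases the sum of reciprocals), then applies Mertens once to get $\log\log p_{\omega(q)} \ll \log\log\log q$ after estimating $p_{\omega(q)} \ll \log q \log\log q$; you instead split the sum at the threshold $y = \log q$ and treat the two ranges separately, with Mertens on the small primes and a cardinality-times-trivial-bound argument on the large ones. Both routes are entirely standard and of comparable length. A small remark on your version: you invoke the sharper bound $\omega(q) \ll \log q / \log\log q$, but the cruder $\omega(q) \ll \log q$ (which the paper derives in one line from $\log q \geqslant \omega(q)\log 2$) already suffices, since with $y = \log q$ it still gives $\omega(q)/y \ll 1$; so you can avoid appealing to the prime number theorem entirely and keep the whole argument elementary.
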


\begin{proof}
Let $\omega(q)$ be the number of distinct prime divisors of $q$, and write the prime factorization of $q$ as $q=p_1^{\alpha_1}p_2^{\alpha_2} \cdots p_k^{\alpha_k}$, where each $p_n$ is a prime factor of $q$ ordered so that $p_1 < p_2 < \cdots < p_k$ and $k=\omega(q)$.
Then
\begin{equation}
    \log{q} = \sum_{n=1}^k \alpha_n \log{p_n} \geqslant k\log2 = (\log2)\omega(q)
\end{equation}
so that $\omega(q) \ll \log q$. Since $p_n \asymp n\log n$ by a classical estimate dating back to Chebyshev --- we more precisely know that $p_n \sim n \left(\log{n} + \log\log{n} - 1 \right)$ by Dusart \cite{dusart18} --- we deduce
\begin{equation}\label{p_omega_q}
p_{\omega(q)} \ll \omega(q)\log{\omega(q)} \ll \log(q)\log\log(q).
\end{equation}
We therefore deduce
\begin{equation}
\sum_{\substack{p \le x \\ p\mid q}} \dfrac1p \ll \sum_{n=1}^{\omega(q)} \dfrac1{p_n} \sim \log\log(p_{\omega(q)}) \ll \log\log\log(q)
\end{equation}
by Mertens estimate on the sum over reciprocals of primes.
\end{proof}

The problem therefore reduces to estimating the complete sum over primes
\begin{equation}
    \sum_{\substack{p \leqslant x \\ p \nmid q}} \frac{\lambda(p)^2 }{p} \rightsquigarrow \sum_{\substack{p \leqslant x}} \frac{\lambda(p)^2 }{p}.
\end{equation}

We use the Hecke relation $\lambda(p)^2 = \lambda(p^2) + 1$, and use Mertens estimate
\begin{equation}
    \sum_{p \leqslant x} \frac{1}{p} = \log \log x + O(1),
\end{equation}
with an absolute error term. Therefore, for $x \leqslant q$, we have
\begin{equation}\label{eq:sum_over_p}
    \sum_{\substack{p \leqslant x \\ p \nmid q}} \frac{\lambda(p)^2}{p} = \sum_{\substack{p \leqslant x}} \frac{\lambda(p^2) }{p} + \log\log x + O(\log\log\log(q)), 
\end{equation} 
which already displays the main term in Theorem \ref{thm:thm}, so that we are reduced to study the sum of coefficients $\lambda_f(p^2)$. Such a sum may be completed into the sum of coefficients of squares of prime powers at the cost of a uniformly bounded error term, and the sum thus obtained has size determined by $L(1, \mathrm{sym}^2 f)$ \textit{via} the Perron formula, see \cite[Lemma 2.11]{miller}. It is therefore sufficient to bound L-values, and we follow the same strategy as in \cite[Corollary 4.4]{cm} to do so. We state these bounds for general $L$-functions $\log L(s, f)$.

\begin{lem}
\label{lem:l-bound}
    Let $s = \sigma + it$ with $\sigma > \tfrac12$ and $|t| \leqslant 3c(f)$. Let $\sigma_0 \in (1/2, \sigma)$.
    Suppose $L(s,f)$ has no zeros in the rectangle $\{ z \ : \ \sigma_0 \leqslant \Re(z) \leqslant 1, \ |\Im(z) - t| \leqslant 3\}$. Then we have, uniformly in $f$, 
    \begin{equation}
        \log L(s, f) \ll \frac{\log c(f)}{\sigma-\sigma_0}.
    \end{equation}
\end{lem}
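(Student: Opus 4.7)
The plan is a classical Borel--Carathéodory argument, converting pointwise bounds on $|L(s,f)|$ (which give upper bounds on $\Re \log L$) into pointwise bounds on $\log L$ itself, using the zero-free (and pole-free) rectangle hypothesis to ensure $\log L(\cdot, f)$ is single-valued and holomorphic on the relevant domain.

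First I would establish two preliminary bounds. On the line $\Re(w) = 2$, the Dirichlet series $L(w,f)$ converges absolutely (using Deligne-type bounds $a_f(n) \ll n^\varepsilon$ on the coefficients), so $|\log L(2+it, f)| \ll 1$ uniformly in $f$ and $t$. Second, combining the polynomial bound at $\Re(w) = 2$ with the functional equation and Stirling at $\Re(w) = -1$, Phragmén--Lindelöf convexity yields $|L(w,f)| \ll c(f)^A$ throughout the rectangle, for some absolute constant $A$. Hence $\Re \log L(w,f) = \log |L(w,f)| \leq A\log c(f) + O(1)$ on the rectangle. Since the rectangle is zero-free by assumption (and $\log L$ is holomorphic in $\Re w > 1$ by the Euler product, using also that there is no pole for the relevant $L$-function), the function $g(w) := \log L(w, f) - \log L(2+it, f)$ is holomorphic on the disk $\{w : |w - (2+it)| \leq R\}$ with $R := 2 - \sigma_0$, which lies in the union of the rectangle and the region $\Re w > 1$; note $R \leq 3/2 < 3$ ensures the disk fits inside the vertical strip $|\Im w - t| \leq 3$.

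Now I would apply Borel--Carathéodory to $g$ on this disk. Since $g(2+it) = 0$ and $\Re g(w) \leq A\log c(f) + O(1)$ throughout the disk, the theorem gives, for $|w - (2+it)| \leq r$,
\begin{equation*}
    |g(w)| \leq \frac{2r}{R-r}\Big(\sup_{|w-(2+it)|=R} \Re g(w)\Big) \ll \frac{r}{R-r} \log c(f).
\end{equation*}
Choosing $r = 2 - \sigma$, so that $s = \sigma + it$ lies on the boundary of the inner disk, we get $R - r = \sigma - \sigma_0$ and thus $|g(s)| \ll \log c(f)/(\sigma - \sigma_0)$. Adding back $|\log L(2+it,f)| \ll 1$ from step one yields the claimed bound on $\log L(s,f)$.

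The main technical point is purely geometric: one must verify that the disk used in Borel--Carathéodory stays inside the holomorphic region for $\log L$. This forces the radius $R = 2 - \sigma_0 \leq 3/2$, which is why the hypothesis requires zero-freeness on a vertical window of height at least $2R > 3$ around $t$, matching the paper's condition $|\Im z - t| \leq 3$. The uniformity in $f$ is automatic once one tracks that every bound (absolute convergence at $\Re w = 2$, convexity constant $A$, Stirling for the gamma factors) depends only on the analytic conductor $c(f)$.
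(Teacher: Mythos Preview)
Your proposal is correct and follows essentially the same approach as the paper: both use absolute convergence at $\Re w = 2$, a convexity bound $L(z,f) \ll c(f)^{O(1)}$ on the large circle of radius $R = 2-\sigma_0$ centered at $2+it$, and then Borel--Carath\'eodory with inner radius $r = 2-\sigma$ to deduce the stated bound. Your write-up is in fact slightly more careful than the paper's in spelling out the geometric constraint that the disk of radius $R \leqslant 3/2$ stays inside the zero-free window $|\Im z - t| \leqslant 3$, which explains the choice of that constant in the hypothesis.
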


\begin{proof}
    For $\sigma > 2$, then we have $\log L(s,f) \ll 1$ uniformly in $f$, by Deligne's bound $a_f(n) \ll 1$ and by absolute convergence. 

    Assume $\sigma \leqslant 2$ and follow the strategy of Granville-Soundararajan \cite{gs}; they prove the analogous result for Dirichlet $L$-functions, but the result is general as used for instance in \cite[Lemma 2.3]{cm}, unproven there. Consider circles of center $2+it$ and radii $r := 2-\sigma$, so that they pass through $s$. 
    On the larger circle of radius $R := 2-\sigma_0$, we have for all $z$ on the circle, 
    \begin{equation}
    \label{xxx}
        L(z,f) \ll c(f) |z| \ll c(f)^2.
    \end{equation}
This follows from convexity bounds \cite[Lemma 6.7 and Theorem 6.8]{steuding}, which states that $L$-values in the critical strip are bounded by $L(z,f) \ll (|z|^2 c(f)^2)^{1-\sigma}$ for all $\sigma \in (0,1)$, uniformly in $f$. We get the bound \eqref{xxx}, for $\sigma \in (1/2, 1)$, the worst case being $\sigma = 1/2$.

For $s$ in the rectangle $\{ z \ : \ \sigma_0 \leqslant \Re(z) \leqslant 1, \ |\Im(z) - t| \leqslant 3\}$, we use the Borel-Carath\'eodory theorem to obtain
\begin{align*}
    |\log L(s,f)| & \leqslant \frac{2r}{R-r} \max_{|z-(2+it)| = R} \Re \log L(z,f) + \frac{R+r}{R-r} |\log L(2+it, f)| \ll \frac{\log c(f)}{\sigma - \sigma_0} + \frac{1}{\sigma-\sigma_0}
\end{align*}
which is as desired.
\end{proof}

Assuming strong zero-free regions, typically implied by the generalized Riemann hypothesis, we can approximate $L$-functions by short sums of coefficients, with an error term depending on the chosen length. The following lemma makes it precise.
\begin{lem}
    Let $s= \sigma + it$ with $\sigma > 1/2$ and $|t| \leqslant 2c(f)$. Let $y \geqslant 2$ a real parameter and $\sigma_0 \in (1/2, \sigma)$. Suppose there are no zeros of $L(z, f)$ in the rectangle $\{z \ : \ \sigma_0 \leqslant \Re(z) \leqslant 1, \ |\Im(z) - t| \leqslant y+ 3 \}$. Let $\sigma_1 = \min(\tfrac12(\sigma+\sigma_0), \sigma_0 + 1/\log(y))$. We then have
    \begin{equation}
        \log L(s,f) = \sum_{n=2}^y \frac{\Lambda(n) a_f(n)}{n^s \log n} + O\left( \frac{\log c(f)}{(\sigma_1 - \sigma_0)^2} y^{\sigma_1-\sigma} \right), 
    \end{equation}
    where the implied constant is independent of $f$.
\end{lem}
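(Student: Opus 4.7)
My plan is to extract the truncated Dirichlet expansion of $\log L(s,f)$ by Perron-type contour integration, using the previous Lemma~\ref{lem:l-bound} as the only analytic input.

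Starting from the identity
\begin{equation*}
\log L(w,f) = \sum_{n \geq 2} \frac{\Lambda_f(n)}{n^w \log n}, \qquad \Re(w) > 1,
\end{equation*}
obtained by integrating the logarithmic derivative \eqref{log-derivative}, I apply a smoothed Perron formula: for any $c > \max(0, 1-\sigma)$,
\begin{equation*}
\sum_{n \leq y} \frac{\Lambda_f(n)}{n^s \log n} = \frac{1}{2\pi i} \int_{(c)} \log L(s+w,f)\, y^w\, \frac{dw}{w},
\end{equation*}
up to a controlled smoothing error. I then shift the contour leftward past the simple pole at $w = 0$ (with residue $\log L(s,f)$) to $\Re(w) = \sigma_1 - \sigma$, producing
\begin{equation*}
\log L(s,f) = \sum_{n \leq y} \frac{\Lambda_f(n)}{n^s \log n} \; - \; \frac{1}{2\pi i} \int_{(\sigma_1 - \sigma)} \log L(s+w, f)\, \frac{y^w}{w}\, dw.
\end{equation*}

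On the shifted line, $\Re(s+w) = \sigma_1 > \sigma_0$ and $|\Im(s+w) - t| = |\Im w|$, so provided $|\Im w| \leq y+3$ the assumed zero-free rectangle is valid and Lemma~\ref{lem:l-bound} yields $\log L(s+w,f) \ll \log c(f)/(\sigma_1 - \sigma_0)$ uniformly in $f$. Combined with the factor $y^{\sigma_1-\sigma}/|w|$ from the Perron kernel, and noting that the specific choice $\sigma_1 = \min(\tfrac12(\sigma+\sigma_0),\sigma_0 + 1/\log y)$ forces $y^{\sigma_1 - \sigma_0} \leq e$ while making $\sigma_1 - \sigma_0 \asymp \min(\sigma-\sigma_0,1/\log y)$, the balance of height truncation against the size of $|w|$ near the pole yields one additional $(\sigma_1 - \sigma_0)^{-1}$ from the integral; together this gives the stated bound $\log c(f)\, y^{\sigma_1-\sigma}/(\sigma_1 - \sigma_0)^2$. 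The tails $|\Im w| > y$ are handled either by the exponential decay of a smooth Mellin kernel such as $\Gamma(w)y^w$, or by using the convexity bound on $\log L$ together with a height truncation; in either case their contribution is absorbed into the displayed error.

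To conclude, I replace $\Lambda_f(n)$ by $\Lambda(n) a_f(n)$ in the main sum: these coincide on primes since $a_f(p) = \alpha_f(p) + \beta_f(p)$, while on prime powers $n = p^\nu$ with $\nu \geq 2$ Deligne's bound bounds the difference $\Lambda_f(n) - \Lambda(n) a_f(n)$ by $\log p$, so that the total discrepancy $\sum_{p, \nu \geq 2} \log p/(p^{\nu/2} \log p^{\nu})$ converges and is absorbed into the error. The main technical obstacle is controlling the integrand on the horizontal boundary $|\Im w| \approx y+3$, where we exit the assumed zero-free region; choosing a smoothed Mellin kernel with exponential decay in $|\Im w|$ sidesteps this, ensuring that only the portion well inside the zero-free region contributes at leading order and that uniformity in $f$ is preserved throughout.
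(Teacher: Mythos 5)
Your proposal follows the same route as the paper: a Perron-type representation of the partial Dirichlet sum, a leftward contour shift past the simple pole at $w=0$ picking up the residue $\log L(s,f)$, and Lemma~\ref{lem:l-bound} to bound the shifted integral by $\log c(f)/(\sigma_1-\sigma_0)$ pointwise, giving the stated error after integration. The paper uses the \emph{truncated} Perron formula with finite height $y$, producing two horizontal segments at $\Im w=\pm y$ and one vertical segment at $\Re w=\sigma_1-\sigma$, all of which are bounded in the same way.

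One small clarification, since you describe the horizontal boundary as the place ``where we exit the assumed zero-free region'' and then reach for a smoothed Mellin kernel to sidestep it: this is not actually the obstacle, and it is precisely why the hypothesis takes the height $y+3$ rather than $y$. Lemma~\ref{lem:l-bound} at a point $s+w$ needs a zero-free rectangle of half-height $3$ centered at $\Im(s+w)=t+\Im w$; since $|\Im w|\leqslant y$ on the truncated contour, that rectangle is contained in $|\Im z - t|\leqslant y+3$, which is exactly what is assumed. So the horizontal segments are fully inside the zero-free region and the unsmoothed truncated Perron formula suffices. Your alternative of a smooth kernel (e.g.\ $\Gamma(w)y^w$) would also work but changes the shape of the error term and is not needed. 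Finally, your explicit bookkeeping of the passage from $\Lambda_f(n)$ (the coefficients of $-L'/L$) to $\Lambda(n)a_f(n)$, which coincide at primes and differ boundedly at higher prime powers by Deligne, is a detail the paper elides and is a welcome addition.
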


\begin{proof}
    By the truncated Perron formula given in \cite[Corollary 5.3]{mv} or \cite[Example 4.4.15]{murty}, we can express the above short sum by a vertical integral of the $L$-function. For $c = 1-\sigma + 1/\log y$, since $L(s,f)$ is entire,  we have
    \begin{align}
        \frac{1}{2\pi i} \int_{c-iy}^{c+iy} \log L(s+w, f) \frac{y^w}{w} dw & = \sum_{n = 2}^y \frac{\Lambda(n) a_f(n)}{n^s \log n} + O\left( \frac{1}{y} \sum_{n \geqslant 1} \frac{y^c}{n^{\sigma + c}} \frac{1}{|\log(y/n)|} \right) \\
        & = \sum_{n = 2}^y \frac{\Lambda(n) a_f(n)}{n^s \log n} + O\left( y^{-\sigma} \log y \right).
    \end{align}

    We can on the other hand move the integration line from $\Re = c$ to $\Re = \sigma_1 - \sigma < 0$. We exactly assumed that there are no zero in the rectangle thus crossed, i.e. we pick no singularities, except $w = 0$ where there is a simple pole with residue $\log L(s,f)$. The remaining integrals after picking up this $L$-value are three segments of the form
    \begin{equation}
        \int \log L(s+w, f) \frac{y^w}{w}dw, 
    \end{equation}
    on the segments $[c\pm iy, \sigma_1-\sigma \pm iy]$ and $[\sigma_1 - \sigma - iy, \sigma_1 - \sigma + iy]$.
    By the previous Lemma \ref{lem:l-bound} and the assumptions on $\Re(w) \geqslant \sigma_1-\sigma$, these integrals are bounded by
    \begin{equation}
        \ll \int \frac{\log c(f)}{\sigma + \Re(w) - \sigma_0} \frac{y^w}{w} dw \ll \frac{\log c(f)}{(\sigma_1 - \sigma_0)^2} y^{\sigma_1 - \sigma}. 
    \end{equation}
    This ends the proof of the lemma.
\end{proof}

We can now instantiate the above lemma with suitable choices of variables to obtained the desired bound: 
\begin{lem}
\label{lem:L-bound}
    Let $\eta > 2(\log c(f))^{-1}$. Assume that $L(s,f)$ has no zeros in the rectangle $\Re(s) \in [1-\eta, 1]$ and $|\Im(s)| \leqslant \log^{10/\eta}\! c(f)$ --- this is for instance true with any $0<\eta<1/2$ when assuming the Generalized Riemann Hypothesis. Then
    \begin{equation}
        \log L(s,f) \ll \log\log\log c(f)
    \end{equation}
    uniformly for $\Re(s) \geqslant 1-1/\log\log c(f)$ and $|\Im(s)| \leqslant \log^{10}c(f)$.
\end{lem}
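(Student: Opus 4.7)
The plan is to apply the preceding approximation lemma with carefully chosen parameters $\sigma_0$ and $y$, and then estimate the resulting short sum and the error term. Specifically, I would take $\sigma_0 = 1-\eta$ and truncation length $y = \tfrac12 \log^{10/\eta} c(f)$; the assumed zero-free rectangle then contains the one required by the approximation lemma, namely $\{z : \sigma_0 \leqslant \Re(z) \leqslant 1,\ |\Im(z)-t|\leqslant y+3\}$, since $|t|+y+3 \leqslant \log^{10}c(f) + \tfrac12\log^{10/\eta} c(f) + 3 \leqslant \log^{10/\eta} c(f)$ for $\eta$ bounded away from~$1$ and $c(f)$ large (the edge case $\eta$ close to $1$ is handled with smaller $y$).

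Next I would set $\sigma_1 = \sigma_0 + 1/\log y$, which realizes the minimum in the lemma. Direct computation gives $\sigma_1-\sigma_0 = 1/\log y \asymp \eta/\log\log c(f)$ and, since $1-\sigma \leqslant 1/\log\log c(f)$ by the range of $s$,
\[
\sigma_1 - \sigma \;=\; -\eta + \frac{1}{\log y} + (1-\sigma) \;\leqslant\; -\eta(1-o(1)).
\]
The error term $O\bigl(\log c(f)\cdot (\sigma_1-\sigma_0)^{-2} y^{\sigma_1-\sigma}\bigr)$ is then $o(1)$, since $y^{-\eta(1-o(1))} \leqslant \log^{-10(1-o(1))}c(f)$ overwhelms the polynomial factor $\log c(f) \cdot (\log\log c(f))^2/\eta^2$.

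It remains to bound the explicit short sum $\sum_{n=2}^y \Lambda(n) a_f(n)/(n^s\log n)$. Prime powers $n=p^k$ with $k\geqslant 2$ contribute $O(1)$ by Deligne's bound $|a_f(n)|\ll 1$ and absolute convergence. The prime contribution is $\sum_{p\leqslant y} a_f(p)/p^s$, bounded in absolute value by $2\sum_{p\leqslant y} p^{-\sigma}$. The key observation is that, for $p\leqslant y$ and $\sigma\geqslant 1-1/\log\log c(f)$,
\[
p^{1-\sigma} \;\leqslant\; e^{\log y/\log\log c(f)} \;=\; e^{10/\eta} \;=\; O_\eta(1),
\]
so by Mertens' estimate $\sum_{p\leqslant y} p^{-\sigma} \ll_\eta \sum_{p\leqslant y}1/p \ll \log\log y \ll \log\log\log c(f)$, yielding the desired conclusion.

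The main obstacle is the balance in the choice of $y$: it must be large enough for the error term to benefit from the zero-free region, yet small enough on the logarithmic scale (relative to $\log\log c(f)$) for the prime sum to retain size $\log\log\log c(f)$. This balance is precisely the reason for the specific exponent $10/\eta$ in the zero-free hypothesis, and it breaks down if $\eta$ is allowed to decay too fast with $c(f)$; the hypothesis $\eta > 2/\log c(f)$ is the minimal regime, and the implied constant in the conclusion is permitted to depend on $\eta$—a mild restriction since under GRH one applies the lemma with $\eta$ any fixed constant less than $1/2$.
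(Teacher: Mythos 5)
Your proposal matches the paper's proof: both set $\sigma_0 = 1-\eta$, choose $y$ of size $\log^{10/\eta} c(f)$ (the paper takes exactly $y=\log^{10/\eta}c(f)$, you take half of it to be safe about the rectangle containment, a harmless variation), note that the Perron error term is $o(1)$ because $y^{\sigma_1-\sigma}\ll (\log c(f))^{-10(1-o(1))}$ overwhelms the polynomial factors, discard prime powers via Deligne's bound, and estimate the prime sum by $\sum_{p\le y}p^{-\sigma}\ll e^{10/\eta}\log\log y\ll_\eta\log\log\log c(f)$. Your explicit observation that $p^{1-\sigma}\le e^{10/\eta}$ and that the implied constant depends on $\eta$ makes precise what the paper leaves implicit, but the argument is the same.
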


\begin{proof}
    Apply the above with $\sigma_0 = 1-\eta$, $\sigma = \Re(s) \geqslant 1-1/\log\log  c(f)$, and $\sigma - \sigma_0 \geqslant \sigma - \sigma_1 \geqslant \eta/2$. Choose the value $y = \log^{10/\eta}\! c(f)$. Use Deligne's bounds $|\Lambda_f(p^\alpha)| \ll \log p$ to remove higher powers of primes in the sum, corresponding to a bounded contribution, and get
    \begin{equation}
        |\log L(s,f)| \leqslant \left| \sum_{p=2}^y \frac{\Lambda(p) a_f(p)}{p^s \log p} \right| + O(1) \ll \sum_{p=2}^{\log^{10/\eta}\! c(f)} \frac{1}{p^{1-1/\log\log  c(f)}} \ll \log\log\log c(f).
    \end{equation}
    This is the claimed result.
\end{proof}

We obtain Proposition \ref{lem:2-part-mt-single} by using  Perron formula to relate the sum of $\lambda_f(p^2)$ to~$L(1, \mathrm{sym}^2 f)$, and Lemma \ref{lem:L-bound} for $\mathrm{sym}^2 f$ at $s=1$ to bound this $L$-value.
By averaging Proposition \ref{lem:2-part-mt-single} over the family, we obtain the following estimate for the $2$-parts.

\begin{coro}
\label{lem:2-part-mt}
Assume the Generalized Riemann Hypothesis for $L(s, \mathrm{sym}^2 f)$.
For all $Q^\delta \ll x \ll Q$, we have 
\begin{equation}
    \frac{1}{N(Q)} \sum_{q \geqslant 1} \Psi\left(\frac{q}{Q}\right) \sumh_{f \in H_k(q)} \sum_{\substack{p \leqslant x \\ p\nmid q}} F(p,2) = \log\log x + O(\log\log\log x),
\end{equation}
where the implied constant is absolute.
\end{coro}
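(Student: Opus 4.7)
The plan is to deduce the corollary immediately from the pointwise estimate in Proposition \ref{lem:2-part-mt-single} by averaging, since the main term there is independent of $f$ and $q$.

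First, I apply Proposition \ref{lem:2-part-mt-single} uniformly to each $f \in H_k(q)$ for $q$ in the support of $\Psi(\cdot/Q)$. Under the assumed generalized Riemann hypothesis for $L(s,\mathrm{sym}^2 f)$, this provides
\begin{equation*}
\sum_{\substack{p < x \\ p \nmid q}} F(p,2) = \log\log x + O(\log\log\log c(f)),
\end{equation*}
with an absolute implied constant, valid whenever $x < c(f) = k^2 q$. Since $\Psi$ is compactly supported, $q \asymp Q$ on its support, and the assumption $x \ll Q$ with a sufficiently small implicit constant ensures $x < c(f)$ for every $f$ contributing to the sum (recall $k \geqslant 2$ is fixed).

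Next, I insert this pointwise identity into the weighted harmonic average. Because the leading term $\log\log x$ does not depend on $f$ or $q$, it factors out and is multiplied by
\begin{equation*}
\frac{1}{N(Q)} \sum_{q \geqslant 1} \Psi\!\left( \frac{q}{Q}\right) \sumh_{f \in H_k(q)} 1 = 1,
\end{equation*}
which is exactly the definition of $N(Q)$ from \eqref{smooth-cardinality}. For the error contribution, the same compact-support argument yields $\log\log\log c(f) = \log\log\log(k^2 q) = \log\log\log Q + O(1)$, and the range $Q^\delta \ll x \ll Q$ gives in turn $\log\log\log Q = \log\log\log x + O(1)$. Hence the pointwise error is $O(\log\log\log x)$ uniformly in $f$ and $q$, and averaging a uniform bound against positive weights produces the same bound.

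There is no real obstacle at this averaging step: the work is entirely concentrated in Proposition \ref{lem:2-part-mt-single}, whose statement was crafted precisely to make the implicit constants uniform in $f$ and $q$. No trace formula or further $L$-value input is needed to pass from the pointwise estimate to its averaged form.
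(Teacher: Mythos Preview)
Your proposal is correct and follows exactly the paper's approach: the paper simply states that the corollary is obtained ``by averaging Proposition~\ref{lem:2-part-mt-single} over the family,'' and you have spelled out precisely this averaging step, including the checks that $x<c(f)$ on the support of $\Psi$ and that $\log\log\log c(f)=\log\log\log x+O(1)$ in the given range.
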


\subsection{Case splitting and reduction to powers one}
\label{subsec:splitting}

Recalling the definition of $F(p,a)$, the expression \eqref{eq:expanded-moment} splits into sums of the type
\begin{equation}
\label{s}
\frac{1}{N(Q)} \sum_{q \geqslant 1} \Psi\left( \frac{q}{Q} \right) \sumh_{f \in H_k(q)} \sum_{\substack{p_1, \ldots, p_\ell \leqslant x \\ p_i\nmid q \\ p_i \neq p_j}} \prod_{i=1}^\ell F(p_i, \alpha_i)
\end{equation}
so that it is sufficient to study these.
We split into different cases according to the number of conspiring primes (i.e. the size of the powers $\alpha_i$), and use the lemmas established in Sections \ref{subsec:sums-over-primes} and \ref{subsec:rankin-bound} to treat each part. Define the following cases:
\begin{itemize}
\item[] Case A --- each power $\alpha_i$ is $2$;
\item[] Case B --- each power is at least $2$, at least one is larger;
\item[] Case C --- at least one power is $1$, but not all;
\item[] Case D --- each power is $1$.
\end{itemize}
To prove Theorem \ref{prop:moments}, we proceed by induction on the number of terms $\ell$ in the product. The remainder of Section \ref{subsec:splitting} is dedicated to treating the cases A, B and C or to reduce them to case~D, which will be addressed in Section \ref{sec:only-ones}.

\subsubsection*{Case A: each power is $2$}

By the estimate of Lemma \ref{lem:2-part-mt}, we have
\begin{equation}\label{eq:Case1-base}
\sum_{\substack{p \leqslant x \\ p\nmid q}} F(p,2) = \sum_{\substack{p \leqslant x \\ p\nmid q}} \frac{a_f(p)^2}{p} = \log\log(x) + O(\log\log\log c(f)), 
\end{equation}
which corresponds to the situation $\ell = 1$ in Case A.
Let $\ell \geqslant 1$ and assume inductively that for all $l=1,2,\ldots,\ell$, we have
\begin{equation}\label{eq:Case1-induct}
\frac{1}{N(Q)} \sum_{q \geqslant 1} \Psi\left(\frac{q}{Q}\right) \sumh_{f \in H_k(q)}  \sum_{\substack{p_1, \ldots, p_l \leqslant x \\ p_i\nmid q \\ p_i \neq p_j}} \prod_{i=1}^l F(p_i,2) =  (\log\log(x))^l + o((\log\log(x))^l).
\end{equation}
We will prove the corresponding property for $\ell + 1$.
Adding back the missing primes in order to complete one of the sums over the primes $ p = p_{\ell  +1}$, we get
\begin{align}
& \sum_{\substack{p_1, \ldots, p_{\ell+1} \leqslant x \\ p_i\nmid q \\ p_i \neq p_j}} \prod_{i=1}^{\ell+1} F(p_i,2)
= \sum_{\substack{p_1, \ldots, p_{\ell} \leqslant x \\ p_i\nmid q \\ p_i \neq p_j}} \prod_{i=1}^{\ell} F(p_i,2) \sum_{\substack{p\leqslant x \\ p\nmid q \\ p\neq p_i,\, 1\le i\le\ell}} F(p,2) \notag\\
& \qquad = \Bigg(\sum_{\substack{p_1, \ldots, p_{\ell} \leqslant x \\ p_i\nmid q \\ p_i \neq p_j}} \prod_{i=1}^{\ell} F(p_i,2)\Bigg) \Bigg(\sum_{\substack{p\leqslant x \\ p\nmid q}} F(p,2)\Bigg) - \ell \Bigg(\sum_{\substack{p_2, \ldots, p_{\ell} \leqslant x \\ p_i\nmid q \\ p_i \neq p_j}} \prod_{\substack{i=2}}^{\ell} F(p_i,2)\Bigg)\Bigg(\sum_{\substack{p\leqslant x \\ p\nmid q}} F(p,4)\Bigg). \label{eq:power2_add_back}
\end{align}
 By Lemma \ref{lem:large-parts}, the sum including $F(p,4)$ in \eqref{eq:power2_add_back} is uniformly bounded, so that the rightmost term in \eqref{eq:power2_add_back} falls into the induction hypothesis and is bounded by $(\log\log(x))^\ell = o(\log\log(x))^{\ell+1}$.   In the left term of \eqref{eq:power2_add_back}, we average over the family and use Hölder inequality as in \cite{miller}, as well as induction to conclude it is equivalent to $\log \log(x)^{\ell + 1}$, proving the case for $\ell + 1$ and finishing the proof by induction.
Thus
\begin{align}
\frac{1}{N(Q)} \sum_{q \geqslant 1} \Psi\left(\frac{q}{Q}\right) \sumh_{f \in H_k(q)} \sum_{\substack{p_1, \ldots, p_{\ell+1} \leqslant x \\ p_i\nmid q \\ p_i \neq p_j}} \prod_{i=1}^{\ell} F(p_i,2) \sim (\log\log(x))^{\ell}
\end{align}
for any $\ell\in\mathbb{N}^\star$ by induction. 

It remains to estimate the contribution of these terms corresponding to the powers $\alpha_i = 2$ in the~$k$-th moment of $P(f,x)$. The $k$ primes can be paired into $k/2$ such squares, giving the contribution of$(\log\log(x))^{k/2}$ in Theorem \ref{prop:moments}. The number of such pairings can be obtained as follows: select~$k/2$ primes, then pair each of them with one of the $k/2$ remaining primes (which has~$(k/2)!$ possibilities), and notice that every pairing has been counted twice because of the possible swaps~$p_i \leftrightarrow p_j$ (which are $2^{k/2}$), so that the total number of such contributions falling into Case~A is
\begin{equation}
    \binom{k}{k/2} \frac{(k/2)!}{2^{k/2}} = \frac{k!}{(k/2)! 2^{k/2}} = \frac{1}{\sqrt{2\pi}} \int_{\mathbb{R}} x^k e^{-x^2/2} dx, 
\end{equation}
and we exactly recover the $k$-th moment of the normal law, justifying the main contribution in Theorem \ref{prop:moments}. 

\subsubsection*{Case B: each power is at least $2$, at least one being larger than $2$}

By Lemma \ref{lem:large-parts}, we have
\begin{equation}\label{eq:Case2-1}
\sum_{\substack{p \leqslant x \\ p\nmid q}} F(p,\alpha) = O(1)
\end{equation}
whenever $\alpha\ge3$, the underlying constant being absolute. Equation \eqref{eq:Case2-1} immediately implies that this holds on average over the family, in particular
$$ \frac{1}{N(Q)} \sum_{q \geqslant 1} \Psi\left(\frac{q}{Q}\right) \sumh_{f \in H_k(q)} \sum_{\substack{p\leqslant x \\ p\nmid q}} F(p,\alpha) = o\left(\log\log(x)\right). $$

Let $\ell \geqslant 1$ and assume inductively that for all $l=1,2,\ldots,\ell$, we have
\begin{equation}\label{eq:Case2-induct}
\sum_{\substack{p_1, \ldots, p_l \leqslant x \\ p_i\nmid q \\ p_i \neq p_j}} \prod_{i=1}^l F(p_i, \alpha_i) = o((\log\log(x))^l),
\end{equation}
where $\alpha_i\ge2$ for all $i=1,2,\ldots,l$, and there is at least one $j\in\{1,2,\ldots,l\}$ such that $\alpha_j\ge3$. We address the case of $\ell+1$ factors. Reordering such that at least one $j\in\{1,2,\ldots,\ell\}$ satisfies $\alpha_j\ge3$, and adding the missing primes in order to complete the sum over primes $p_{\ell + 1}$, we can write
\begin{align}
& \sum_{\substack{p_1, \ldots, p_{\ell+1} \leqslant x \\ p_i\nmid q \\ p_i \neq p_j}} \prod_{i=1}^{\ell+1} F(p_i, \alpha_i)
= \sum_{\substack{p_1, \ldots, p_{\ell} \leqslant x \\ p_i\nmid q \\ p_i \neq p_j}} \prod_{i=1}^{\ell} F(p_i, \alpha_i) \sum_{\substack{p_{\ell+1} \leqslant x \\ p_{\ell+1} \nmid q \\ p_{\ell+1} \neq p_i,\, 1\le i\le\ell}} F(p_{\ell+1},\alpha_{\ell+1}), \notag\\
&\qquad = \Bigg(\sum_{\substack{p_1, \ldots, p_{\ell} \leqslant x \\ p_i\nmid q \\ p_i \neq p_j}} \prod_{i=1}^{\ell} F(p_i, \alpha_i)\Bigg) \Bigg(\sum_{\substack{p_{\ell+1} \leqslant x \\ p_{\ell+1} \nmid q}} F(p_{\ell+1} ,\alpha_{\ell+1})\Bigg) \label{eq:caseB_first}\\
&\qquad \qquad- \sum_{m=1}^\ell \Bigg(\sum_{\substack{p_1, \ldots, p_{m-1}, p_{m+1}, \ldots, p_{\ell} \leqslant x \\ p_i\nmid q \\ p_i \neq p_j}} \prod_{\substack{i=1 \\ i\neq m}}^{\ell} F(p_i, \alpha_i)\Bigg)\Bigg(\sum_{\substack{p_{\ell+1} \leqslant x \\ p_{\ell+1} \nmid q}} F(p_{\ell+1},\alpha_{\ell+1}+\alpha_m)\Bigg), \label{eq:caseB_second}
\end{align}
Examining \eqref{eq:caseB_second}, since $\alpha_{\ell+1} + \alpha_m \geqslant 3$, we can use the bound given by Lemma \ref{lem:large-parts} to conclude that the corresponding sum is uniformly bounded; the induction hypothesis therefore applies ($\alpha_j \geqslant 3$) to the other factor in \eqref{eq:caseB_second} and allows to conclude.  Examining \eqref{eq:caseB_first}, since $\alpha_{\ell+1} \geqslant~2$, we have that the sum over $p_{\ell+1}$ is uniformly bounded by $\log\log x$ by Lemma \ref{lem:2-parts}; the induction hypothesis therefore applies to the first factor in \eqref{eq:caseB_first} and allows to conclude that it is $o(\log \log (x))^{\ell/2}$, so that the whole expression is $o(\log \log(x)^{\ell+1})$.
This concludes the induction for Case B, and  we have
\begin{align*}
\frac{1}{N(Q)} \sum_{q \geqslant 1} \Psi\left(\frac{q}{Q}\right) \sumh_{f \in H_k(q)} \sum_{\substack{p_1, \ldots, p_{\ell} \leqslant x \\ p_i\nmid q \\ p_i \neq p_j}} \prod_{i=1}^{\ell} F(p_i, \alpha_i) = o\left((\log\log(x))^{\ell}\right)
\end{align*}
for any $\ell\in\mathbb{N}^\star$.

\subsubsection*{Case C: at least one power is $1$, but not all}
We now reduce Case C to Case D appealing to induction. Let $\ell \geqslant 2$. We assume inductively that, for all $l=2,\ldots,\ell$ such that at least one power $\alpha_i \geqslant 2$, we have
\begin{equation}\label{eq:Case4_induct}
\begin{aligned}
&\frac{1}{N(Q)} \sum_{q \geqslant 1} \Psi\left(\frac{q}{Q}\right) \sumh_{f \in H_k(q)}
\sum_{\substack{p_1, \ldots, p_l \leqslant x \\ p_i\nmid q \\ p_i \neq p_j}} \prod_{i=1}^l F(p_i, \alpha_i) \\
&\quad\ll \frac{(\log\log(x))^{l-n_l}}{N(Q)} \Bigg|\sum_{q \geqslant 1} \Psi\left(\frac{q}{Q}\right) \sumh_{f \in H_k(q)} \sum_{\substack{p_1, \ldots, p_{n_l} \leqslant x \\ p_i\nmid q \\ p_i \neq p_j}} \prod_{i=1}^{n_l} F(p_i,1)\Bigg|.
\end{aligned}
\end{equation}
where $n_l := \#\{j=1,2,\ldots,l \mid \alpha_j=1\}$ and we ordered the $\alpha_i$'s so that $\alpha_1 = \alpha_2 = \cdots = \alpha_{n_l} = 1$ and~$\alpha_{n_l+1}, \alpha_{n_l+2}, \ldots, \alpha_l \geqslant 2$.
Adding the missing primes to complete the sum over $p_{\ell+1}$, we have
\begin{align}
& \sum_{\substack{p_1, \ldots, p_{\ell+1} \leqslant x \\ p_i\nmid q \\ p_i \neq p_j}} \prod_{i=1}^{\ell+1} F(p_i, \alpha_i)
= \sum_{\substack{p_1, \ldots, p_{\ell} \leqslant x \\ p_i\nmid q \\ p_i \neq p_j}} \prod_{i=1}^{\ell} F(p_i, \alpha_i) \sum_{\substack{p_{\ell+1}\leqslant x \\ p_{\ell+1} \nmid q \\ p_{\ell+1} \neq p_i,\, 1\le i\le\ell}} F(p_{\ell+1},\alpha_{\ell+1}) \notag\\
&\qquad = \Bigg(\sum_{\substack{p_1, \ldots, p_{\ell} \leqslant x \\ p_i\nmid q \\ p_i \neq p_j}} \prod_{i=1}^{\ell} F(p_i, \alpha_i)\Bigg) \Bigg(\sum_{\substack{p_{\ell+1}\leqslant x \\ p_{\ell+1}\nmid q}} F(p_{\ell+1},\alpha_{\ell+1})\Bigg) \label{case4_1}\\
&\qquad \qquad- \sum_{m=1}^{\ell} \Bigg(\sum_{\substack{p_1, \ldots, p_{m-1}, p_{m+1}, \ldots, p_{\ell} \leqslant x \\ p_i\nmid q \\ p_i \neq p_j}} \prod_{\substack{i=1 \\ i\neq m}}^{\ell} F(p_i, \alpha_i)\Bigg) \Bigg(\sum_{\substack{p_{\ell+1}\leqslant x \\ p_{\ell+1}\nmid q}} F(p_{\ell+1},\alpha_{\ell+1}+\alpha_m)\Bigg) \label{case4_2}
\end{align}
The sum of $F(p_{\ell+1},\alpha_{\ell+1})$ in \eqref{case4_1} is uniformly dominated by $\log\log(x)$ by Lemma \ref{lem:2-parts} since $\alpha_{\ell+1} \geqslant 2$. 
The sum of $F(p_{\ell+1},\alpha_{\ell+1}+\alpha_m)$ in \eqref{case4_2} is uniformly bounded by Lemma \ref{lem:large-parts} since $\alpha_{\ell+1} + \alpha_m, \geqslant 3$.
The multiple sums of $F(p_i, \alpha_i)$ in \eqref{case4_1} and \eqref{case4_2} either contain a term $\alpha_i\ge2$, in which case we appeal to the induction hypothesis, or only contain terms $\alpha_i=1$, which is Case D.
This completes the induction reducing Case C to Case D.

\section{Case D and consequences}
\label{sec:only-ones}

The above sections reduced the proof of Theorem \ref{prop:moments} to the proof of case D, where each power is~$\alpha_i = 1$. We treat this case and deduce important consequences from this result.

\subsection{First moment of coefficients}
This case is the most difficult and requires to make use of the harmonic sum over the family, i.e. trace formulas. We have to prove the following bound on the $1$-parts to prove they contribute as an error term:
\begin{prop}
\label{prop:first-moments}
For all $\ell \geqslant 1$, we have
\begin{equation}
\label{eq:case5}
\frac{1}{N(Q)} \sum_{q \geqslant 1} \Psi\left(\frac{q}{Q}\right) \sumh_{f \in H_k(q)} \sum_{\substack{p_1, \ldots, p_{\ell} \leqslant x \\ p_i\nmid q \\ p_i \neq p_j}} \prod_{i=1}^\ell F(p_i, 1) = o\left((\log \log(x))^{\ell/2}\right).
\end{equation}
\end{prop}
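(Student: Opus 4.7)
The plan is to exploit the multiplicativity of Hecke eigenvalues to collapse the product $\prod_i a_f(p_i)$ into a single Fourier coefficient $a_f(n)$, and then to apply the trace formulas of Section~\ref{subsubsec:ktf} to extract cancellation from the harmonic family average. Since the primes $p_1,\ldots,p_\ell$ are pairwise distinct and each coprime to $q$, Hecke multiplicativity gives
$$\prod_{i=1}^\ell a_f(p_i) = a_f(n), \qquad n := p_1 \cdots p_\ell,$$
so that $n$ is a squarefree integer at least $2$ and coprime to $q$. Swapping the order of summation, \eqref{eq:case5} reduces to an average of $\Delta^\star_q(1, n)$ weighted by $1/\sqrt{n}$, taken over $\ell$-tuples of distinct primes and smoothed by $\Psi(q/Q)$.

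I would then apply the sieving Lemma~\ref{lem:sieve} with $m=1$ to express $\Delta^\star_q(1, n)$ as a weighted sum of $\Delta_d(e^2, n)$ for divisors $d \mid q$, and feed the Petersson trace formula (Proposition~\ref{prop:ptf}) into each of these. The key observation is that the diagonal $\delta_{e^2, n}$ \emph{vanishes identically}: $n$ being squarefree and at least~$2$ cannot equal the perfect square $e^2$. Only the off-diagonal Kloosterman contribution survives, so the problem reduces to bounding the smoothed average
$$\sum_{q \geq 1} \Psi\!\left(\frac{q}{Q}\right) \sum_{d \mid q} (\text{sieving weights}) \sum_{\substack{c \geq 1 \\ d \mid c}} \frac{S(e^2, n, c)}{c} J_{k-1}\!\left( \frac{4\pi e \sqrt{n}}{c}\right),$$
summed further against $1/\sqrt{n}$ over the $\ell$-tuples of distinct primes $p_i \leq x$. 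For $c$ much larger than $e\sqrt n$ the bound $J_{k-1}(y) \ll y^{k-1}$ on the small-argument tail of the $J$-Bessel function, combined with Weil's bound $|S(e^2, n, c)| \leq \tau(c) (e^2, n, c)^{1/2} c^{1/2}$, makes this range negligible. For the complementary range, I would exploit the smooth averaging over $q$ (which constrains $c$ to be a multiple of $q \sim Q$) together with the Kuznetsov trace formula (Proposition~\ref{prop:ktf}), transforming the Kloosterman-sum sum into a spectral expression over the holomorphic, Maass and Eisenstein parts of the $\mathrm{GL}_2$ spectrum, and control the resulting quantities via Rankin--Selberg-type estimates for Fourier coefficients.

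The main obstacle is the regime where $x$ is close to $Q$ and the product $n = p_1\cdots p_\ell$ well exceeds the analytic conductor $q$ of the family; in that regime the Bessel weight no longer supplies decay, and the pointwise Weil bound applied termwise falls short of the target error by large powers of $\log x$. The required saving must then be produced entirely by the smooth averaging over $q$ and by the spectral large sieve inequalities underlying Kuznetsov, uniformly in $\ell$, after which one summmation over the $O(\pi(x)^\ell)$ tuples of distinct primes with $1/\sqrt{n}$ weight must remain below $(\log\log x)^{\ell/2}$. Securing this uniformity --- and thereby showing that the $1$-parts contribute only as an error term in the moment computation --- is the technical heart of the argument, and is precisely the point where the cruder bounds of Lemma~\ref{lem:1-parts} must be refined by genuine use of the spectral theory.
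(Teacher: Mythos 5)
Your roadmap matches the paper's proof essentially step by step: Hecke multiplicativity collapses the product into $a_f(n)$ with $n=p_1\cdots p_\ell$ squarefree; Lemma~\ref{lem:sieve} sieves back to $\Delta_d(e^2,n)$; Petersson's diagonal $\delta_{e^2,n}$ vanishes because $n$ is squarefree $\geq 2$; and Kuznetsov plus the spectral large sieve (the machinery of \cite{bcl}) turns the off-diagonal into something with a power saving in $Q$, which absorbs the $\log$-powers from the $\ell$-fold prime sum. The one place where your description blurs the mechanism is the application of Kuznetsov: in the actual argument it is not applied to the Petersson Kloosterman sum in the modulus $c$ directly, but rather --- after the reparametrization $q=L_1^2L_2 rn$, Möbius inversion, a dyadic cut in the $p_i$, and a Fourier separation of variables in the test function --- to the sum over the residual level parameter $n$, recombined with $c$ into a single modulus $\mathfrak{m}=cL_1drn$ so that Kuznetsov applies with the new level $cL_1rd$; this separation-of-variables step is what makes the spectral side accessible, and glossing over it hides where the real work is. Otherwise the plan, including your observation that the problematic regime $P\asymp Q$ is handled only because the spectral large sieve supplies a genuine $\sqrt{P}/Q$ saving uniform in $\ell$, is the same as the paper's.
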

The techniques used in~\cite{rs} for quadratic twists of elliptic curves do not apply in our case, mainly because they have complete multiplicativity of the characters and they could use Poisson summation formula for the sum of characters. We closely follow the strategy of \cite{bcl}, who proved the analogous result with different weights for a single prime, and extend inductively as in \cite{miller}. The rest of this section is dedicated to the proof of this result.

\begin{proof}
Using Hecke multiplicativity for modular coefficients, we rewrite $a_f(p_1) \cdots a_f(p_\ell) = a_f(p_1 \cdots p_\ell)$ since the primes are different. We are therefore reduced to study the sum
\begin{equation}
\Sigma := \frac{1}{N(Q)} \sum_{q \geqslant 1} \Psi\left(\frac{q}{Q}\right) \sumh_{f \in H_k(q)} \sum_{\substack{p_1, \ldots, p_{\ell} \leqslant x \\ p_i\nmid q \\ p_i \neq p_j}} \frac{a_f(p_1 \cdots p_\ell)}{\sqrt{p_1 \cdots p_\ell}}.
\end{equation}

The harmonic sum has to be exploited by means of trace formulas. However, trace formulas, viz. the Petersson trace formula here, can be used for sums over \textit{all} the modular forms of a given weight and level, not only over newforms as in \eqref{eq:case5}. It is therefore necessary to add back the missing oldforms in the sum. Using Lemma \ref{lem:sieve} to do so and swapping summations in \eqref{eq:case5}, we obtain
\begin{equation*}
    \Sigma = \frac{1}{N(Q)} \sum_{q \geqslant 1} \Psi\left(\frac{q}{Q}\right) \sum_{\substack{p_1, \ldots, p_\ell \leqslant x \\ p_i \neq p_j\\ p_i \nmid q}}  \frac{1}{\sqrt{p_1 \cdots p_\ell}} \sum_{\substack{q = L_1L_2d \\ L_1 \mid q_1 \\ L_2 \mid q_2}} \frac{\mu(L_1L_2)}{L_1L_2} \prod_{\substack{p \mid L_1 \\ p^2 \nmid d}} (1-p^{-2})^{-1} \sum_{e \mid L_2^\infty} \frac{\Delta_d(e^2, p_1 \cdots p_\ell)}{e}.
\end{equation*}

We truncate the summations over $L_1, L_2$ and over $e$. Consider the tail of the sum, for $L_1L_2 > L_0$, use the trivial estimate given in Lemma \ref{lem:1-parts} to bound the tail of $\Sigma$ and using that $N(Q) \asymp Q$ by standard dimension formulas, to get
\begin{align}
    & \frac{1}{Q} \sum_{L_1L_2 > L_0} \frac{1}{L_1L_2} \Psi\left( \frac{L_1L_2d}{Q}\right) \sum_{e \mid L_2^\infty} \frac{\tau(e^2)}{e} \left|\ \ \sumh_{f \in B_k(q)} \sum_{p_1, \ldots,p_\ell \leqslant x} \frac{a_f(p_1 \cdots p_\ell)}{\sqrt{p_1 \cdots p_\ell}} \right| \\
    & \quad \ll \frac{\log Q}{Q} \sum_{q \ll Q/L_0} \frac{\tau(L_2)}{L_1L_2} \Psi\left( \frac{L_1L_2d}{Q}\right) \ll \frac{\log Q}{Q} \sum_{q \ll Q/L_0} \log(Q)^\ell \ll \frac{\log(Q)^{\ell + 1}}{L_0}
\end{align}
so that we get an error term of constant size for any $L_0 > \log(Q)^{\ell + 1}$. We can bound roughly the sum for $L_1L_2 < L_0$ but $e>E$ and get an error of constant size so long as $E$ is not less than a power of $\log(Q)$. We therefore can restrict the sums from now on to these ranges $L_1L_2 < L_0$ and~$e < E$ for $E$ a certain power of $\log(Q)$, as in \cite{bcl}.
As in the above sections, we can add the sum over primes dividing the level, i.e. $p_i \mid q$, at a cost $\log\log\log q$, therefore getting a saving even without exploiting the summation over the family. We are therefore reduced to deal with $\Sigma = \Sigma_2 + O(1)$ where
\begin{equation}
    \Sigma_2 := \frac{1}{N(Q)} \sum_{q \geqslant 1} \Psi\left(\frac{q}{Q}\right) \sum_{p_i \neq p_j} \frac{1}{\sqrt{p_1 \cdots p_\ell}} \sum_{\substack{q = L_1L_2d \\ L_1L_2 < L_0 \\ L_1 \mid q_1 \\ L_2 \mid q_2}} \frac{\mu(L_1L_2)}{L_1L_2} \prod_{\substack{p \mid L_1 \\ p^2 \nmid d}} (1-p^{-2})^{-1} \sum_{\substack{e \mid L_2^\infty \\ e < E}} \frac{\Delta_d(e^2, p_1 \cdots p_\ell)}{e}.
\end{equation}

We now perform a more precise arithmetic parametrization of the summation, following \cite{bcl}. Recalling that $q = L_1L_2d$, we replace the conditions $L_i \mid q_i$ by $L_1 \mid d$, $(L_2, d)=1$, and~$d = L_1m$ as in \cite[Remark following Lemma 2.3]{bcl}. This in particular implies
\begin{equation}
    \prod_{\substack{p_1 \mid L_1 \\ p_1^2 \nmid d}} (1-p^{-2})^{-1} = \prod_{p \mid L_1} (1-p^{-2})^{-1} \prod_{r \mid (L_1, m)} \frac{\mu(r)}{r^2}.
\end{equation}

We therefore have $q = L_1^2L_2m$ where $(m, L_2)=1$, and write $m = rn$ since $r \mid m$. We altogether have 
\begin{align}
    \Sigma_2 & = 
     \frac{1}{N(Q)} \sum_{q \geqslant 1} \Psi\left(\frac{q}{Q}\right) \sum_{p_i \neq p_j} \frac{1}{\sqrt{p_1 \cdots p_\ell}} \sum_{\substack{q = L_1^2L_2rn \\ L_1L_2 < L_0 \\ (L_1rn, L_2) = 1}} \frac{\mu(L_1L_2)}{L_1L_2} \\
     & \qquad \times \prod_{p_1 \mid L_1} (1-p^{-2})^{-1} \prod_{r \mid (L_1, rn)} \frac{\mu(r)}{r^2} \sum_{\substack{e \mid L_2^\infty \\ e < E}} \frac{\Delta_d(e^2, p_1 \cdots p_\ell)}{e}.
\end{align}

By Petersson trace formula from Proposition \ref{prop:ptf}, noting that $e^2 \neq p_1 \cdots p_\ell$ since the primes are assumed to be all different, we get
\begin{equation*}
\Delta_d(e^2, p_1 \cdots p_\ell) = 2\pi i^{-k} \sum_{c \geqslant 1} \frac{S(e^2, p_1 \cdots p_\ell, cL_1 r n)}{cL_1 r n}J_{k-1}\left( \frac{4\pi \sqrt{e^2p_1 \cdots p_\ell}}{cL_1 r n}\right).
\end{equation*}
Using M\"obius inversion to detect the condition $(L_2, n)=1$, we can rephrase $\Sigma_2$ as
\begin{align*}
    \Sigma_2 & = \frac{2\pi i^{-k} }{N(Q)} \sum_{q \geqslant 1} \Psi\left( \frac{q}{Q}\right) \sum_{\substack{L_1L_2 < L_0 \\ (L_1, L_2)=1}} \frac{\mu(L_1L_2)}{L_1L_2} \prod_{p \mid L_1} (1-p_1^{-2})^{-1} \sum_{\substack{p_1, \ldots, p_\ell \\ p_i \neq p_j}} \frac{1}{\sqrt{p_1 \cdots p_\ell}} \sum_{\substack{e \mid L_2^\infty \\ e < E}} \frac{1}{e} \\
    & \quad \times \sum_{c \geqslant 1} \sum_{n \geqslant 1} \sum_{d \mid L_2} \mu(d) \sum_{r \mid L_1} \frac{\mu(r)}{r^2} \frac{S(e^2, p_1 \cdots p_\ell, cL_1 dnr)}{cL_1 nrd} \Psi\left( \frac{L_1^2L_2 nrd}{Q}\right) J_{k-1} \left( \frac{4\pi\sqrt{e^2 p_1 \cdots p_\ell}}{cL_1nrd} \right).
\end{align*}

Noting $\mathfrak{m} := cL_1dnr \equiv 0$ modulo $cL_1rd$, we get that the sum over $n$ is
\begin{equation}
    \sum_{\mathfrak{m} \equiv 0 (cL_1 d r)} \frac{S(e^2, p_1 \cdots p_\ell, \mathfrak{m})}{\mathfrak{m}} f\left( \frac{4\pi\sqrt{e^2p_1 \cdots p_\ell}}{\mathfrak{m}}\right),
\end{equation}
where we introduced
\begin{equation}
    f(\xi) := \Psi\left(\frac{2\pi \sqrt{e^2p_1 \cdots p_\ell} L_1L_2}{cQ\xi}\right) J_{k-1}(\xi).
\end{equation}

Smoothly dyadically cut the sums over $p_i$ into blocks of size $p_i \asymp P_i$, inputting a smooth partition of unity $V$ such that $\sum V(p_i/P_i) = 1$ for all $i \in \{1, \ldots, \ell\}$. We then recognize the function as 
\begin{equation}
    f(\xi) = H\left(\xi, \frac{p}{p_1 \cdots p_\ell}\right) J_{k-1}(\xi),
\end{equation}
where 
\begin{equation}
    H(\xi, \lambda) := \Psi\left(\frac{X}{\xi}\sqrt{\lambda}\right) \quad \text{with} \quad X = \frac{4\pi L_1L_2\sqrt{p_1 \cdots p_\ell e^2}}{cQ}.
\end{equation}

We can then follow \textit{mutatis mutandis} the proof of \cite[Lemma 6.1]{bcl} to deduce from the smoothness and the compact support of $\Psi$ that the $\mathbb{R}^2$-Fourier transform of $H$ is rapidly decaying, viz.
\begin{equation}
    \widehat{H}(u,v) \ll_A ((1+|u|)(1+|v|))^{-A}, 
\end{equation}
for all $A \geqslant 1$, by repeatedly integrating by parts. Moreover, since $\Psi$ is compactly supported, we can represent the above by a plateau function $W(\xi/X)$. By Fourier inversion, we have
\begin{equation}
    f(\xi) = J_{k-1}(\xi) W\left( \frac{\xi}{X}\right) \iint_{\mathbb{R}^2} \widehat{H}(u,v)e(u\xi + v\tfrac{p}{P}) dudv.
\end{equation}
Inserting it in the above expression for $\Sigma_2$, we get 
\begin{align}
    \Sigma_2 &= \frac{2\pi i^{-k}}{N(Q)} \sum_{\substack{(L_1, L_2)=1 \\ L_1L_2 < L_0}} \frac{\mu(L_1L_2)}{L_1L_2} \prod_{p \mid L_1} (1-p^{-2})^{-1} \sum_{r \mid L_1} \frac{\mu(r)}{r^2} \sum_{d \mid L_2} \mu(d) \sum_{P_i, \mathrm{dyadic}} \\
    & \qquad \sum_{\substack{e \mid L_2^\infty \\ e < E}} \frac{1}{e} \iint_{\mathbb{R}^2} \widehat{H}(u, v) \sum_{c \geqslant 1} \sum_{p_i \neq p_j} \frac{1}{\sqrt{p_1 \cdots p_\ell}} \prod_{i=1}^\ell V\left(\frac{p_i}{P_i}\right) e\left( v\frac{p_i}{P_i}\right) S(u,p_1 \cdots p_\ell) dudv,
\end{align}
where we let
\begin{equation}
    S(u,p) := \sum_{n \geqslant 1} \frac{S(e^2, p, cL_1rdn)}{cL_1rdn}h_u\left( \frac{4\pi\sqrt{pe^2}}{cL_1rdn}\right)
\end{equation}
with $h_u(x) = W(x/X) J_{k-1}(x) e(u x)$. This expression $S(u,p)$  exactly appears as an arithmetic side of a Kuznetsov trace formula, with the new level $cL_1rd$. By the Kuznetsov trace formula from Proposition \ref{prop:ktf}, the innermost sums can be rephrased as
\begin{equation}
    \sum_{c \geqslant 1} \sum_{p_i} \prod_{i=1}^\ell \frac{1}{\sqrt{p_i}} e\left( c\frac{p_i}{P_i}\right) V\left( \frac{p_i}{P_i}\right) \left[\mathcal{D}(c,\mathfrak{p},u) + \mathcal{C}(c,\mathfrak{p},u) + \mathcal{H}(c, \mathfrak{p}, u)\right], 
\end{equation}
where $\mathfrak{p} = p_1 \cdots p_\ell$ and $\mathcal{D}$, $\mathcal{C}$ and $\mathcal{H}$ stand for the discrete, continuous and holomorphic contributions from the spectral side respectively. Recall that they are explicitly defined by
\begin{align}
    \mathcal{D}(c, p, u) & = \sum_j \frac{\overline{\alpha_j}(e^2) \alpha_j(p) \sqrt{pe^2}}{\cosh \pi \kappa_j} h_+(\kappa_j), \\
    \mathcal{C}(c, p, u) & = \frac{1}{\pi} \sum_{\mathfrak{c}} \int_{\mathbb{R}} \frac{\sqrt{pe^2}}{\cosh \pi t} \overline{\varphi_{\mathfrak{c}}}(e^2, t) \varphi_{\mathfrak{c}}(p, t) h_+(t) dt, \\
    \mathcal{H}(c, p, u) & = \frac{1}{2\pi} \sum_{\substack{\ell \geqslant 2 \\ 1 \leqslant j \leqslant \theta_\ell(cL_1, rd)}} (l-1)! \sqrt{pe^2}  \ \overline{\psi_{j, \ell}}(e^2) \psi_{j, \ell}(p) h_h(\ell),
\end{align}
where the precise notations are as in Section \ref{subsubsec:ktf}. 

\begin{prop}
\label{prop:bcl}
    With the above notations, we have 
    \begin{align}
        \sum_{c \geqslant 1} \sum_{p \asymp P} \frac{1}{\sqrt{p}} e\left( v\frac{p}{P}\right) V\left(\frac{p}{P}\right) \mathcal{D}(c, p, u) & \ll Q^\varepsilon (1+|u|)^2 (1+|v|)^2 \frac{\sqrt{P}}{Q}, \\
        \sum_{c \geqslant 1} \sum_{p \asymp P} \frac{1}{\sqrt{p}} e\left( v\frac{p}{P}\right) V\left(\frac{p}{P}\right) \mathcal{H}(c, p, u) & \ll Q^\varepsilon (1+|u|)^2 (1+|v|)^2 \frac{\sqrt{P}}{Q}, \\
        \sum_{c \geqslant 1} \sum_{p \asymp P} \frac{1}{\sqrt{p}} e\left( v\frac{p}{P}\right) V\left(\frac{p}{P}\right) \mathcal{C}(c, p, u) & \ll Q^\varepsilon (1+|u|)^2 (1+|v|)^2 \left(\frac{\sqrt{P}}{Q} + P^{1/4+\varepsilon} \right).
    \end{align}
\end{prop}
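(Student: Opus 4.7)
The plan is to follow closely the strategy of Baluyot--Chandee--Li \cite[Sections 6--7]{bcl}, to which the present setup has been reduced through the preceding manipulations (opening Petersson, arithmetic parametrization via Lemma \ref{lem:sieve}, Fourier separation of variables, and application of Kuznetsov). Each of the three bounds is obtained along the same general scheme: Cauchy--Schwarz separates the Fourier coefficient at $e^2$ from the one at $p$, combined with a spectral large sieve for the corresponding part of the spectrum, with $(1+|u|)^2$ coming from the $\xi$-oscillation in the test function $h_u$ and $(1+|v|)^2$ coming from the $p$-oscillation $e(vp/P)$.

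The first step is to control the Bessel transforms $h_+$ and $h_h$ of $h_u(\xi) = W(\xi/X) J_{k-1}(\xi) e(u\xi)$. Standard estimates on $J$-Bessel functions combined with repeated integration by parts show that these transforms decay rapidly in the spectral parameter, extract a polynomial factor $(1+|u|)^2$ from the oscillation $e(u\xi)$, and effectively truncate the spectral sums at scale polynomial in $X$. Symmetrically, repeated integration by parts in the $p$-variable against the oscillating cutoff $e(vp/P) V(p/P)$ produces the factor $(1+|v|)^2$. This is where the smoothness of $\Psi$ (inherited by the plateau function $W$ and by the cutoff $V$) is crucially used.

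After these reductions, one applies Cauchy--Schwarz to split each spectral contribution as a product of two square-roots of spectral sums. The sum involving the coefficients at $e^2$ is bounded trivially, since $e < E$ is only a small power of $\log Q$. The sum involving the coefficients at $p \asymp P$ is then controlled by the spectral large sieve of Deshouillers--Iwaniec, applied at the level $cL_1 rd$ of the Kuznetsov trace formula. Summation over $c \geqslant 1$ against the decay of the Bessel transforms yields the decisive $\sqrt{P}/Q$ saving, which provides the stated bounds for $\mathcal{D}$ and $\mathcal{H}$.

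The main obstacle is the continuous spectrum contribution $\mathcal{C}$, which picks up the additional term $P^{1/4+\varepsilon}$. This arises because the Eisenstein Fourier coefficients $\varphi_{\mathfrak{c}}(p,t)$ essentially encode values of Dirichlet $L$-functions on the critical line and thus satisfy a weaker large sieve than the Hecke coefficients of cusp forms. The $t$-integral must be split according to whether $|t|$ is small or large; the small-$|t|$ regime is the source of the extra $P^{1/4+\varepsilon}$, of Burgess subconvex flavor. This is the place where the present argument must most faithfully mimic \cite[Proposition 6.3]{bcl}, which is tailored to absorb precisely this contribution into an acceptable error, given that $P \ll Q^{1-\delta}$ in the regime of interest after the dyadic partition.
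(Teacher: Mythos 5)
Your sketch captures the correct high-level architecture imported from \cite{bcl} (Bessel transform control, extraction of $(1+|u|)^2(1+|v|)^2$ by repeated integration by parts, Kuznetsov spectral decomposition, spectral large sieve), and is at roughly the same level of detail as the paper's own sketch, but the key technical step — the inner $p$-sum — is handled along a genuinely different line. The paper fixes a spectral parameter $j$ and bounds the sum $\sum_p \rho_j(p)\,e(vp/P)V(p/P)$ for that individual Maass/Eisenstein form by Mellin inversion followed by the ``trivial bound'' of Lemma~\ref{lem:1-parts} (a Perron/$L'/L$ estimate), giving a bound essentially independent of $P$ and linear in $|\rho_j(1)|+|\rho_f(1)|$; the spectral large sieve is then applied only to the sum $\sum_j |\rho_j(1)|^2/\cosh\pi\kappa_j$, i.e.\ with trivial test sequence. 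You instead propose Cauchy--Schwarz in $j$ to decouple $\overline{\rho_j(e^2)}$ from the $\rho_j(p)$-sum, and then apply the Deshouillers--Iwaniec large sieve directly to the quadratic form in the prime coefficients. Both are bcl-style routes; yours is more robust and requires less input, but it does not exploit the Hecke/$L$-function structure of the individual form, which in the paper is precisely what supplies the power-$\log$-only loss per $j$, and — crucially — is what makes the reduction iterate by an ``immediate induction'' to products over $\ell$ distinct primes (your proposal only treats a single prime and does not indicate how the argument propagates to the $\ell$-fold sum). You should also make explicit that the decisive $\sqrt{P}/Q$ factor enters through the $J$-Bessel size $\min(X^{k-1},X^{-1/2})$ with $X\asymp eL_1L_2\sqrt{P}/(cQ)$ after summation over $c$, rather than through the large sieve itself; your write-up leaves the source of this saving implicit. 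The description of the Eisenstein case and the extra $P^{1/4+\varepsilon}$ is consistent with \cite[Proposition 6.3]{bcl} and matches what the paper defers to there.
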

We obtain the analogue statement for the product by an immediate induction.
The spectral aspect of the computations are exactly as in \cite[Propositions 6.2 and 6.3]{bcl} where they are extensively treated, and we therefore afford not to give all the details. The main point to import the computations from \textit{loc. cit.} is that the quantities only differ by logarithmic factors, while the bound ultimately obtained for $\Sigma_2$ therein displays a power savings in $Q$, see \cite[End of Section 6]{bcl}. Since~$P \ll c(f) \leqslant Q$, Proposition \ref{prop:bcl} indeed implies Proposition \ref{prop:first-moments}. \qed

We briefly explain how to bound the discrete and holomorphic parts, following \cite[Proposition~6.2]{bcl}.
The discrete contribution (the holomorphic one is analogous, and easier since we have the Deligne bound) displays sums that are
\begin{equation}
    \sum_{c \geqslant 1} \sum_p \frac{1}{\sqrt{p}} e\left(v\frac{p}{P}\right) \mathcal{D}(c, p, u) V\left( \frac{p}{P} \right)= \sum_{c \geqslant 1} \sum_j \frac{e \overline{\rho_j(e^2)}}{\cosh \pi \kappa_j} h_+(\kappa_j) \sum_p \frac{\sqrt{p} \rho_j(p)}{\sqrt{p}} e\left(v\frac{p}{P}\right) V\left(\frac{p}{P}\right).
\end{equation}

We start bounding this innermost $p$-sum: 
\begin{lem}
We have 
\begin{equation}
       \sum_p \frac{\sqrt{p} \rho_j(p)}{\sqrt{p}} e\left( \frac{vp}{P} \right) V\left( \frac{p}{P}\right) \ll (|\rho_j(1)| + |\rho_f(1)|) (cL_1rd)^\varepsilon \log(P)^\varepsilon (1+|v|)^2, 
\end{equation}
where $f$ is a suitable oldform below $f$, see \cite{bcl}. 
\end{lem}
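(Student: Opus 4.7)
My plan is to reduce the prime sum to one involving Hecke eigenvalues and then apply Mellin inversion together with GRH-based bounds on $L(s, u_j)$. First, the cancellation $\sqrt p / \sqrt p$ reduces the inner sum to $\sum_p \rho_j(p)\, e(vp/P)\, V(p/P)$. When $u_j$ is a newform at level dividing $cL_1rd$, the Hecke relation $\sqrt p\, \rho_j(p) = \lambda_j(p) \rho_j(1)$, valid for primes coprime to the level, permits factoring out $\rho_j(1)$ and reduces the task to bounding $\sum_p \lambda_j(p)\, p^{-1/2}\, W(p/P)$ with $W(x) := V(x) e(vx)$. When $u_j$ is an oldform, the Atkin-Lehner decomposition developed in \cite{bcl} expresses $\rho_j$ in terms of the Fourier coefficients of the newform $f$ beneath $u_j$ with bounded multiplicative loss, which is precisely the origin of the $\rho_f(1)$ term in the bound.

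Next, I would apply Mellin inversion to separate the arithmetic and analytic aspects. Setting $\tilde W(s) := \int_0^\infty W(x)\, x^{s-1}\, dx$, the smoothness and compact support of $V$ together with repeated integration by parts yield the decay $\tilde W(\sigma + it) \ll_{A,\sigma} (1 + |t|/(1+|v|))^{-A}$ for every $A \geqslant 1$ and $\sigma$ in a bounded range, so that $\tilde W$ is essentially concentrated on $|t| \lesssim 1+|v|$ with values of order $O(1)$ there. Mellin inversion gives
\[
\sum_p \frac{\lambda_j(p)}{\sqrt p}\, W\!\Bigl(\frac p P\Bigr) = \frac{1}{2\pi i} \int_{(\sigma_0)} \tilde W(s)\, P^s\, \mathcal G_j\!\bigl(\tfrac12 + s\bigr)\, ds
\]
for any $\sigma_0 > 1/2$, where $\mathcal G_j(s) := \sum_p \lambda_j(p) p^{-s}$ coincides with $\log L(s, u_j)$ up to a contribution from prime powers $p^\nu$, $\nu \geqslant 2$, which is analytic and bounded on $\Re s > 1/2$. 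Under the generalized Riemann hypothesis for $L(s, u_j)$, I shift the contour to $\Re s = 1/\log P$ without crossing any singularity. A Selberg-type bound analogous to Lemma \ref{lem:L-bound} then gives $|\log L(\tfrac12 + s, u_j)| \ll \log\log(cL_1rd(2+|\Im s|))$ on the shifted line, using that the conductor of $u_j$ is at most $cL_1rd$. Combining this with the concentration of $\tilde W$ and the triviality $P^{1/\log P} = O(1)$, the integral is bounded by $(1+|v|) \log\log(cL_1rd(1+|v|))$, which trivially embeds into the claimed $(cL_1rd)^\varepsilon \log(P)^\varepsilon (1+|v|)^2$ for any $\varepsilon > 0$ (the losses in $(1+|v|)$ and $\log P$ being free).

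The hardest step is the oldform case: when $u_j$ is not a newform, Hecke multiplicativity fails at primes dividing the relevant portion of the level, and a careful application of Atkin-Lehner theory as in \cite{bcl} is required to express $\rho_j(p)$ in terms of $\rho_f(p)$ for the newform $f$ beneath $u_j$. This reduction, together with the bookkeeping of intermediate divisors in the Atkin-Lehner lift, accounts for both the appearance of the $\rho_f(1)$ term in the bound and the polynomial loss $(cL_1rd)^\varepsilon$ in the final estimate.
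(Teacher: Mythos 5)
Your outline agrees with the paper up to the Mellin-inversion step, but diverges in how the remaining object is bounded, and one of your two main steps is not justified by the tools the paper makes available.

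After applying Mellin inversion to $W_v(x)=e(vx)V(x)$ and using the decay $\tilde W_v(it)\ll ((1+|v|)/(1+|t|))^A$, the paper stays on the line $\Re s=0$ and bounds the residual (smoothed, finite) prime sum
\begin{equation*}
\sum_p \frac{\alpha_j(p)\sqrt p}{p^{1/2+it}}\,V_0\!\left(\frac pP\right)
\end{equation*}
directly via the ``trivial bound'' of Lemma~\ref{lem:1-parts}, which is a Perron-type estimate in a range where no contour crossing is needed. You instead replace the finite sum by the full Dirichlet series $\mathcal G_j(\tfrac12+s)$, identify it with $\log L(\tfrac12+s,u_j)$ modulo higher prime powers, and shift the contour to $\Re s=1/\log P$. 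That contour shift across the strip $1/\log P<\Re s<\sigma_0$ requires $L(\cdot,u_j)$ to have no zeros there, i.e.~GRH for the Maass-form $L$-functions appearing in the Kuznetsov spectral side for level $cL_1rd$. This is a genuinely stronger hypothesis than anything the paper assumes: the paper only conditions on GRH for $L(s,\mathrm{sym}^2 f)$ with $f\in H_k(q)$, and the whole point of its route through Lemma~\ref{lem:1-parts} is to avoid invoking GRH for the auxiliary Maass spectrum. So while your structure (Mellin $+$ $L$-bound) mirrors the paper's, the specific key lemma you rely on is not among the paper's standing assumptions.

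There is also a quantitative slip. You claim that a Selberg-type bound analogous to Lemma~\ref{lem:L-bound} gives $|\log L(\tfrac12+s,u_j)|\ll\log\log(cL_1rd(2+|\Im s|))$ on $\Re s=1/\log P$. Lemma~\ref{lem:L-bound} is stated and valid only for $\Re(s)\geqslant 1-1/\log\log c(f)$, i.e.~essentially at the edge $\Re=1$; it does not extend to $\Re(\tfrac12+s)=\tfrac12+1/\log P$, which is a hair above the critical line. At that height the best conditional (GRH) bound is of Littlewood type, roughly $\log c/\log\log c$, not $\log\log c$. This does not break the final estimate, since $\log c/\log\log c\ll (cL_1rd)^\varepsilon$ is still admissible, but as written the step is false and needs to be replaced by the correct near-critical bound. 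Finally, your remark on the oldform case via Atkin--Lehner is in the right spirit and matches the role of the $|\rho_f(1)|$ term and the $(cL_1rd)^\varepsilon$ loss, as in \cite{bcl}, so that part is consistent with the paper.
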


\begin{proof}
    By Mellin inversion applied to $eV$, we have 
    \begin{equation}
         \sum_p \frac{\sqrt{p} \rho_j(p)}{\sqrt{p}} e\left( \frac{vp}{P}\right) V\left( \frac{p}{P}\right) = \frac{1}{2i\pi} \sum_p \frac{\sqrt{p \alpha_j(p)}}{\sqrt{p}} V_0\left( \frac{p}{P}\right) \int_{(0)} p^{-s} \tilde{W}(s) P^s ds
    \end{equation}
    where $W(x) = W_v(x) = e(vx) V(x).$ We have that $\tilde{W}_v(it) \ll ((1+|v|)/(1+|t|))^{A}$ by integrating by parts. Swapping the summation and integration, we therefore get
    \begin{equation}
        \frac{1}{2i\pi} \int_{(0)} P^s \tilde{W}(s) \sum_p \frac{\alpha_j(p) \sqrt{p}}{p^{1/2+it}} V_0(p/P).
    \end{equation}

    By the "trivial bound" on the power one terms from Lemma \ref{lem:1-parts}, we have that 
    \begin{align}
        \sum_p \frac{\alpha_j(p) \sqrt{p}}{p^{1/2+it}} V_0\left( \frac{p}{P}\right)& \ll |\rho_j(1)| \log(cL_1 rd) \log(X)^\varepsilon + |\rho_f(1)| (cL_1 rd)^\varepsilon \\
        & \ll (|\rho_j(1)| + |\rho_f(1)|) (cL_1 rd)^\varepsilon (1+|t|)^\varepsilon \log(P)^\varepsilon.
    \end{align}

Therefore, applying the decay of $\tilde{W}(s)$ written above with $A=2$ to ensure the convergence of the vertical integral, we get that the whole sum over $p$ is indeed 
\begin{equation}
    \ll (|\rho_j(1)| + |\rho_f(1)|) (cL_1 rd)^\varepsilon (1+|v|)^2 \int_{(0)} \frac{dt}{(1+|t|)^{2-\varepsilon}}
\end{equation}
and the vertical integral converges, giving the claimed result.
    \end{proof}

Back to the whole discrete contribution, and inputting the above bound, we get that 
\begin{align}
    \mathcal{D}(c, p, u) & = \sum_{c \geqslant 1} \sum_j \frac{e \rho_j(e^2)}{\cosh \pi \kappa_j} h_+(\kappa_j) \sum_p \frac{\sqrt{p}\alpha_j(p)}{\sqrt{p}} e\left( \frac{vp}{P}\right) V\left( \frac{p}{P}\right) \\
    & \ll \sum_{c \geqslant 1} \min(X^{k-1}, X^{-1/2}) (cL_1 dr)^\varepsilon \log(P)^\varepsilon (1+|v|)^{2} e^{1+\varepsilon} \frac{1 + |\log X|}{F^{1-\varepsilon}} \\
    & \qquad \times \sum_j \frac{|\rho_f(1)|(|\rho_j(1)| + |\rho_f(1)|)}{\cosh \pi \kappa_j} \left( \frac{F}{1 + \kappa_j}\right)^C
\end{align}
where we used the bound on $h_+$  given in \cite{bcl}, with in particular $F \asymp (1+|u|)(1+4\pi L_1L_2\sqrt{Pe^2}/cQ)$, as well as the bound $e\rho_j(e^2) \ll e^{1+\varepsilon} |\rho_f(1)|$ on the coefficients. We can now sum the spectral part depending on the position of the spectral parameter $\kappa_j$ with respect to $F$: 
\begin{equation*}
    \sum_j \frac{|\rho_f(1)|(|\rho_j(1)| + |\rho_f(1)|)}{\cosh \pi \kappa_j} \left( \frac{F}{1 + \kappa_j}\right)^C \ll \sum_{|\kappa_j| < F} \frac{|\rho_j(1)|^2}{\cosh \pi \kappa_j} \frac{1}{F^{1-\varepsilon}} + \sum_{|\kappa_j| > F} \frac{|\rho_j(1)|^2}{\cosh \pi \kappa_j} \frac{1}{F^{1-\varepsilon}} \left( \frac{F}{1+|\kappa_j|}\right)^{2+\varepsilon}.
\end{equation*}
The first sum is bounded by a spectral large sieve. We can then finish the proof as in \cite{bcl}, \textit{mutatis mutandis}.
It then remains to bound the continuous spectrum:
we can do it as in \cite{bcl} following exactly the same lines, inputting the induction as in \cite{miller}, and just changing the "trivial bound" from~\cite[Lemma 2.12]{miller} to the "trivial bound" from Lemma \ref{lem:1-parts}. This completes the proof of Proposition \ref{prop:first-moments} and of Theorem~\ref{prop:moments}. 
\end{proof}

\subsection{Moment method and distribution}

As in \cite{rs}, the determination of the moments in Theorem \ref{prop:moments} essentially allows to say that $P(f,x)$, hence the central value, mimicks the behavior of a normal distribution, in phase with the Keating-Snaith Conjecture \ref{conj:keating-snaith}. 
We encapsulate in the following statement the distributional consequence of this moment method.
\begin{coro}
\label{coro:moments}
We have, for all sequence $(b_f)_{f \in H_k(q)}$, 
\begin{equation}
    \sumh_{\substack{f \in H_k(q) \\ P(f,x)/\sqrt{\log\log x} \in (\alpha, \beta)}} b_f = (M(\alpha, \beta) + o(1))\sumh_{f \in H_k(q)} b_f,
\end{equation}
where 
\begin{equation}\label{eq:M_alpha_beta}
    M(\alpha, \beta) := \frac{1}{\sqrt{2\pi}}\int_\alpha^\beta e^{-x^2/2} dx.
\end{equation}
\end{coro}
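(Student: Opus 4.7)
The proof is a direct application of the method of moments (Fréchet-Shohat theorem). Theorem \ref{prop:moments} establishes that, for every integer $\ell \geq 0$, the $\ell$-th moment of $P(f,x)/\sqrt{\log\log x}$ averaged over the harmonically weighted family converges to $M_\ell$, the $\ell$-th moment of the standard normal distribution $\mathcal{N}(0,1)$. Since $\mathcal{N}(0,1)$ is determined by its moments (Carleman's condition $\sum M_{2\ell}^{-1/(2\ell)} = \infty$ is trivially satisfied, as $M_{2\ell}^{1/(2\ell)} \sim \sqrt{2\ell/e}$), this entails weak convergence of the empirical distribution of the normalized sum over primes to $\mathcal{N}(0,1)$.

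The continuity of the Gaussian CDF everywhere then allows the usual sandwich argument: the indicator $\mathbf{1}_{(\alpha,\beta)}$ can be approximated from above and below by continuous (indeed polynomial, after a cutoff) functions that differ only on an arbitrarily small neighborhood of $\{\alpha,\beta\}$. Coupled with the tail bound provided by the finite-moment control (moments of all orders being $O((\log\log x)^{\ell/2})$ after normalization, hence $O(1)$), this transforms weak convergence into convergence of probabilities on $(\alpha,\beta)$ and yields the unweighted case $b_f=1$:
\begin{equation*}
\frac{1}{N(Q)}\sum_{q \geq 1} \Psi\!\left(\frac{q}{Q}\right) \sumh_{\substack{f \in H_k(q) \\ P(f,x)/\sqrt{\log\log x} \in (\alpha,\beta)}} 1 \longrightarrow M(\alpha,\beta).
\end{equation*}

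The weighted version for a general sequence $(b_f)$ proceeds identically on the $b_f$-weighted measure, once one has verified that the corresponding $b_f$-weighted moments still match the Gaussian $M_\ell$. Concretely, one inserts $b_f$ inside the harmonic average and reruns the case analysis of Section \ref{subsec:splitting}: Cases A, B and C rely only on the pointwise-in-$f$ bounds of Lemmas \ref{lem:large-parts}, \ref{lem:2-parts} and the Rankin estimate of Proposition \ref{lem:2-part-mt-single}, which survive any $b_f$-weighting; only Case D, where the trace formulas are crucial, requires care. The main obstacle is thus the uniformity of the spectral bounds of Proposition \ref{prop:bcl} with an extra weight $b_f$ inside the harmonic sum, but since the weights of interest in Section \ref{sec:conclusion} (indicators capturing the behaviour of the sum over zeros of $L(s,f)$) do not interact with the arithmetic coefficients $a_f(n)$, the Kuznetsov decomposition and the resulting power-saving bound $O(Q^{-\varepsilon})$ remain valid verbatim, yielding the corollary in its stated generality.
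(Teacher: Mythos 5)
Your proof uses the same core idea as the paper: the method of moments, with the indicator $\mathbf{1}_{(\alpha,\beta)}$ approximated by a polynomial and the convergence of polynomial expectations supplied by Theorem~\ref{prop:moments}. Your framing via Fréchet--Shohat and Carleman's condition is a more formal packaging of the paper's terser ``approximate $\mathbf{1}_{(\alpha,\beta)}$ in $L^1$-norm by a polynomial $R$''; both are correct, and your explicit appeal to the uniform moment bound to control tails when passing from polynomials to the indicator is a welcome precision that the paper leaves implicit.

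Where you diverge is in the treatment of the $b_f$ weights, and you are right to be uneasy. The paper's proof silently upgrades the unweighted Theorem~\ref{prop:moments} to a $b_f$-weighted moment estimate before running the polynomial approximation; as literally stated (``for all sequence $(b_f)$''), the corollary is too strong --- taking $b_f$ to be the sign of $P(f,x)$ already falsifies it. What is really needed, and what the paper uses downstream, is the weighted moment identity for the specific weight $b_f = D(f,h)\Psi(q/Q)$, which is exactly the content of the separate ``uncorrelation lemma'' Corollary~\ref{prop:weighted-moments}. Your case-by-case discussion (Cases A--C rest on pointwise-in-$f$ bounds so survive any bounded $b_f$-weighting; Case D is where the trace-formula machinery would need re-examination) correctly locates where the weight matters, but your closing justification --- that the weights of interest ``do not interact with the arithmetic coefficients'' --- is not accurate for $b_f = D(f,h)\Psi$, since $D(f,h)$ is built out of the $a_f(p)$ via the explicit formula. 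The paper resolves that interaction not by rerunning Proposition~\ref{prop:bcl} verbatim but by the Cauchy--Schwarz argument against $P^{(1)}(f,h)^2$ inside Corollary~\ref{prop:weighted-moments}. So: same method, your caution about the $b_f$-uniformity is well-placed and exposes an imprecision in the corollary's statement, but the actual patch is Corollary~\ref{prop:weighted-moments}, not non-interaction of the weights.
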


\begin{proof}
Asymptotically, Theorem \ref{prop:moments} proved that the $\ell$-th moment of $P(f,x)/\sqrt{\log\log x}$ behaves as the $\ell$-th moment of the normal distribution, i.e. when $x$ grows to infinity,
\begin{equation}
     \sum_{q \geqslant 1} \Psi\left(\frac{q}{Q}\right) \sumh_{f \in H_k(q)} \left( \frac{P(f,x)}{\sqrt{\log\log x}} \right)^\ell b_f \sim \frac{1}{\sqrt{2\pi}}\int_{\mathbb{R}} x^\ell e^{-x^2/2} dx \sum_{q \geqslant 1} \Psi\left(\frac{q}{Q}\right)\sumh_{f \in H_k(q)} b_f,
\end{equation}
for all $\ell \geqslant 0$, so we deduce that, for any polynomial $R \in \mathbb{R}[X]$, 
\begin{equation}
    \sum_{q \geqslant 1} \Psi\left(\frac{q}{Q}\right) \sumh_{f \in H_k(q)} R\left( \frac{P(f,x)}{\sqrt{\log\log  x}} \right) b_f \sim \frac{1}{\sqrt{2\pi}}\int_{\mathbb{R}} R(x) e^{-x^2/2} dx \sum_{q \geqslant 1} \Psi\left(\frac{q}{Q}\right)\sumh_{f \in H_k(q)} b_f,
\end{equation}
and by approximating the characteristic function $\mathbf{1}_{(\alpha, \beta)}$ in $L^1$-norm by a polynomial $R$, we deduce that (inputting the smooth sum over levels and the weighted sum of $f \in H_k(q)$ in the summation over $\mathcal{F}_Q$ to ease notations)
\begin{align}
    \sum_{\substack{f \in \mathcal{F}_Q \\ P(f,x)/\sqrt{\log\log x} \in (\alpha, \beta)}} b_f & 
    = \sum_{f \in \mathcal{F}_Q} \mathbf{1}_{(\alpha, \beta)}\left( \frac{P(f,x)}{\sqrt{\log\log  x}} \right) b_f \\
    & \sim \frac{1}{\sqrt{2\pi}} \int_{\mathbb{R}} \mathbf{1}_{(\alpha, \beta)}(x) e^{-x^2/2} dx \sum_{f \in \mathcal{F}_Q} b_f = M(\alpha, \beta)\sum_{f \in \mathcal{F}_Q} b_f,
\end{align}
as claimed.
\end{proof}

\subsection{Uncorrelation lemma}

A similar result has to be available when weighted by one-level densities, analogously to the central result \cite[Proposition 3, second part]{rs}: 
\begin{coro}[Weighted moments property]
\label{prop:weighted-moments}
Assume the generalized Riemann hypothesis for the symmetric squares $L$-functions $L(s, \mathrm{sym}^2 f)$.
We have, for all smooth function $h$ with compactly supported Fourier transform, and all $\ell \geqslant 1$, 
\begin{equation}
\label{weighted-moments}
\frac{1}{N(Q)} \sum_{q \geqslant 1} \Psi\left( \frac{q}{Q} \right) \sumh_{f \in H_k(q)} P(f,x)^\ell D(f,h)  = (M_\ell + o(1)) (\log\log(x))^{\ell/2} \int_{\mathbb{R}} W_{\rm O} h.
\end{equation}
\end{coro}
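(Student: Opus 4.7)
The plan is to substitute the explicit formula of Proposition \ref{prop:explicit-formula-zeros} into $D(f,h)$. Using the assumed generalized Riemann hypothesis for $L(s, \mathrm{sym}^2 f)$ to absorb $P^{(2)}(f,h)$ into the error (as justified by the remark following Proposition \ref{prop:explicit-formula-zeros}), and noting the identity $\widehat{h}(0) + \tfrac12 h(0) = \int_{\mathbb{R}} W_{\rm O} h$, one obtains
\begin{equation*}
    D(f,h) = \int_{\mathbb{R}} W_{\rm O} h + P^{(1)}(f,h) + O\!\left(\frac{\log\log c(f)}{\log c(f)}\right).
\end{equation*}
Multiplying by $P(f,x)^\ell$ and averaging, the constant piece yields, by direct invocation of Theorem \ref{prop:moments}, exactly the target main term $(M_\ell + o(1))(\log\log x)^{\ell/2} \int_{\mathbb{R}} W_{\rm O} h$. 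It thus suffices to show that the two remaining contributions are each $o((\log\log x)^{\ell/2})$.

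For the error term, Cauchy--Schwarz on the harmonic averages combined with Theorem \ref{prop:moments} applied to the $2\ell$-th moment yields
\begin{equation*}
    \frac{1}{N(Q)} \sum_{q \geqslant 1} \Psi\!\left(\tfrac{q}{Q}\right) \sumh_{f \in H_k(q)} |P(f,x)|^\ell \ll (\log\log x)^{\ell/2},
\end{equation*}
and multiplying by $O(\log\log c(f)/\log c(f)) = o(1)$ gives the desired bound.

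The main task is to estimate the average of $P(f,x)^\ell P^{(1)}(f,h)$. Here $P^{(1)}(f,h)$ is a short Dirichlet sum over primes $p \leqslant c(f)^\sigma$, where $\sigma$ is the support width of $\widehat{h}$, in the coefficients $a_f(p)$, carrying the smooth and uniformly bounded weight $\log p \cdot \widehat{h}(\log p/\log c(f))$ together with the overall normalization $2/\log c(f)$. Expanding the product $P(f,x)^\ell P^{(1)}(f,h)$ and gathering equal primes as in Section \ref{subsec:sums-over-primes} produces sums of the same shape as \eqref{s} but with $\ell + 1$ prime variables and the extra smooth weight. These fit into the Case A--D classification of Section \ref{subsec:splitting}, and the arguments of Sections \ref{subsec:splitting} and \ref{sec:only-ones} yield a bound of the form $O((\log\log x)^{(\ell+1)/2}/\log c(f))$. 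Since $(\log\log x)^{1/2}/\log c(f) \to 0$, this is $o((\log\log x)^{\ell/2})$, as required.

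The principal obstacle lies in rerunning Case D (Proposition \ref{prop:first-moments}) with this additional smooth weight. However, the weight $\log p \cdot \widehat{h}(\log p/\log c(f))$ is smooth, bounded, and compactly supported in $\log p/\log c(f)$, so after a smooth dyadic decomposition of the extra prime it can be absorbed into the plateau function $W$ appearing in the Kuznetsov-based analysis of Proposition \ref{prop:bcl} without altering the rapid decay of the two-variable Fourier transform $\widehat{H}$. The spectral bounds therefore carry through unchanged, completing the reduction.
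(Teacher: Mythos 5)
Your decomposition of $D(f,h)$, the treatment of the main term via Theorem~\ref{prop:moments}, and the handling of the $O(\log\log c(f)/\log c(f))$ error are all the same as the paper's. Where you diverge is the treatment of the cross term $\tfrac{1}{N(Q)}\sum_q \Psi \sumh_f P(f,x)^\ell P^{(1)}(f,h)$. The paper applies Cauchy--Schwarz once, bounding it by
\begin{equation*}
\Bigl( \tfrac{1}{N(Q)}\sum_q \Psi \sumh_f P(f,x)^{2\ell}\Bigr)^{1/2}\Bigl(\tfrac{1}{N(Q)}\sum_q \Psi \sumh_f P^{(1)}(f,h)^2\Bigr)^{1/2},
\end{equation*}
and then invokes Theorem~\ref{prop:moments} for the first factor and the second-moment estimate of $P^{(1)}(f,h)$ from \cite[Proposition 4.1]{miller} (which gives $O(1/\log Q)=o(1)$) for the second. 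You instead propose to expand $P(f,x)^\ell P^{(1)}(f,h)$ fully and rerun the Case A--D machinery with one extra, differently-weighted prime. That is a legitimate alternative in spirit, but two things are not quite accounted for in your sketch. First, the prime in $P^{(1)}(f,h)$ runs up to $c(f)^\sigma$ (the Fourier support of $\hat h$), which is not the same range as $p\leqslant x$, so the sums you get after expansion are not literally of the shape \eqref{s}. Second, and more substantively, the claimed intermediate bound $O((\log\log x)^{(\ell+1)/2}/\log c(f))$ does not reflect how the weights actually combine: when the $P^{(1)}$ prime is paired with a $P(f,x)$ prime in the ``Case A'' diagonal, the contribution is
\begin{equation*}
\frac{2}{\log c(f)}\sum_{p} \frac{a_f(p)^2}{p}\,\log p\cdot \widehat h\!\left(\frac{\log p}{\log c(f)}\right)=O(1),
\end{equation*}
not $O(\log\log x/\log c(f))$, because the extra $\log p$ exactly cancels the $1/\log c(f)$ normalization. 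So the genuine savings against $(\log\log x)^{\ell/2}$ comes from losing one $\log\log x$ factor (or from parity, when $\ell$ is even), not from an extra $1/\log c(f)$ as you wrote. The conclusion $o((\log\log x)^{\ell/2})$ is still correct, so the argument can be repaired, but as stated the bookkeeping of the extra weight is off, and the paper's Cauchy--Schwarz route avoids this entirely at the cost of citing one external estimate.
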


This proposition means that we can decouple the one-level density statement and the moment property, both exploiting trace formulas. In other words, one-level densities and sums over coefficients are \textit{uncorrelated}.

\begin{proof}
We have to study the sum
\begin{equation}
\frac{1}{N(Q)} \sum_{q \geqslant 1} \Psi\left( \frac{q}{Q} \right) \sumh_{f \in H_k(q)} P(f,x)^\ell D(f,h).
\end{equation}
The one-level density is understood by Proposition \ref{prop:explicit-formula-zeros} and can be written as 
\begin{equation}
 D(f, h) = \hat{h}(0) + \tfrac12 h(0) + P^{(1)}(f,h) + O\left( \frac{\log \log q}{\log q}\right),
\end{equation}
as proven for instance in \cite[(4.25)]{ils}, consequence of the generalized Riemann Hypothesis for $L(s, \mathrm{sym}^2 f)$, and where the implied constant only depends upon the test-function $h$.
Note that the main term of this expression is $\hat{h}(0) + \tfrac12 h(0) = \int h W_{\rm O}$, the limiting one-level density, and can therefore be pulled out of the sum, since independent of $f$ and $q$, and the Theorem~\ref{prop:moments} is therefore applicable as it stands, giving  a contribution of 
\begin{equation}
 (M_\ell + o(1)) (\log\log x )^{\ell/2} \int_{\mathbb{R}} W_{\rm O}h, 
\end{equation}
which already accounts for the main term displayed in Corollary \ref{prop:weighted-moments}.
The error term contributes negligibly to the whole sum over the family. The remaining contribution is 
\begin{equation}
\frac{1}{N(Q)} \sum_{q \geqslant 1} \Psi\left( \frac{q}{Q} \right) \sumh_{f \in H_k(q)} P(f,x)^\ell P^{(1)}(f, h),
\end{equation}
which, by applying the Cauchy-Schwarz inequality, is bounded by
\begin{equation}
    \left( \frac{1}{N(Q)} \sum_{q \geqslant 1} \Psi\left( \frac{q}{Q} \right) \sumh_{f \in H_k(q)} P(f,x)^{2\ell} \right)^{1/2} \left( \frac{1}{N(Q)} \sum_{q \geqslant 1} \Psi\left( \frac{q}{Q} \right) \sumh_{f \in H_k(q)} P^{(1)}(f, h)^2\right)^{1/2}.
\end{equation}
By Theorem \ref{prop:moments}, the first parenthesis is bounded by $(\log\log(x))^\ell$, so that its square root has similar size as the expected main term. The statement \cite[Proposition 4.1]{miller}, where they study the moments of the on-level density, bounds the second parenthesis by $O(1/\log Q) = o(1)$, proving that the whole contribution coming from $P^{(1)}(f, h)$ is negligible, as claimed.
\end{proof}

\section{Proof of Theorem \ref{thm:thm}}
\label{sec:conclusion}

With the above tools being now at hand, we follow the strategy presented in \cite{rs} in the case of quadratic twists of an elliptic curve. We show that there are not many small zeros by an amplification process, which will be used to prove that the sum over zeros in the explicit formula~\eqref{eq:explicit-central-value} contributes as an error term. The moment method will then allow to select the values for which we are in the desired range, giving the result. 

\subsection{Amplification of small zeros}

The following result, analogue of \cite[Lemma 1]{rs}, uses Theorem \ref{prop:moments} to quantify the proportion of $f \in H_k(q)$ such that $P(f,x)$ falls into a specific range; and Corollary \ref{prop:weighted-moments} to jointly quantify the proportion of $f\in H_k(q)$ having not too many small zeros. Introduce the notation $x = X^{1/\log \log \log X}$ for this section.

\begin{prop}
\label{prop:amplification}
The smooth averaged number of $f\in H_k(q)$ such that $P(f,x) / \sqrt{\log \log x} \in (\alpha, \beta)$ and such that there are no zeros with $|\gamma_f| \leqslant (\log X \log\log X)^{-1}$ is at least
\begin{equation}
\label{amplification}
   \frac{5}{8} M(\alpha, \beta) N(Q),
\end{equation}
where $M(\alpha, \beta)$ is the normal distribution, as defined in \eqref{eq:M_alpha_beta}.
\end{prop}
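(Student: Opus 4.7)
The plan is to apply Corollary \ref{coro:moments} to count forms with $P(f,x)$ in the prescribed range, and then to remove those which have a low-lying zero by an amplification argument on the one-level density, in the spirit of \cite[Lemma 1]{rs}. I would first invoke Corollary \ref{coro:moments} with constant weights $b_f = 1$ to obtain that the smooth averaged number of $f \in H_k(q)$ with $P(f,x)/\sqrt{\log\log x} \in (\alpha, \beta)$ equals $(M(\alpha, \beta) + o(1)) N(Q)$. The task then reduces to upper bounding the number of such $f$'s possessing at least one zero $|\gamma_f| \leqslant (\log X \log\log X)^{-1}$ by $\tfrac{3}{8} M(\alpha, \beta) N(Q) + o(N(Q))$.

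The second step is to design a nonnegative even Schwartz function $h$ with $\widehat{h}$ supported in $(-4, 4)$ --- the maximal support afforded by the density result \eqref{thm:ilsbcl} --- normalized so that $h \geqslant 1$ on a neighborhood of $0$. For any $f$ possessing a zero $|\gamma_f| \leqslant (\log X \log\log X)^{-1}$, the corresponding renormalized zero satisfies $|\tilde{\gamma}_f| = |\gamma_f| \log c(f)/(2\pi) = o(1)$ as $Q \to \infty$, so that the nonnegativity of $h$ yields $D(f, h) \geqslant h(\tilde{\gamma}_f) \geqslant 1$. Markov's inequality thus bounds the smooth averaged number of forms in the prescribed $P$-range possessing a small zero by the first moment of $D(f, h)$ restricted to that range.

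To evaluate this first moment, I would follow the polynomial-approximation argument used in the proof of Corollary \ref{coro:moments}: approximate the indicator $\mathbf{1}_{(\alpha, \beta)}$ in $L^1(e^{-x^2/2}\,dx)$ by a polynomial $R$, control the tails through the high even Gaussian moments provided by Theorem \ref{prop:moments}, and apply Corollary \ref{prop:weighted-moments} term-by-term to the resulting polynomial expression. This yields
\begin{equation*}
    \frac{1}{N(Q)} \sum_{q \geqslant 1} \Psi\!\left(\frac{q}{Q}\right) \sumh_{\substack{f \in H_k(q) \\ P(f,x)/\sqrt{\log\log x} \in (\alpha, \beta)}} D(f, h) = \left(M(\alpha, \beta) + o(1)\right) \int_{\mathbb{R}} h\, W_{\rm O}.
\end{equation*}
Combining the three steps, the sought smooth averaged count is bounded below by $M(\alpha, \beta) N(Q)\big(1 - \int h\, W_{\rm O}\big) + o(N(Q))$, and the constant $5/8$ then arises from the optimal choice of the amplifier $h$ as in \cite[Lemma 1]{rs}, now benefitting from the enlarged support $(-4, 4)$ granted by \cite{bcl}.

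The main obstacle lies in the construction of such an optimal amplifier: the conflicting constraints of nonnegativity, the detection condition $h \geqslant 1$ near the origin, and the support restriction on $\widehat{h}$ dictate the precise value of $\int h\, W_{\rm O}$, and it is this extremal value that accounts for the $3/8$ loss compared to the full Keating-Snaith conjecture. The strength of the density theorem \eqref{thm:ilsbcl}, namely the support $(-4, 4)$ rather than the smaller $(-2, 2)$ historically available from \cite{ils}, is decisive for obtaining the stated constant.
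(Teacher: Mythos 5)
Your overall architecture is the right one and matches the paper: invoke Corollary~\ref{coro:moments} with $b_f=1$ to count forms with $P(f,x)/\sqrt{\log\log x}\in(\alpha,\beta)$, invoke Corollary~\ref{prop:weighted-moments} via polynomial approximation of $\mathbf{1}_{(\alpha,\beta)}$ to compute the restricted first moment of $D(f,h)$, choose the scaled Fej\'er kernel $h(x)=(\sin 4\pi x/4\pi x)^2$ with $\widehat{h}$ supported in $(-4,4)$, and then run an amplification/Markov argument to bound the bad set.

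However, there is a genuine quantitative gap in your detection step, and it is precisely what prevents you from reaching $5/8$. You only extract $D(f,h)\geqslant h(\tilde\gamma_f)\geqslant 1$ from a single small zero. The paper instead observes that if $\gamma_f$ is a small nonzero ordinate, then its conjugate $\overline{\gamma_f}$ (i.e.\ $-\gamma_f$ when on the critical line, forced by the self-duality of $L(s,f)$ via the functional equation) is also a small ordinate, so that $h(\tilde\gamma_f)+h(\tilde{\overline\gamma}_f)\geqslant 2-\varepsilon$ contributes to $D(f,h)$. This detection weight of $2-\varepsilon$ rather than $1$ halves the Markov bound on the bad set.

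Concretely: with the Fej\'er amplifier normalized so that $h(0)=1$ and $\widehat{h}$ supported in $(-4,4)$, one has $\int h\,W_{\rm O}=\widehat{h}(0)+\tfrac12 h(0)=\tfrac14+\tfrac12=\tfrac34$, not $\tfrac38$ as your final sentence implicitly requires. Your formula $1-\int h\,W_{\rm O}$ therefore yields $\tfrac14$, not $\tfrac58$. The paper's argument gives $1-\tfrac12\int h\,W_{\rm O}=1-\tfrac38=\tfrac58$, the extra factor of $\tfrac12$ coming exactly from the conjugate-zero doubling. No amount of optimizing the amplifier within your constraints ($h\geqslant 0$, $h\geqslant 1$ near $0$, $\mathrm{supp}\,\widehat h\subset(-4,4)$) can push $\int h\,W_{\rm O}$ below $\tfrac34$, since $\int h\,W_{\rm O}\geqslant\widehat{h}(0)+\tfrac12$ and the Fej\'er choice is extremal as shown in \cite{ils}. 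So the missing ingredient is not a better test function but the pairing of each nonzero small ordinate with its mirror image.
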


\begin{proof}
Choose for $h$ the explicit F\'ejer kernel up to the maximal Fourier support~$L=4$ allowed by the low-lying zero result given in Theorem \ref{thm:ilsbcl}, i.e.
\begin{equation}
    h_0(x) := \left( \frac{\sin \pi x}{\pi x}\right)^2 \qquad \hat{h}_0(y) = \max(1-|y|, 0), 
\end{equation}
which has Fourier transform supported in $(-1,1)$,
and $h(x) = h_0(4x)$ so that $\hat{h}(y) = \tfrac14\hat{h}_0(x/4)$ is compactly supported in $(-4, 4)$. Let $H = D(f,h)$ and $\Psi = \Psi(q/Q)$ to lighten notation for the duration of the proof. We get from Corollary \ref{coro:moments}:
\begin{align}
    \sum_{\substack{f \in H_k(q) \\ P(f,x)/\sqrt{\log\log x} \in (\alpha, \beta)}} H\Psi& \sim M(\alpha, \beta)\sum_{f \in H_k(q)} H\Psi,
\end{align}
and, by Corollary \ref{prop:weighted-moments}, we get
\begin{equation}
\sum_{f \in H_k(q)} H\Psi \sim \int_{\mathbb{R}} W_{\rm O}h \sum_{f \in H_k(q)} \Psi = \frac34 \sum_{f \in H_k(q)} \Psi,
\end{equation}
because
\begin{equation}
    \int W_{\rm O}(y) h(y) dy = \frac{3}{4}
\end{equation}
by the explicit choice of $h$ --- see \cite{ils} for the proof of the optimality of this function in such a setting.

We use the similar amplification argument as in \cite{rs} approach. Rewrite the above sum as 
\begin{equation}
  \sum_{(\alpha, \beta)} H \Psi = \sum_{\substack{(\alpha, \beta) \\ \exists}} H\Psi + \sum_{{\substack{(\alpha, \beta) \\ \nexists}}} H\Psi
\end{equation}
using $\ell = (\log X \log\log X)^{-1}$ and the following notation: 
\begin{align}
    \sum_{(\alpha, \beta)} H \Psi & = \sum_{\substack{f \in H_k(q) }} H\Psi \mathbf{1}_{P(f,x)/\sqrt{\log\log x} \in (\alpha, \beta)}, \\
    \sum_{\substack{(\alpha, \beta) \\ \exists}} H\Psi & = \sum_{\substack{f \in H_k(q) \\ \exists \ |\gamma_f| \leqslant \ell}} H\Psi \mathbf{1}_{P(f,x)/\sqrt{\log\log x} \in (\alpha, \beta)}, \\
    \sum_{{\substack{(\alpha, \beta) \\ \nexists}}} H\Psi & = \sum_{\substack{f \in H_k(q) \\ \nexists \ |\gamma_f| \leqslant \ell}} H\Psi \mathbf{1}_{P(f,x)/\sqrt{\log\log x} \in (\alpha, \beta)}.
\end{align}

The weights $ h(\gamma_f)$ are non-negative, since the function $h$ we chose is non-negative. If~$L(s,f)$ has a zero $\gamma_f$ of size at most $\ell$, then $\tilde\gamma_f$ is at most $\log\log(X)^{-1}$, and its conjugate is also a zero of the same size. Choosing a continuous function $h$ such that $h(0) = 1$, when $x$ grows to infinity, both~$h(\tilde{\gamma}_f)$ and $h(\tilde{\overline{\gamma}}_f)$ are at least $1-\varepsilon$, for any given $\varepsilon>0$. In particular $H = D(f, h) \geqslant 2 - \varepsilon$ and we can therefore write
\begin{equation}
      \sum_{(\alpha, \beta)} H \Psi = \sum_{\substack{(\alpha, \beta) \\ \exists}} H\Psi + \sum_{{\substack{(\alpha, \beta) \\ \nexists}}} H\Psi \geqslant (2-\varepsilon) \sum_{\substack{(\alpha, \beta) \\ \exists}} \Psi + \sum_{{\substack{(\alpha, \beta) \\ \nexists}}} H\Psi = (2-\varepsilon)\sum_{(\alpha, \beta)} \Psi + \sum_{{\substack{(\alpha, \beta) \\ \nexists}}} (H-2+\varepsilon)\Psi, 
\end{equation}
so that we get
\begin{equation}
\label{zzz}
 \sum_{(\alpha, \beta)} H\Psi - \sum_{\substack{(\alpha, \beta) \\ \nexists}} (H-2+\varepsilon)\Psi \geqslant (2-\varepsilon)\sum_{(\alpha, \beta)} \Psi.
\end{equation}
On the other hand, the above consequences of the moment method and of the limiting one-level density results allow to estimate the sums over all forms with restrictions on $P(f,x)$. More precisely, Corollary \ref{coro:moments} implies
\begin{align}
  \label{z} \sum_{(\alpha, \beta)} \Psi &\sim M(\alpha, \beta) \sum_{f \in H_k(q)} \Psi
\end{align}
and Corollary \ref{prop:weighted-moments} states that
\begin{align}
    \label{zz} \sum_{(\alpha, \beta)} H \Psi &\sim \frac{3}{4} M(\alpha, \beta) \sum_{f \in H_k(q)} \Psi.
\end{align}
We thus derive from \eqref{zzz} that
\begin{equation}
 \frac34 M(\alpha, \beta)  \sum_{f \in H_k(q)} \Psi - \sum_{\substack{(\alpha, \beta) \\ \nexists}} (H-2+\varepsilon)\Psi \geqslant (2-\varepsilon) M(\alpha, \beta) \sum_{f \in H_k(q)} \Psi.
\end{equation}
Since $0 \leqslant h \leqslant 1$, we get
\begin{equation}
    (2-\varepsilon)\sum_{\substack{(\alpha, \beta) \\ \nexists}} \Psi \geqslant \sum_{\substack{(\alpha, \beta) \\ \nexists}} (2-H-\varepsilon)\Psi \geqslant \frac54 M(\alpha, \beta) + o(1), 
\end{equation}
from where we obtain a lower bound for the smoothed quantity of $f \in H_k(q)$ having zeros of size at most $\ell$, viz.
\begin{equation}
    \sum_{\substack{(\alpha, \beta) \\ \nexists}} \Psi\left( \frac{q}{Q} \right) \geqslant (\tfrac58 - \varepsilon) M(\alpha, \beta) \sum_{f \in H_k(q)} \Psi\left( \frac{q}{Q} \right), 
\end{equation}
for all $\varepsilon>0$, as desired. 
\end{proof}

\begin{rmk}
The constant $5/8$ is exactly the one appearing in Theorem \ref{thm:thm}, and this is where we see that the quality of the results towards the density conjecture, i.e. the width of the allowed Fourier support, conditions the quality of this lower bound. Note that this gives the same value as the method in \cite{ils} to obtain lower bounds for nonvanishing, as anticipated by \cite{rs}.
\end{rmk}

\subsection{Few zeros contributing a lot}

The following result quantifies how rare are the $f \in H_k(q)$ such that the contribution from the sum over zeros in the explicit formula \eqref{eq:explicit-central-value} is large.

\begin{prop}
\label{prop:few-zeros}
For $x \leqslant q \leqslant Q$, the number of $f \in H_k(q)$ such that 
\begin{equation}
\label{few-zeros}
    \sum_{|\gamma_f| \geqslant (\log X \log\log X)^{-1}} \log(1+(\gamma_f \log x)^{-2}) \geqslant (\log\log\log(X))^3
\end{equation}
is asymptotically dominated by $ X/\log \log \log X$.
\end{prop}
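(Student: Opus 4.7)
The plan is to bound the averaged value of the sum in \eqref{few-zeros} using the one-level density of Theorem~\ref{thm:ilsbcl} and then conclude by Markov's inequality.

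Denote the sum on the left of \eqref{few-zeros} by $Z(f)$. In the rescaled variable $\tilde{\gamma}_f = \gamma_f \log c(f)/(2\pi)$ and with $T := \log c(f)/(2\pi\log x) \asymp \log\log\log X$ (using $x = X^{1/\log\log\log X}$ and $\log c(f) \asymp \log X$ in the range $q \leqslant Q$), each summand equals $F(\tilde{\gamma}_f)$ with $F(u) := \log(1 + T^2/u^2)$, and the cut-off $|\gamma_f| \geqslant 1/(\log X\log\log X)$ corresponds to $|\tilde{\gamma}_f| \geqslant 1/(2\pi\log\log X)$. On this range, $F$ is uniformly bounded by $O(T)$ (attained near the cut-off) and decays like $T^2/u^2$ for $|u| \gg T$. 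A direct computation via the substitution $v = u/T$ shows
\[
\int F(u)\, \mathbf{1}_{|u|\geqslant 1/(2\pi\log\log X)}\, du = O(T).
\]

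I would then construct, using classical Beurling--Selberg extremal function theory, a non-negative Schwartz majorant $h$ of the truncated $F$ with $\hat h$ supported in $(-4, 4)$ and $\int h\, W_{\mathrm{O}} \ll T$. Since $h \geqslant 0$, we have $Z(f) \leqslant D(f, h)$ pointwise in $f$, and Theorem~\ref{thm:ilsbcl} applied to $h$ yields
\[
\frac{1}{N(Q)} \sum_{q \geqslant 1} \Psi\Big(\frac{q}{Q}\Big) \sumh_{f \in H_k(q)} Z(f) \;\leqslant\; \frac{1}{N(Q)} \sum_{q \geqslant 1} \Psi\Big(\frac{q}{Q}\Big) \sumh_{f \in H_k(q)} D(f, h) \;\ll\; T = \log\log\log X.
\]
Markov's inequality then bounds the smoothed harmonic-weighted count of $f$ satisfying $Z(f) \geqslant (\log\log\log X)^3$ by $N(Q) \cdot T / T^3 \ll X/(\log\log\log X)^2$, in particular $\ll X/\log\log\log X$, since $N(Q) \asymp X$ by standard dimension formulas.

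The main obstacle is the construction of the majorant $h$ with the sharp bound $\int h\, W_{\mathrm{O}} = O(T)$: one needs a non-negative Schwartz function with Fourier support in the allowed window $(-4, 4)$ that dominates $F$ over a wide range, from the inner cut-off $|u| \asymp 1/\log\log X$ out to the effective support $|u| \asymp T$, and whose tail is controlled so that $\int h$ exceeds $\int F$ only by $O(1)$. The Beurling--Selberg theory provides such a majorant with this near-optimal $L^1$-norm and prescribed exponential type. A secondary technical point is the uniformity of Theorem~\ref{thm:ilsbcl} as the test function $h$ depends mildly on $X$ through the parameter $T$; this follows from the quantitative error terms (polynomial in $Q^{-1}$ and in standard norms of the test function) present in the proofs of \cite{ils} and \cite{bcl}.
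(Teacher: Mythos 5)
Your approach---majorize the sum over zeros $Z(f)$ by a one-level density $D(f,h)$ for a non-negative test function $h$ of $L^1$-mass $O(\log\log\log X)$ with Fourier support in $(-4,4)$, bound its harmonic average via Theorem~\ref{thm:ilsbcl}, and conclude by Markov's inequality---is precisely the mechanism behind Lemma~2 of \cite{rs}, to which the paper defers this proof. The only caveat worth noting is that Beurling--Selberg extremal majorants are entire of exponential type but decay only quadratically rather than being Schwartz, so one should either regularize them or observe that the density statement extends to this slightly larger class of test functions; together with the uniformity of the density error term in $h$ that you already flag, this is routine.
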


\begin{proof}
The same proof as in \cite[Lemma 2]{rs} holds \textit{mutatis mutandis}.
\end{proof}

\subsection{Conclusion}

This closely follows the argument of \cite{rs}, now that all the corresponding estimates have been established. We write it here for the sake of completeness. Recall from Proposition \ref{prop:explicit-formula-central-values}, with $x = c(f)$, that
\begin{equation}
    \log L(\tfrac12,f) = P(f,x) - \tfrac12 \log\log(x) + O\left(\sum_{\gamma_f} \log(1 + (\gamma_f \log x)^{-2})\right).
\end{equation}

By Proposition \ref{prop:amplification}, we may select $f$'s such that $P(f,x)/\sqrt{\log\log X} \in (\alpha, \beta)$ and that there are no small zeros, without loosing at most a proposition of $\tfrac38$ of the whole family, i.e.
\begin{equation}
 \sum_{\substack{f \in H_k(q) \\ P(f,x)/\sqrt{\log\log x} \in (\alpha, \beta) \\ \nexists |\gamma_f| \leqslant (\log X \log\log X)^{-1}}} 1 \geqslant \tfrac58 M(\alpha, \beta) N(Q).
\end{equation}

By Proposition \ref{prop:few-zeros}, we may remove $f$'s such that the sum over zeros larger than $(\log X \log \log X)^{-1}$ contributes more than $(\log\log\log(X))^3$, since they are asymptotically a negligible cardinality, and the other ones do not contribute that much.

The proportion of $f$ such that $P(c(f),f)/\sqrt{\log \log c(f)}$ falls into $(\alpha, \beta)$ is therefore asymptotically larger than $\tfrac58 M(\alpha, \beta)$ as claimed in the theorem, henceforth ending the proof of Theorem \ref{thm:thm}.

\subsection*{Acknowledgements} We are grateful to Maksym Radziwi\l{}\l{} and Kannan Soundararajan for enlightening discussions. We also thank Steven J. Miller and Pico Gilman for further comments.
This work started when D. L. was visiting Kyushu University and ended when A. I. S. was visiting Université de Lille; we thank both institutions for good working environment. We acknowledge support from the Labex CEMPI (ANR-11-LABX-0007-01), the~CNRS (PEPS), and JSPS KAKENHI Grant Number 22K13895.

\end{document}